\newcolumntype{V}{!{\vrule width 2pt}}
\numberwithin{equation}{section}
\def\blue{\textcolor{blue}}
\def\red{\textcolor{red}}
\def\magenta{\textcolor{magenta}}
\def\cyan{\textcolor{cyan}}
\theoremstyle{plain}
\newtheorem{theorem}{Theorem}[section]
\newtheorem{corollary}[theorem]{Corollary}
\newtheorem{proposition}[theorem]{Proposition}
\newtheorem{remark}[theorem]{Remark}
\newtheorem{lemma}[theorem]{Lemma}
\newtheorem{definition}[theorem]{Definition}
\newtheorem{problem}[theorem]{Problem}
\newcommand{\leaf}{\mathsf{leaf}}
\newcommand{\intt}{\mathsf{int}}
\def\deg{\mathsf{deg}}
\def\run{\mathsf{run}}
\def\hrun{\mathsf{hrun}}
\def\vrun{\mathsf{vrun}}
\def\ret{\mathsf{ret}}
\def\bran{\mathsf{bran}}
\def\larm{\mathsf{larm}}
\def\rarm{\mathsf{rarm}}
\def\peak{\mathsf{peak}}
\def\spi{\mathsf{spi}}
\def\lrb{\mathsf{lrb}}
\def\rspi{\mathsf{rspi}}
\def\iest{\mathsf{iest}}
\def\Par{\mathsf{Par}}
\def\Bro{\mathsf{Bro}}
\def\Yle{\mathsf{Yle}}
\def\Lev{\mathsf{Lev}}
\def\Lsw{\mathsf{Lsw}}
\def\rev{\mathsf{rev}}
\def\lsw{\mathsf{lsw}}
\def\rsw{\mathsf{rsw}}
\def\sat{\mathsf{st}}
\def\lc{\mathsf{lc}}
\def\Lc{\mathsf{Lc}}
\def\dlev{\mathsf{dlev}}
\def\lchain{\mathsf{lchain}}
\def\dlsw{\mathsf{dlsw}}
\def\drsw{\mathsf{drsw}}
\def\SS{\mathfrak{S}}
\def\D{\mathcal{D}}
\def\B{\mathcal{B}}
\def\JR{\mathcal{J}}
\def\P{\mathcal{P}}
\def\omi{\mathsf{omi}}
\def\top{\mathsf{top}}
\def\rpop{\mathsf{rpop}}
\def\lev{\mathsf{lev}}
\def\lsw{\mathsf{lsw}}
\def\IR{\mathsf{IR}}
\def\IL{\mathsf{IL}}
\def\is{\mathsf{is}}
\def\ds{\mathsf{ds}}
\def\N{ \mathbb{N}}
\newcommand{\overbar}[1]{\,\,\mkern -1.5mu\overline{\mkern -4mu#1\mkern-1.5mu}\mkern 1.5mu}
\begin{document}

\title[Two involutions on binary trees]{Two involutions on binary trees and generalizations}


\author[Y. Li]{Yang Li}
\address[Yang Li]{Research Center for Mathematics and Interdisciplinary Sciences, Shandong University \& Frontiers Science Center for Nonlinear Expectations, Ministry of Education, Qingdao 266237, P.R. China}
\email{202117113@mail.sdu.edu.cn}

\author[Z. Lin]{Zhicong Lin}
\address[Zhicong Lin]{Research Center for Mathematics and Interdisciplinary Sciences, Shandong University \& Frontiers Science Center for Nonlinear Expectations, Ministry of Education, Qingdao 266237, P.R. China}
\email{linz@sdu.edu.cn}


\author[T. Zhao]{Tongyuan Zhao}
\address[Tongyuan Zhao]{College of Science, China University of Petroleum, 102249 Beijing, P.R. China}
\email{zhaotongyuan@cup.edu.cn}

\date{\today}

\begin{abstract}
This paper investigates two involutions on binary trees. One is the mirror symmetry of binary trees which combined  with the classical bijection $\varphi$ between  binary trees and plane trees answers an open problem posed by Bai and Chen. This involution can be generalized   to weakly increasing trees, which admits to merge two recent equidistributions found by Bai--Chen and Chen--Fu, respectively. The other one is constructed to answer a bijective problem on di-sk trees asked by Fu--Lin--Wang and can be generalized naturally to rooted labeled trees. This second involution combined with $\varphi$  leads to a new statistic on plane trees whose distribution  gives the Catalan's triangle. Moreover, a quadruple equidistribution on plane trees involving this new statistic is proved via a recursive bijection. 
\end{abstract}


\keywords{Binary trees; Plane trees; Involutions; Bijections; Weakly increasing trees; Rooted labeled trees}

\maketitle


\section{Introduction}\label{sec1: intro}

Plane trees and binary trees are two of the most fundamental objects in the garden of Catalan numbers~\cite[Chapter~1.5]{St1}. The main objective of this paper is to study two involutions on binary trees (or plane trees), their applications and generalizations. One is the mirror symmetry of binary trees and the other one is newly constructed, which was motivated by a bijective problem on di-sk trees.

Recall that a {\em plane tree} is a rooted tree in which the children of each node are linearly ordered. 
A node without any child is called a {\em leaf}, and an {\em internal node} otherwise. 
For a node $v$ of a plane tree $T$, the {\em level of $v$} is the distance between $v$ and the root of $T$, i.e., the number of edges in the unique path from $v$ to the root. Let $\alpha(j_1,j_2,\ldots,j_h)$ be the number of plane trees having $j_i$ nodes in level $i$ for $1\leq i\leq h$. This number has the following neat binomial expression (see Flajolet~\cite{Fla} for a  continued fraction proof):  
$$
\alpha(j_1,j_2,\ldots,j_h)={j_1+j_2-1\choose j_1-1}{j_2+j_3-1\choose j_2-1}\cdots{j_{h-1}+j_h-1\choose j_{h-1}-1}.
$$

In the study of  decompositions of 132-avoiding permutations, Bai and Chen~\cite{BC} introduced a statistic, called the right spanning widths of nodes, on plane trees that is equally distributed as the levels of nodes. 
The {\em right spanning width of a node $v$} in a plane tree $T$, denoted $\rsw_T(v)$, is the number of children of $v$ plus the number of edges attached to other nodes (than $v$) on the path from $v$ to the root  from the right-hand side (of the path); see Fig.~\ref{RSW} for an illustration.
\begin{figure}
\centering
\begin{tikzpicture}[scale=0.3]
\draw[-](0,9) to (-3,6);\draw[very thick,blue](0,9) to (0,6);\draw[very thick,blue](0,9) to (3,6);
\draw[-](-3,6) to (-6,3);\draw[very thick,blue](-3,6) to (0,3);\draw[-](0,3) to (0,0);
\draw[-](-3,6) to (-3,3);\draw[-](-3,0) to (-3,3);
\draw[very thick,blue](-3,0) to (-5,-3);\draw[very thick,blue](-3,0) to (-1,-3);
\node at  (0,9){$\bullet$};\node at  (-3,6){$\bullet$};\node at  (0,6){$\bullet$};\node at  (3,6){$\bullet$};
\node at  (-6,3){$\bullet$};\node at  (0,3){$\bullet$};\node at  (-3,3){$\bullet$};
\node at  (-3,0){$\bullet$};\node at  (-5,-3){$\bullet$};\node at  (-1,-3){$\bullet$};
\node at  (0,0){$\bullet$};\node at  (-3.5,0.5){$v$};
\end{tikzpicture}
\caption{The right spanning width of $v$ is $5$, which counts the blue bold  edges.\label{RSW}}
\end{figure}
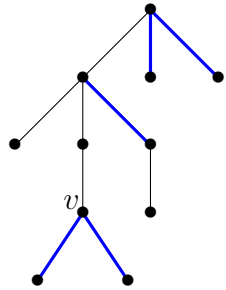
 Their result can be stated as below, which  refines the well-known fact that leaves and internal nodes are equally distributed on plane trees.

\begin{theorem}[Bai and Chen]\label{H:RSW} Fix integers $n,k\geq1$ and a multiset $M$. There are equally many plane trees with $n$ edges and $k$ leaves whose levels constitute $M$ and plane trees with $n$ edges and $k$ internal nodes whose right spanning widths constitute $M$.
\end{theorem}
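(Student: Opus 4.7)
The plan is to build an explicit involution on plane trees with $n$ edges exchanging the multiset of leaf levels with the multiset of right spanning widths of internal nodes. The involution factors through binary trees as $T\mapsto\varphi^{-1}(\mu(\varphi(T)))$, where $\varphi$ is a suitable variant of the classical bijection between plane trees and binary trees, and $\mu$ is the mirror involution on binary trees (swap left and right children recursively).

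The right variant of $\varphi$ is: for each non-root node $v$ of $T$, the left child of $v$ in $B:=\varphi(T)$ is the \emph{last} child of $v$ in $T$ (if any), and the right child of $v$ in $B$ is the \emph{previous} sibling of $v$ in $T$ (if any); the root of $B$ is the last child of the root of $T$, so $\varphi$ is a bijection from plane trees with $n$ edges to binary trees with $n$ nodes. Two immediate facts follow: (i) leaves of $T$ correspond to nodes of $B$ with no left child, and (ii) the level of a leaf $v$ in $T$ equals $1$ plus the number of left edges on the path from the root of $B$ to $v$ in $B$, since along this path only the left edges descend a level in $T$.

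Applying $\mu$ exchanges no-left-child with no-right-child, and left-edge counts with right-edge counts on root paths. Setting $T':=\varphi^{-1}(\mu(B))$ gives an involution $T\leftrightarrow T'$ on plane trees. The no-right-child nodes of $\mu(B)$ correspond under $\varphi^{-1}$ to the leftmost children of nodes of $T'$ (including the leftmost child of the root), which are in natural bijection with the internal nodes of $T'$ via the parent map. This pairs each leaf $v$ of $T$ with a unique internal node $u'$ of $T'$.

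The main computation is to verify $\lev_T(v)=\rsw_{T'}(u')$. Parameterize the path in $T'$ from the root to the image $v'$ of $v$ by $p'_0,p'_1,\dots,p'_{h'}=v'$, where $p'_i$ is the $A'_i$-th child of $p'_{i-1}$ and has $C'_i$ children in $T'$. Fact (ii) applied inside $\mu(B)$ gives
\[
\sum_{i=0}^{h'-1}\bigl(C'_i-A'_{i+1}\bigr)=\lev_T(v)-1.
\]
Since $v'$ is the leftmost child of $u'=p'_{h'-1}$ in $T'$, we have $A'_{h'}=1$, and isolating the last summand yields
\[
\lev_T(v)=C'_{h'-1}+\sum_{i=0}^{h'-2}\bigl(C'_i-A'_{i+1}\bigr)=\rsw_{T'}(u'),
\]
the right-hand equality being the Bai--Chen definition of the right spanning width (children of $u'$ plus right siblings of each non-root ancestor of $u'$). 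The delicate step is the choice of variant of $\varphi$: with the more common first-child/next-sibling convention, the no-right-child nodes of $\mu(B)$ pair with the \emph{rightmost} children of $T'$ (forcing $A'_{h'}=C'_{h'-1}$), after which the rearrangement above no longer simplifies to $\rsw_{T'}(u')$ and the naive version of the bijection breaks.
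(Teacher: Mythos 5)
Your overall strategy---conjugating the mirror involution on binary trees through a plane-tree/binary-tree bijection---is exactly the paper's. Your ``last child / previous sibling'' convention for $\varphi$ differs from the paper's ``leftmost child / closest elder sibling'' convention only by a mirror of the plane tree, which is why you land on $\rsw$ directly while the paper lands on the left spanning width $\lsw$ and then invokes the $\lsw$/$\rsw$ symmetry of plane trees. Your computation showing $\lev_T(v)=\rsw_{T'}(u')$ for each \emph{leaf} $v$ of $T$ and its partner internal node $u'$ of $T'$ is correct, as is the observation that leaves of $T$ biject with internal nodes of $T'$.

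The gap is in what the multiset $M$ ranges over. In the theorem as the paper uses it, $M$ is the multiset of levels (resp.\ right spanning widths) of \emph{all} $n+1$ nodes of the tree, not just of the $k$ leaves (resp.\ $k$ internal nodes): this is the statistic $\Lev$ (resp.\ $\Lsw$) appearing in Theorem~\ref{thm:lswit}, which is stated to contain Theorem~\ref{H:RSW}, and it matches the level-sequence enumeration $\alpha(j_1,\ldots,j_h)$ that opens the introduction. Your pairing covers only the $k$ leaves of $T$; you still owe a matching of the $n+1-k$ internal nodes of $T$ (the root included, with $\lev_T(\mathrm{root})=0$) to the leaves of $T'$ under which $\lev$ again transforms into $\rsw$, and these two statements do not follow from one another. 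The paper's device for handling all nodes simultaneously is the complement $\overbar{B}$ of the binary tree: adjoin the missing children of every node of $B$ and label them $0',1',\ldots,n'$ so that \emph{every} node $v$ of $T$ acquires an avatar $v'$ in $\overbar{B}$ whose root-path left- and right-edge counts read off $\lev_T(v)$ and $\lsw_T(v)$ at once (Lemma~\ref{lev:lsw}); the mirror then permutes these avatars and yields the full correspondence $v\leadsto u$ on all $n+1$ nodes (Theorem~\ref{thm:lsw}). Your argument would be complete if you extended your pairing in this way (or proved the analogous identity for internal nodes directly); as written it establishes only the weaker statement about leaf levels versus internal-node right spanning widths.
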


This equidistribution was proved by combining two bijections, one due to Jani and Rieper~\cite{JR} and another one was newly constructed in~\cite{BC}, between plane trees and 132-avoiding permutations. Recall that a {\em binary tree} is a special type of rooted tree in which every internal node has either one left child  or one right child  or both.  The first purpose of this paper is to show that combining the mirror symmetry on binary trees with the classical bijection between plane trees and binary trees leads to an involution proof of Theorem~\ref{H:RSW}, which can be generalized to the  weakly increasing trees introduced in~\cite{Lin20}. The generalization of this simple involution will also reproves a quintuple equidistribution on weakly increasing trees found by Chen and Fu~\cite{CFu}. 

 The second purpose of this paper is to construct an intriguing involution on binary trees, which was inspired by a bijective problem posed by Fu, Lin and Wang~\cite{FLW} regarding a symmetry on di-sk trees. A {\em di-sk tree} is a binary tree whose nodes are labeled by $\oplus$ or $\ominus$, and no node has the same label as its right child (this requirement is called the {\em right chain condition}). For a binary tree or a di-sk tree $T$, we consider two different orders to traverse the nodes of $T$:
 \begin{itemize}
 \item the {\em preorder}, i.e., recursively traversing the parent to the left subtree then to the right subtree;
 \item the {\em reverse preorder}, i.e., recursively traversing the right subtree  to the left subtree then to the parent. 
 \end{itemize}
 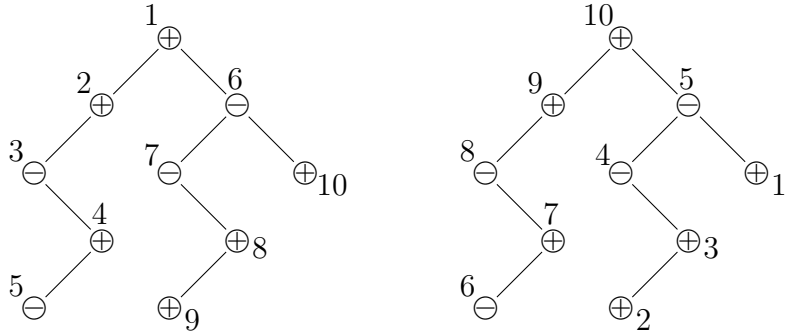
\begin{figure}
\begin{tikzpicture}[scale=0.3]
\draw[-] (17,10) to (19,12);
\draw[-] (20,12) to (22,10);
\draw[-] (22,9) to (20,7);
\draw[-] (23,9) to (25,7);
\draw[-] (20,6) to (22,4);
\draw[-] (22,3) to (20,1);

\node at (19.5,0.5) {$\oplus$};
\node at (20.5,0) {$9$};
\draw[-] (16,9) to (14,7);
\node at (13.5,6.5) {$\ominus$};
\node at (12.7,7.5) {$3$};

\draw[-] (14,6) to (16,4);
\node at (16.5,3.5) {$\oplus$};
\node at (16.4,4.7) {$4$};

\draw[-] (16,3) to (14,1);
\node at (13.5,0.5) {$\ominus$};
\node at (12.7,1.5) {$5$};
\node at (16.5,9.5) {$\oplus$};
\node at (15.7,10.5) {$2$};
\node at (19.5,12.5) {$\oplus$};
\node at (18.7,13.5) {$1$};
\node at (22.5,9.5) {$\ominus$};
\node at (22.4,10.7) {$6$};
\node at (19.5,6.5) {$\ominus$};
\node at (18.7,7.3) {$7$};
\node at (25.5,6.5) {$\oplus$};
\node at (26.7,6) {$10$};
\node at (22.5,3.5) {$\oplus$};
\node at (23.5,3.2) {$8$};

\draw[-] (37,10) to (39,12);
\draw[-] (40,12) to (42,10);
\draw[-] (42,9) to (40,7);
\draw[-] (43,9) to (45,7);
\draw[-] (40,6) to (42,4);
\draw[-] (42,3) to (40,1);

\node at (39.5,0.5) {$\oplus$};
\node at (40.5,0) {$2$};
\draw[-] (36,9) to (34,7);
\node at (33.5,6.5) {$\ominus$};
\node at (32.7,7.5) {$8$};

\draw[-] (34,6) to (36,4);
\node at (36.5,3.5) {$\oplus$};
\node at (36.4,4.7) {$7$};

\draw[-] (36,3) to (34,1);
\node at (33.5,0.5) {$\ominus$};
\node at (32.7,1.5) {$6$};
\node at (36.5,9.5) {$\oplus$};
\node at (35.7,10.5) {$9$};
\node at (39.5,12.5) {$\oplus$};
\node at (38.5,13.5) {$10$};
\node at (42.5,9.5) {$\ominus$};
\node at (42.4,10.7) {$5$};
\node at (39.5,6.5) {$\ominus$};
\node at (38.7,7.3) {$4$};
\node at (45.5,6.5) {$\oplus$};
\node at (46.5,6) {$1$};
\node at (42.5,3.5) {$\oplus$};
\node at (43.5,3.2) {$3$};

\end{tikzpicture}
\caption{The preorder (on left) and the reverse preorder (on right) of a di-sk tree.\label{preorder:disk}}
\end{figure}
See Fig.~\ref{preorder:disk} for the preorder and the reverse preorder of a di-sk tree.  Following~\cite{FLW}, denote by $\top(T)$ (resp.~$\rpop(T)$)  the number of initial $\oplus$-nodes under the preorder (resp.~reverse preorder) in $T$ and let $\omi(T)$ be the number of $\ominus$-nodes in $T$. For example, if $T$ is the di-sk tree in Fig.~\ref{preorder:disk}, then $\top(T)=2, \rpop(T)=3$ and $\omi(T)=4$. Note that di-sk trees are in natural bijection with separable permutations, under which the statistics `$\omi$' and `$\top$' in trees are corresponding respectively to the statistics of descents and components~\cite{FLW} in permutations.   Using generating functions, Fu, Lin and Wang~\cite{FLW} proved the following symmetry 
\begin{equation}\label{top:rpop}
\sum_{T\in\mathfrak{DT}_n}t^{\omi(T)}x^{\top(T)}y^{\rpop(T)}=\sum_{T\in\mathfrak{DT}_n}t^{\omi(T)}x^{\rpop(T)}y^{\top(T)},
\end{equation}
where $\mathfrak{DT}_n$ denotes the set of di-sk trees with $n$ nodes. They also proved several other symmetric distributions on di-sk trees, but the symmetry in~\eqref{top:rpop} is the only one that lacks a combinatorial proof. Thus, they posed the following open problem at the end of the paper. 
\begin{problem}[Fu, Lin and Wang]\label{prob:bij}
Can one find a  bijective proof of the symmetry on di-sk trees in~\eqref{top:rpop}? 
\end{problem}

The rest of this paper is organized as follows. In Section~\ref{sec:2}, we investigate the mirror symmetry of binary trees, which leads to a joint generalization of two equidistributions found recently by Bai--Chen and Chen--Fu, respectively. We also  answer affirmatively an open problem posed by Bai--Chen  and introduce four new tree statistics analogous to $\rsw$, having the same distribution as levels on plane trees. In Section~\ref{sec:3}, we construct a new involution on binary trees, which can be generalized to di-sk trees and rooted labeled trees.  This involution answers Problem~\ref{prob:bij} and leads to a new statistic whose distribution over  plane trees gives the Catalan's triangle. Moreover, a quadruple equidistribution on plane trees involving this new statistic is proved via a recursive bijection.

\section{The mirror symmetry of binary trees and applications}\label{sec:2}
Let $\P_n$ be the set of all plane trees with $n$ edges. 
For the sake of convenience, we will label the nodes of a plane tree in $\P_n$ by using $\{0,1,\ldots,n\}$ in order to distinguish them and we assume that the root is labeled by $0$ and the leftmost child of the root is labeled by $1$. In a plane tree, nodes with the same parent are called {\em siblings} and the siblings to the left (resp.~right) of a node $v$ are called {\em elder} (resp.~{\em younger}) {\em siblings} of $v$.

 Let $\B_n$ be the set of all binary trees with $n$ nodes. For a tree $T\in\P_n$, we define the binary tree $\varphi(T)\in\B_n$ by requiring that for each pair of non-root  nodes $(x,y)$ in $T$: 
 \begin{enumerate}[(i)]
 \item $y$ is the left child of $x$ in $\varphi(T)$ only if  when  $y$ is the leftmost child of  $x$ in $T$;
 \item  $y$ is the right child of $x$ in $\varphi(T)$ only if when $x$ is the closest elder sibling of $y$ in $T$.
 \end{enumerate}
The mapping $T\mapsto\varphi(T)$ establishes a natural one-to-one correspondence between $\P_n$ and $\B_n$, which is well known~\cite[Page~9]{St1}.   See the first step in Fig.~\ref{invo1} for an example of this bijection $\varphi$. 
For a binary tree $B\in\B_n$, let $\phi(B)$ be the  mirror symmetry of $B$. Clearly, the map $B\mapsto\phi(B)$ is an involution on $\B_n$.  Now introduce the involution $\tilde\phi$ on $\P_n$ by $\tilde\phi=\varphi^{-1}\circ\phi\circ\varphi$. See  Fig.~\ref{invo1} for an example of the involution $\tilde\phi$.

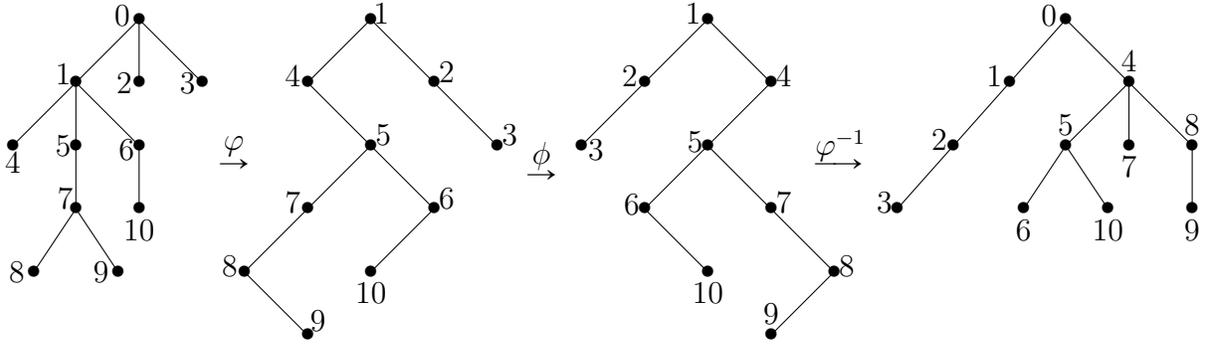
\begin{figure}
\centering
\begin{tikzpicture}[scale=0.28]
\draw[-](0,9) to (-3,6);\draw[-](0,9) to (0,6);\draw[-](0,9) to (3,6);
\draw[-](-3,6) to (-6,3);\draw[-](-3,6) to (0,3);\draw[-](0,3) to (0,0);
\draw[-](-3,6) to (-3,3);\draw[-](-3,0) to (-3,3);
\draw[-](-3,0) to (-5,-3);\draw[-](-3,0) to (-1,-3);
\node at  (0,9){$\bullet$};\node at  (-0.8,9.2){$0$};
\node at  (-3,6){$\bullet$};\node at  (-3.6,6.4){$1$};
\node at  (0,6){$\bullet$};\node at  (-0.7,6){$2$};
\node at  (3,6){$\bullet$};\node at  (2.3,6){$3$};
\node at  (-6,3){$\bullet$};\node at  (-6,2.2){$4$};
\node at  (0,3){$\bullet$};\node at  (-0.6,2.8){$6$};
\node at  (-3,3){$\bullet$};\node at  (-3.6,3){$5$};
\node at  (-3,0){$\bullet$};
\node at  (-5,-3){$\bullet$};\node at  (-5.8,-3){$8$};
\node at  (-1,-3){$\bullet$};\node at  (-1.8,-3){$9$};
\node at  (0,0){$\bullet$};\node at  (0,-1){$10$};
\node at  (-3.5,0.5){$7$};

\node at  (4.5,2){$\rightarrow$};\node at  (4.5,3){$\varphi$};

\draw[-](11,9) to (8,6);
\draw[-](11,9) to (17,3);\draw[-](8,6) to (14,0);
\draw[-](11,-3) to (14,0);\draw[-](11,3) to (5,-3);\draw[-](8,-6) to (5,-3);
\node at  (11,9){$\bullet$};\node at  (11.5,9.3){$1$};
\node at  (8,6){$\bullet$};\node at  (7.3,6.3){$4$};
\node at  (14,6){$\bullet$};\node at  (14.6,6.5){$2$};
\node at  (17,3){$\bullet$};\node at  (17.6,3.5){$3$};
\node at  (11,3){$\bullet$};\node at  (11.6,3.5){$5$};
\node at  (14,0){$\bullet$};\node at  (14.6,0.5){$6$};
\node at  (11,-3){$\bullet$};\node at  (11,-4){$10$};
\node at  (8,0){$\bullet$};\node at  (7.3,0.3){$7$};
\node at  (5,-3){$\bullet$};\node at  (4.3,-2.7){$8$};
\node at  (8,-6){$\bullet$};\node at  (8.5,-5.3){$9$};

\node at  (19.1,1.5){$\rightarrow$};\node at  (19.1,2.5){$\phi$};

\draw[-](27,9) to (21,3);
\draw[-](27,9) to (30,6);\draw[-](24,0) to (30,6);
\draw[-](24,0) to (27,-3);\draw[-](27,3) to (33,-3);\draw[-](30,-6) to (33,-3);
\node at  (30,0){$\bullet$};\node at  (30.6,0.3){$7$};
\node at  (33,-3){$\bullet$};\node at  (33.6,-2.7){$8$};
\node at  (30,-6){$\bullet$};\node at  (30,-5){$9$};
\node at  (27,9){$\bullet$};\node at  (26.3,9.3){$1$};

\node at  (24,6){$\bullet$};\node at  (23.3,6.3){$2$};
\node at  (21,3){$\bullet$};\node at  (21.7,2.8){$3$};
\node at  (30,6){$\bullet$};\node at  (30.6,6.3){$4$};
\node at  (27,3){$\bullet$};\node at  (26.4,3.2){$5$};
\node at  (24,0){$\bullet$};\node at  (23.4,0.2){$6$};
\node at  (27,-3){$\bullet$};\node at  (27,-4){$10$};

\node at  (33.2,2){$\longrightarrow$};\node at  (33.3,3){$\varphi^{-1}$};

\node at  (44,9){$\bullet$};\node at  (43.2,9.2){$0$};
\draw[-](44,9) to (36,0);\draw[-](44,9) to (47,6);\draw[-](47,6) to (44,3);
\node at  (41.33,6){$\bullet$};\node at  (40.6,6.3){$1$};
\node at  (38.66,3){$\bullet$};\node at  (38,3.3){$2$};
\node at  (36,0){$\bullet$};\node at  (35.4,0.3){$3$};
\draw[-](47,6) to (50,3);\draw[-](50,3) to (50,0);\draw[-](47,6) to (47,3);
\node at  (47,6){$\bullet$};\node at  (47,7){$4$};
\node at  (50,3){$\bullet$};\node at  (50,4){$8$};
\node at  (50,0){$\bullet$};\node at  (50,-1){$9$};
\node at  (47,3){$\bullet$};\node at  (47,2){$7$};
\node at  (44,3){$\bullet$};\node at  (44,4){$5$};
\node at  (42,0){$\bullet$};\node at  (42,-1){$6$};
\node at  (46,0){$\bullet$};\node at  (46,-1){$10$};
\draw[-](44,3) to (42,0);\draw[-](44,3) to (46,0);
\end{tikzpicture}
\caption{An example of the involution $\tilde\phi=\varphi^{-1}\circ\phi\circ\varphi$.\label{invo1}}
\end{figure}

For a node $v$ of a plane tree $T\in\P_n$, denote by $\lev_T(v)$ the level of $v$ in $T$ and by $\lsw_T(v)$ the {\em left spanning width\footnote{The notion of left spanning widths was also noticed by Bai and Chen in their revised version of~\cite{BC}.} of $v$}, which is the number of children of $v$ plus the number of edges attached to other nodes (than $v$) on the path from $v$ to the root  from the left-hand side. As long as there is no danger of confusion, we can remove the subscript $T$ from $\lev_T(v)$ and $\lsw_T(v)$.  Note that the distributions of left spanning widths and the right spanning widths are symmetric over plane trees, but are different in general  over  weakly increasing trees introduced in Definition~\ref{def:wit}.

Let $T\in\P_n$ and let $B=\varphi(T)\in\B_n$. 
We supply nodes to $B$ to get a complete binary tree $\overbar{B}$ such that each node in $B$ has exactly two children. We label the supplied nodes in  $\overbar{B}$ by using $0',1',\ldots,n'$ according to the following rule:
\begin{itemize}
\item if the supplied node is a left child of a node $v$ in $B$, then it is labeled by $v'$;
\item if the supplied node is a right child of a node $v$ in $B$, then it is labeled by $u'$, where $u$ is the father of $v$ in $T$. 
\end{itemize}
Then, such a $\overbar B$ is called the {\em complement of $B$}. For instance, if $T$ and $B$ are respectively the first two trees in Fig.~\ref{invo1}, then $\overbar B$ is the tree drawn in Fig.~\ref{bin:comp}. For any node $v$ of $T$, there is a unique node $u$ in $T$ such that  either $v'$ is the right child of $u$ in $\overbar B$ or $u'$ is the right child of $v$ in $\overbar{\phi(B)}$. We write $v\leadsto u$ for the relation of such two nodes. For the nodes of the first plane tree in Fig.~\ref{invo1}, we have 
\begin{align*}
0\leadsto 3, 1\leadsto 6, 2\leadsto1,3\leadsto2, 4\leadsto0, 5\leadsto7, 6\leadsto10,7\leadsto9, 8\leadsto4, 9\leadsto8, 10\leadsto5.
\end{align*}
It is possible to describe the relation $v\leadsto u$ directly using $T$; see Lemma~\ref{dirc}. 
The reason to create the complements of binary trees lies in the following key observation. 

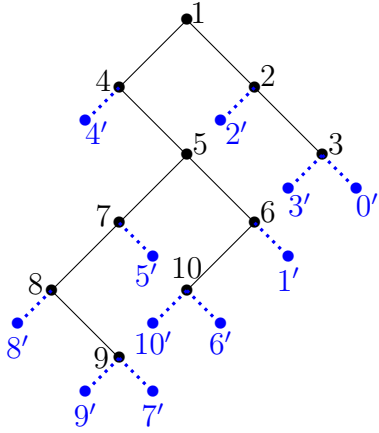
\begin{figure}
\centering
\begin{tikzpicture}[scale=0.3]

\draw[-](11,9) to (8,6);
\draw[-](11,9) to (17,3);\draw[-](8,6) to (14,0);
\draw[-](11,-3) to (14,0);\draw[-](11,3) to (5,-3);\draw[-](8,-6) to (5,-3);
\node at  (11,9){$\bullet$};\node at  (11.5,9.3){$1$};
\node at  (8,6){$\bullet$};\node at  (7.3,6.3){$4$};
\draw[very thick,dotted,blue](8,6) to (6.5,4.5);\node at  (6.5,4.5){\blue{$\bullet$}};
\node at  (7,4){\blue{$4'$}};
\node at  (14,6){$\bullet$};\node at  (14.6,6.5){$2$};\draw[very thick,dotted,blue](14,6) to (12.7,4.7);
\node at  (13.2,4){\blue{$2'$}};\node at  (12.5,4.5){\blue{$\bullet$}};
\node at  (17,3){$\bullet$};\node at  (17.6,3.5){$3$};\draw[very thick,dotted,blue](17,3) to (15.5,1.5);
\node at  (15.5,1.5){\blue{$\bullet$}};\node at  (16,0.8){\blue{$3'$}};
\draw[very thick,dotted,blue](17,3) to (18.5,1.5);\node at  (18.5,1.5){\blue{$\bullet$}};\node at  (19,0.8){\blue{$0'$}};
\node at  (11,3){$\bullet$};\node at  (11.6,3.5){$5$};
\node at  (14,0){$\bullet$};\node at  (14.6,0.5){$6$};\draw[very thick,dotted,blue](14,0) to (15.5,-1.5);
\node at  (15.5,-1.5){\blue{$\bullet$}};\node at  (15.5,-2.5){\blue{$1'$}};
\node at  (11,-3){$\bullet$};\node at  (11,-2){$10$};\draw[very thick,dotted,blue](11,-3) to (9.5,-4.5);
\draw[very thick,dotted,blue](11,-3) to (12.5,-4.5);
\node at  (12.5,-4.5){\blue{$\bullet$}};\node at  (12.5,-5.3){\blue{$6'$}};
\node at  (9.5,-4.5){\blue{$\bullet$}};\node at  (9.5,-5.3){\blue{$10'$}};

\node at  (8,0){$\bullet$};\node at  (7.3,0.3){$7$};\draw[very thick,dotted,blue](8,0) to (9.5,-1.5);
\node at  (9.5,-1.5){\blue{$\bullet$}};\node at  (9.2,-2.3){\blue{$5'$}};
\node at  (5,-3){$\bullet$};\node at  (4.3,-2.7){$8$};\draw[very thick,dotted,blue](5,-3) to (3.5,-4.5);
\node at  (3.5,-4.5){\blue{$\bullet$}};\node at  (3.5,-5.5){\blue{$8'$}};
\node at  (8,-6){$\bullet$};\node at  (7.2,-6){$9$};\draw[very thick,dotted,blue](8,-6) to (6.5,-7.5);
\draw[very thick,dotted,blue](8,-6) to (9.5,-7.5);
\node at  (6.5,-7.5){\blue{$\bullet$}};\node at  (6.5,-8.5){\blue{$9'$}};
\node at  (9.5,-7.5){\blue{$\bullet$}};\node at  (9.5,-8.5){\blue{$7'$}};

\end{tikzpicture}
\caption{The complement of a binary tree.\label{bin:comp}}
\end{figure}

\begin{lemma}\label{lev:lsw}
Fix $T\in\P_n$ and let $B=\varphi(T)\in\B_n$. For any node $v$ of $T$, $\lev(v)$ (resp.~$\lsw(v)$) equals the number of left (resp.~right) edges in the path from the root $1$ to $v'$ in $\overbar B$. 
\end{lemma}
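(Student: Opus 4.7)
The plan is to walk down the unique path from $1$ to $v'$ inside $\overbar{B}$ and read off the two edge counts directly, matching them against $\lev(v)$ and $\lsw(v)$. The first step is to translate the defining rules of $\varphi$ into a dictionary on edges: in $B$ a left edge from $x$ down to $y$ witnesses that $y$ is the leftmost child of $x$ in $T$, and a right edge from $x$ down to $y$ witnesses that $y$ is the next younger sibling of $x$ in $T$. In other words, descending a left edge moves us one level down in $T$, while descending a right edge keeps us at the same level and slides one step right through the siblings.

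Next, I would write the ancestors of $v$ in $T$ as $v_0=0,\,v_1=1,\,v_2,\ldots,v_d=v$ with $d=\lev(v)$, and let $k_i$ be the position of $v_i$ among the children of $v_{i-1}$, counted from the left. By the dictionary above, for each $i=2,\ldots,d$ the segment of the $B$-path from $v_{i-1}$ to $v_i$ consists of exactly one left edge (landing at the leftmost child of $v_{i-1}$) followed by $k_i-1$ right edges (stepping through the elder siblings of $v_i$). Hence the portion from $1=v_1$ to $v=v_d$ inside $B$ uses $d-1$ left edges and $\sum_{i=2}^{d}(k_i-1)$ right edges.

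Finally, I would append the extra edge(s) from $v$ to $v'$ inside $\overbar{B}$ according to the supplied-node rule. If $v$ is a leaf of $T$, then $v$ has no left child in $B$, so $v'$ is attached to $v$ as a supplied left child and we add one left edge and zero right edges. If instead $v$ is internal in $T$ with $c$ children and rightmost child $u$, then $v'$ is a supplied right child of $u$; the detour from $v$ to $v'$ is then one left edge (to the leftmost child of $v$), $c-1$ right edges (through the siblings to $u$), and one more right edge (to $v'$), which adds one left edge and $c$ right edges. In both cases the total number of left edges from $1$ to $v'$ is $d=\lev(v)$, proving the first assertion. For the second, I would unpack $\lsw(v)$ as the number of children of $v$ in $T$ (which is $0$ in the leaf case and $c$ otherwise) plus, for each strict ancestor $v_{i-1}$ of $v$ with $1\leq i\leq d$, the number of elder siblings of $v_i$, i.e., $k_i-1$; since $v_1=1$ is by convention the leftmost child of $v_0=0$, the $i=1$ contribution vanishes, so $\lsw(v)$ reduces to (children of $v$)$+\sum_{i=2}^{d}(k_i-1)$, which matches the right-edge count in both cases. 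No substantive difficulty is expected here; the only care required is to keep the leaf/internal dichotomy for $v$ straight when reading the supplied nodes of $\overbar{B}$.
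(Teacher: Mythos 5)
Your proof is the same direct verification the paper intends: the published proof is a one-line appeal to the definitions of $\varphi$ and of the complement $\overbar B$, and your edge dictionary (left edge $=$ descend to leftmost child, right edge $=$ slide to next younger sibling), your location of $v'$ via the leaf/internal dichotomy, and your final tallies are exactly the right way to flesh that out. There is, however, one false assertion in your setup: you declare the level-$1$ ancestor of $v$ to be the node $1$. That holds only when $v$ lies in the subtree of the leftmost child of the root. In general $v_1$ is the $k_1$-th child of $0$, the path in $B$ from the root $1$ to $v_1$ consists of $k_1-1$ right edges (and no left edges), and $\lsw(v)$ likewise picks up the $k_1-1$ elder siblings of $v_1$ as edges leaving $v_0$ on the left of the path. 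So the $i=1$ contribution does not vanish; it adds $k_1-1$ to both the right-edge count and to $\lsw(v)$, and both of your sums should run from $i=1$. Because the two omissions are equal, the identity you are proving is unaffected, but the intermediate quantities as written are wrong whenever $v$ is not a descendant of node $1$. You should also cover the boundary case $v=0$, which the lemma includes but which is not a node of $B$: there $0'$ is the supplied right child of the youngest child of the root, reached from $1$ by $\deg_T(0)$ right edges and no left edges, consistent with $\lev(0)=0$ and $\lsw(0)=\deg_T(0)$. With these two repairs the argument is complete and matches the paper's (unwritten) one.
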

\begin{proof}
This observation follows from  $\varphi$ and the construction of the complement of binary trees. 
\end{proof}

For a node $v$ of a plane tree $T\in\P_n$, let $\deg_T(v)$ be the number of children of $v$  in $T$. We also introduce the {\em dual degree of $v$}, denoted $\widetilde\deg_T(v)$, according to the following three cases:
\begin{itemize}
\item if $v$ is an internal node, then set $\widetilde\deg_T(v)=0$;
\item if $v$ is a leaf in $T$ and all nodes in the path from $v$ to the root of $T$ have no elder siblings, then set $\widetilde\deg_T(v)=\lev_T(v)$;
\item  otherwise, let $\widetilde\deg_T(v)$ be the number of nodes from $v$ to the first node that has elder siblings when walking along the path from $v$ to the root of $T$. 
\end{itemize}
Note that $\widetilde\deg_T(v)$ is positive iff $v$ is a leaf. 
For example, if $T$ is the last tree in  Fig.~\ref{invo1}, then  $\widetilde\deg_T(3)=\widetilde\deg_T(6)=3, \widetilde\deg_T(10)=\widetilde\deg_T(7)=1$ and $\widetilde\deg_T(9)=2$.
By Lemma~\ref{lev:lsw} and the mirror symmetry of binary trees, we have the following generalization of Theorem~\ref{H:RSW}. 
\begin{theorem}\label{thm:lsw}
Fix $T\in\P_n$. For any node $v$ of $T$, if $v\leadsto u$, then 
$$
\lev_T(v)=\lsw_{\tilde\phi(T)}(u)\quad\text{and}\quad \lsw_T(v)=\lev_{\tilde\phi(T)}(u).
$$
Moreover, $\deg_T(v)=\widetilde\deg_{\tilde\phi(T)}(u)$. 
\end{theorem}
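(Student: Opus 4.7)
The whole statement is controlled by Lemma~\ref{lev:lsw}, which reads $\lev_T(v)$ and $\lsw_T(v)$ as the numbers of left and right edges, respectively, on the path from the root of $\overbar B$ to the supplied node $v'$. My plan is to show that the relation $v\leadsto u$ precisely identifies each supplied position in $\overbar B$ with its mirror-image supplied position in $\overbar{\phi(B)}$, so that the lev/lsw pair of equalities reduces to the obvious fact that $\phi$ swaps left and right edges along every root-to-node path.

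First I would unpack the two clauses in the definition of $v\leadsto u$. By the complement labeling rule together with the observation that a node $w\in B=\varphi(T)$ lacks a right child iff $w$ is the rightmost $T$-child of its $T$-parent, the supplied label $v'$ sits in $\overbar B$ either as the right child of some $u$, which happens exactly when $v$ is internal in $T$ and then $u$ is the rightmost $T$-child of $v$, or as the left child of $v$ itself, which happens exactly when $v$ is a leaf of $T$. Applying the mirror $\phi$ turns $\overbar B$ into a binary tree whose underlying shape is $\phi(B)$; relabeling its supplied nodes according to the complement rule for the pair $(\phi(B),\tilde\phi(T))$, one verifies case by case that the mirror image of the position of $v'$ carries the label $u'$: in the internal case, the supplied right child of $u$ in $\overbar B$ is sent to the supplied left child of $u$ in $\overbar{\phi(B)}$, which by the rule is labeled $u'$; in the leaf case, the supplied left child of $v$ in $\overbar B$ is sent to the supplied right child of $v$ in $\overbar{\phi(B)}$, which is labeled with the $\tilde\phi(T)$-father of $v$, by definition $u$. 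Since mirroring turns left edges into right edges along the unique root-to-node path, Lemma~\ref{lev:lsw} applied on both sides immediately yields
\[
\lev_T(v) = \lsw_{\tilde\phi(T)}(u) \qquad\text{and}\qquad \lsw_T(v) = \lev_{\tilde\phi(T)}(u).
\]

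For $\deg_T(v)=\widetilde\deg_{\tilde\phi(T)}(u)$ I would argue in the same two cases. If $v$ is a leaf of $T$, then $u$ is the $\tilde\phi(T)$-father of $v$, hence internal in $\tilde\phi(T)$, so both sides equal $0$. If $v$ is internal with $T$-children $c_1,c_2,\ldots,c_d$ listed from left to right (so $u=c_d$), then in $B$ these form the chain with $c_1$ the left child of $v$ and $c_{i+1}$ the right child of $c_i$; mirroring, in $\phi(B)$ one has $c_1$ as the right child of $v$ and $c_{i+1}$ as the left child of $c_i$. Translating back through $\varphi$, each $c_i$ with $i\geq 2$ is the leftmost $\tilde\phi(T)$-child of $c_{i-1}$, while $c_1$ is a non-leftmost $\tilde\phi(T)$-child of its $\tilde\phi(T)$-parent, except in the boundary case $v=0$, where $c_1$ is the root of $\phi(B)$. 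In the generic case $c_1$ is thus the first ancestor of $u$ having an elder sibling, so $\widetilde\deg_{\tilde\phi(T)}(u)=d$; in the boundary case $v=0$, no ancestor of $u$ has an elder sibling, and then $\widetilde\deg_{\tilde\phi(T)}(u)=\lev_{\tilde\phi(T)}(u)$, which equals $\lsw_T(0)=d$ by the lev identity just proved.

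The main obstacle I foresee is the bookkeeping step that identifies $v\leadsto u$ with the mirror correspondence of supplied positions: the labels of supplied nodes in $\overbar B$ and in $\overbar{\phi(B)}$ follow different conventions (the former governed by $T$, the latter by $\tilde\phi(T)$), and one must verify that these two conventions are intertwined by $\phi$ precisely along $v\leadsto u$. Once this is in place, the lev/lsw identities drop out of mirror symmetry alone, while the degree identity reduces to the left-chain/right-chain encoding of the sibling structure by $\varphi$.
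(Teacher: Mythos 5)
Your proposal is correct and follows exactly the route the paper intends: the paper's entire ``proof'' is the one-line remark that the theorem follows from Lemma~\ref{lev:lsw} and the mirror symmetry of binary trees, and your argument simply supplies the details it omits, namely that $v\leadsto u$ identifies the supplied position of $v'$ in $\overbar B$ with the mirror-image supplied position carrying the label $u'$ in $\overbar{\phi(B)}$, after which the $\lev$/$\lsw$ swap is immediate and the degree identity follows from the left-chain reading of the children of $v$. Your case analysis (internal node versus leaf, with the root as a boundary case for the dual degree) is accurate and consistent with Lemma~\ref{dirc}.
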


\subsection{Generalized to weakly increasing trees}
Recall that an {\em increasing tree} with $n$ edges is a plane tree with $n+1$ nodes labeled by $\{0,1,\ldots,n\}$ such that each child receives greater label than its parent and siblings are labeled increasingly from left to right. Increasing trees are in bijection with permutations and has been extensively studied from the enumerative aspect; see~\cite{Chen,KPP} and related references therein. As a unification of increasing trees and plane trees, the weakly increasing trees labeled by a multiset were introduced by Lin--Ma--Ma--Zhou~\cite{Lin20} in 2021. Since then, various intriguing connections and bijections for weakly increasing trees have already been found~\cite{Lin20,LM,CFu,LLWZ}.  

\begin{definition}[Weakly increasing trees~\cite{Lin20}]\label{def:wit}
Fix a multiset $M=\{1^{p_1},2^{p_2},\cdots,n^{p_n}\}$  with $p=p_1+\cdots+p_n$. A
{\em weakly increasing tree} on $M$ is a plane tree with $p+1$ nodes that are labeled precisely by all elements in the multiset $M\cup\{0\}$ satisfying
\begin{enumerate}[(i)]
\item  the labels along a path from the root to any leaf are weakly increasing (vertical weakly increasing);
\item  the labels of the children of each node is weakly increasing from left to right (horizontal weakly increasing). 
\end{enumerate} 
 Denote by $\mathcal{P}_{M}$ the set of weakly increasing  trees on $M$.   
\end{definition}
Note that weakly increasing trees on $[n]:=\{1,2,\ldots,n\}$ are exactly  increasing trees with $n$ edges, while weakly increasing trees on $\{1^n\}$ are in obvious bijection with plane trees of $n$ edges.
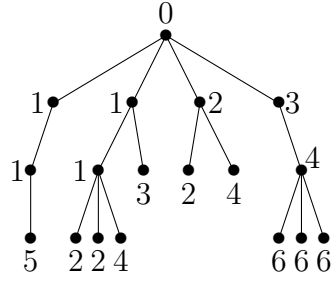
\begin{figure}
\centering
\begin{tikzpicture}[scale=0.3]
\node at (20,20) {$\bullet$};
\node at (15,17) {$\bullet$};
\node at (18.5,17) {$\bullet$};
\node at (21.5,17){$\bullet$};
\node at (21,14){$\bullet$};
\node at (23,14){$\bullet$};

\node at (25,17) {$\bullet$};
\node at (17,14) {$\bullet$};
\node at (16,11) {$\bullet$};
\node at (17,11) {$\bullet$};
\node at (18,11) {$\bullet$};

\node at (19,14) {$\bullet$};
\node at (14,14){$\bullet$};
\node at (14,11) {$\bullet$};
\node at  (26,14) {$\bullet$};

\draw[-] (26,14) to (25,11);\draw[-] (26,14) to (27,11);\draw[-] (26,14) to (26,11);
\node at  (25,11) {$\bullet$};\node at  (27,11) {$\bullet$};\node at  (26,11) {$\bullet$};
\node at (25,10) {$6$};\node at (27,10) {$6$};\node at (26,10) {$6$};

\node at (20,21) {$0$};
\node at (14.3,17) {$1$};
\node at (17.8,17) {$1$};
\node at (22.2,17){$2$};
\node at (25.6,17) {$3$};
\node at (13.4,14) {$1$};
\node at (14,10) {$5$};
\node at (16.2,14) {$1$};
\node at (19,13) {$3$};
\node at (21,13) {$2$};
\node at (23,13) {$4$};
\node at (26.5,14.6) {$4$};
\node at (16,10) {$2$};
\node at (17,10) {$2$};
\node at (18,10) {$4$};

\draw[-] (25,17) to (26,14);
\draw[-] (20,20) to (15,17);
\draw[-] (20,20) to (18.5,17);
\draw[-] (20,20) to (21.5,17);
\draw[-] (20,20) to (25,17);
\draw[-] (15,17) to (14,14);
\draw[-] (14,14) to (14,11);
\draw[-] (18.5,17) to (17,14);
\draw[-] (18.5,17) to (19,14);
\draw[-] (17,14) to (16,11);
\draw[-] (17,14) to (17,11);
\draw[-] (17,14) to (18,11);
\draw[-] (21.5,17) to (21,14);
\draw[-] (21.5,17) to (23,14);
\end{tikzpicture}
\caption{A weakly increasing tree on $\{1^4,2^4,3^2,4^3,5,6^3\}$.\label{wit}}
\end{figure}
See Fig.~\ref{wit} for a weakly increasing tree on $M=\{1^4,2^4,3^2,4^3,5,6^3\}$. 
It turns out that the bijection $\varphi$ between plane trees and binary trees can be extended naturally to a bijection between  weakly increasing trees and weakly increasing binary trees defined below. 

 \begin{definition}[Weakly increasing binary trees~\cite{LM}]
 Fix a multiset $M$. A  {\em weakly increasing binary  tree} on $M$ is a labeled  binary tree  such that 
 \begin{enumerate}[(i)]
\item  the labels of the nodes form precisely the multiset $M$ and
\item  the labels along a path from the root to any leaf is weakly increasing. 
\end{enumerate}   
 Denote by $\mathcal{B}_{M}$ the set of weakly increasing binary trees on $M$. 
\end{definition}
Note that weakly increasing binary trees on $[n]$ are exactly  increasing binary trees on $[n]$, while weakly increasing binary trees on $\{1^n\}$ are in obvious bijection with binary trees of $n$ nodes.
Since the mirror symmetry of a weakly  increasing binary tree is still a weakly increasing binary tree, the involution $\tilde\phi$ introduced for plane trees can be extended directly to weakly increasing trees $\tilde\phi:\P_M\rightarrow\P_M$. Then the same discussions as in the proof of Theorem~\ref{thm:lsw} for plane trees works for weakly increasing trees. Moreover, it turns out that such involution $\tilde\phi$ on $\P_M$ also reproves a quintuple equidistribution on weakly increasing trees found by Chen and Fu~\cite{CFu}.

To state our result, we need to recall some statistics on weakly increasing trees. Let $T$ be a weakly increasing tree. Let $\leaf(T)$ (resp.~$\intt(T)$) be the number of leaves (resp.~internal nodes) in $T$. The levels (resp.~left spanning widths) of all nodes in $T$ forms a multiset that is denoted as $\Lev(T)$ (resp.~$\Lsw(T)$). Each node that has an elder sibling (resp.~a child) with the same label is called a {\em repeated sibling (resp.~repeated parent)}. The labels of all repeated siblings (resp.~parents) in $T$ form a multiset that is denoted as $\Bro(T)$ (resp.~$\Par(T)$). A leaf  of a weakly increasing tree is {\em young} if it is the rightmost child of its parent; otherwise, it is old. Denote $\Yle(T)$ the multiset of labels of all young leaves in $T$. Young and old leaves on plane trees were introduced by Chen, Deutsch and Elizalde~\cite{Chen2006}. Take the tree $T$ in Fig.~\ref{wit} as an example, we have $\leaf(T)=10$, $\intt(T)=8$, $\Lev(T)=\{0,1^4,2^6,3^7\}$, $\Lsw(T)=\{0,1^3,2^3,3^5,4^4,5,6\}$, $\Bro(T)=\{1,2,6,6\}$, $\Par(T)=\{1,1,2\}$ and $\Yle(T)=\{3,4,4,5,6\}$.
Our first main result is the following septuple equidistribution on weakly increasing trees that merges both results of Bai--Chen~\cite{BC} and Chen--Fu~\cite{CFu}. 
\begin{theorem}\label{thm:lswit}
Fix a multiset $M$ a tree $T\in\P_M$. For any node $v$ of $T$, if $v\leadsto u$, then 
$$
\lev_T(v)=\lsw_{\tilde\phi(T)}(u),\quad \lsw_T(v)=\lev_{\tilde\phi(T)}(u) \quad\text{and}\quad \deg_T(v)=\widetilde\deg_{\tilde\phi(T)}(u).
$$
Consequently, the following two septuples
\begin{equation}\label{eq:sept}
(\Lev,\Lsw,\Bro,\Par,\Yle,\leaf,\intt)\quad\text{and}\quad(\Lsw,\Lev,\Par,\Bro,\Yle,\intt,\leaf)
\end{equation}
have the same distribution over $\P_M$.
\end{theorem}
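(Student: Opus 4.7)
The plan is to follow the binary-tree approach used for Theorem~\ref{thm:lsw}, extending every step to the weakly increasing setting and then harvesting the multiset identities from three pointwise correspondences along $v\leadsto u$.

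First I would check that $\varphi$ restricts to a bijection $\P_M\to\B_M$: a left edge of $\varphi(T)$ joins a node to its leftmost child in $T$ and a right edge joins a node to its closest younger sibling, so both kinds of edges carry weakly increasing labels by the two monotonicity axioms, and every root-to-leaf path of $\varphi(T)$ is therefore weakly increasing. Since $\phi$ merely swaps children and preserves labels, it restricts to an involution on $\B_M$; hence $\tilde\phi=\varphi^{-1}\circ\phi\circ\varphi$ is an involution on $\P_M$. Lemma~\ref{lev:lsw} is purely structural and carries over verbatim, so for $v$ in $T\in\P_M$ and $B=\varphi(T)$, $\lev_T(v)$ (resp.\ $\lsw_T(v)$) equals the number of left (resp.\ right) edges on the path from the root of $\overbar B$ to $v'$.

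For the pointwise $\lev/\lsw$ equalities, I would locate $u'$ in $\overbar{B'}$ with $B'=\phi(B)$ by two cases. If $v$ is internal in $T$ with rightmost child $c_k$, then the labeling convention places $v'$ as the right child of $u=c_k$ in $\overbar B$; since $c_k$ has no right child in $B$, it has no left child in $B'$, so the supplied left child of $c_k$ in $\overbar{B'}$ carries the label $u'$. If instead $v$ is a leaf in $T$, then $v'$ is the left child of $v$ in $\overbar B$; since $v$ has no left child in $B$, it has no right child in $B'$, so the supplied right child of $v$ in $\overbar{B'}$ is labeled $u'$ where $u$ is the parent of $v$ in $\tilde\phi(T)$. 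In both cases $u'$ sits at the mirror image of the position of $v'$, so the path from the root to $u'$ in $\overbar{B'}$ has left and right edges swapped relative to the path to $v'$ in $\overbar B$, and the extended Lemma~\ref{lev:lsw} delivers $\lev_T(v)=\lsw_{\tilde\phi(T)}(u)$ and $\lsw_T(v)=\lev_{\tilde\phi(T)}(u)$.

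The degree identity is the most delicate step. If $v$ is a leaf of $T$, the second case above shows $u$ is internal in $\tilde\phi(T)$, so both $\deg_T(v)$ and $\widetilde\deg_{\tilde\phi(T)}(u)$ vanish. If $v$ is internal with children $c_1,\ldots,c_k$ in $T$, then in $B$ the node $v$ carries $c_1$ as left child and $c_1\to c_2\to\cdots\to c_k$ as a right chain; applying $\phi$ turns this into one right edge $v\to c_1$ followed by the left chain $c_1\to c_2\to\cdots\to c_k$, so in $\tilde\phi(T)$ the node $c_k$ is a leaf, each $c_i$ with $2\le i\le k$ is the leftmost child of $c_{i-1}$, and $c_1$ is either the closest younger sibling of $v$ (when $v\ne 0$) or the leftmost child of the root (when $v=0$). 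Walking from $u=c_k$ toward the root of $\tilde\phi(T)$, I traverse the $k$ nodes $c_k,c_{k-1},\ldots,c_1$ without meeting any elder sibling before $c_1$, and either $c_1$ itself has the elder sibling $v$ or (when $v=0$) the walk continues through leftmost children all the way to the root; in both sub-cases the piecewise definition of $\widetilde\deg$ returns $k=\deg_T(v)$.

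The septuple equidistribution in~\eqref{eq:sept} then follows by combining these pointwise identities with the bijectivity of $v\leadsto u$. The $\lev/\lsw$ equalities yield $\Lev(T)=\Lsw(\tilde\phi(T))$ and $\Lsw(T)=\Lev(\tilde\phi(T))$, and since $\widetilde\deg$ vanishes exactly on internal nodes the degree identity yields $\leaf(T)=\intt(\tilde\phi(T))$ and $\intt(T)=\leaf(\tilde\phi(T))$. For $\Bro$ and $\Par$, the horizontal monotonicity of $T$ identifies a repeated sibling with a right child of $B$ sharing its parent's label, and a repeated parent with a node of $B$ whose left child shares its label; $\phi$ swaps these two families of monochromatic edges, giving $\Bro(T)=\Par(\tilde\phi(T))$ and $\Par(T)=\Bro(\tilde\phi(T))$. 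Finally, a young leaf of $T$ corresponds in $B$ to a node with neither a left nor a right child, a property plainly preserved by $\phi$, giving $\Yle(T)=\Yle(\tilde\phi(T))$ and completing the proof.
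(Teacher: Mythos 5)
Your proof is correct and follows essentially the same route as the paper: it extends $\varphi$, $\phi$, and Lemma~\ref{lev:lsw} to the weakly increasing setting, reads off the pointwise $\lev/\lsw/\deg$ identities from the mirror positions of $v'$ and $u'$ in the complements, and obtains the $\Bro/\Par/\Yle$ statements from the monochromatic-edge characterization under $\varphi$ and $\phi$. The paper's own proof is just a terser version of exactly this argument (deferring the pointwise part to the discussion of Theorem~\ref{thm:lsw}), so no further comparison is needed.
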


\begin{proof}
The first statement follows from the same discussions as in the proof of Theorem~\ref{thm:lsw} for plane trees. For the second statement, notice that a  repeated sibling (resp.~a young leaf) maps to a repeated right child (resp.~a leaf) under $\varphi$, then to a repeated left child (resp.~a leaf) under $\phi$, and finally to a repeated parent (resp.~a young leaf) under  $\varphi^{-1}$.
\end{proof}

\begin{remark}
The septuple equidistribution~\eqref{eq:sept} that does not involve $\Lev$ and $\Lsw$ was proved by Chen and Fu~\cite{CFu} using a recursively defined involution. 
\end{remark}

\subsection{Some new statistics having the same distribution as levels on plane trees}

Since the statistics $\lsw$ and $\rsw$ are in symmetry with each other, Bai and Chen  posed the following open problem in their revised version of~\cite{BC}. 
\begin{problem}[Bai and Chen]\label{prob:bij2}
	Does there exist another statistic $\sat$ associated to nodes of plane trees such that the
	pair $(\lsw, \rsw)$ is in equidistribution with the pair $(\lev,\sat)$?
\end{problem}

They believed  that such a statistic exists but could not find it. 
We will answer this problem affirmatively and introduce four new statistics  analogous to $\lsw$ and  $\rsw$, having the same distribution as levels on plane trees. 

 Let $v$ be a node of a plane tree $T$. Let $\Lc_T(v)$ be the set of nodes attached to other nodes (than $v$) on the path from $v$ to the root from the left-hand side. For example, if $T$ is the first tree in Fig.~\ref{lchain}, then $\Lc_T(10)=\{4,5\}$ and $\Lc_T(9)=\{4,8\}$. Let $\lc_T(v)=|\Lc_T(v)|$. The statistic `$\lc$' is a refinement of `$\lsw$' in the sense that $\lsw_T(v)=\deg_T(v)+\lc_T(v)$.
Our first new statistic is a modification of `$\lc$' defined as 
$$
\dlev_T(v)=
\begin{cases}
0, &\text{if $v$ is the root of $T$};\\
\lc_T(v)+1, &\text{otherwise}.
\end{cases}
$$

	The following observation follows immediately from   $\varphi$. 
\begin{lemma}\label{lev:rlsw}
	Fix $T\in\P_n$ and let $B=\varphi(T)\in\B_n$. For any node $v$ of $T$ other than the root, $\lev_{T}(v)-1$ (resp.,~$\lc_{T}(v)$) equals the number of left (resp.,~right) edges in the path from the root $1$ to $v$ in $B$. 
\end{lemma}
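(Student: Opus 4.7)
The plan is to argue directly from the definition of $\varphi$, which makes the statement almost a bookkeeping exercise. By that definition, every left edge $x \to y$ in $B$ represents ``$y$ is the leftmost $T$-child of $x$,'' and every right edge $x \to y$ represents ``$x$ is the closest elder $T$-sibling of $y$.'' Consequently, following an edge of $B$ downward from the root $1$ corresponds, in $T$, either to descending one level (left edge) or to stepping one position rightward among siblings at the same level (right edge).

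Since node $1$ sits at $T$-level $1$, and since only left edges change the $T$-level, the number of left edges on the $B$-path from $1$ to $v$ must equal $\lev_T(v) - 1$. This handles the first half.

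For the right-edge count, I would decompose the $B$-path from $1$ to $v$ according to the $T$-ancestors of $v$. Write $v = v_0, v_1, \ldots, v_\ell$ for the $T$-path from $v$ up to the root of $T$, so that $v_{\ell-1}$ is a child of the root in $T$. The $B$-path from $1$ to $v$ then factors into blocks: first, an initial run of right edges shifting among children of the $T$-root from $1$ to $v_{\ell-1}$; then, for each $i = \ell-1, \ell-2, \ldots, 1$, a single left edge from $v_i$ to its leftmost $T$-child followed by a run of right edges walking through the sibling chain to reach $v_{i-1}$. The length of the $i$-th right-edge run is precisely the number of elder $T$-siblings of $v_{i-1}$, and summing over all blocks yields $\sum_{i=0}^{\ell-1}(\text{elder $T$-siblings of } v_i)$, which is exactly $\lc_T(v)$ by the definition of $\Lc_T(v)$ given in the text.

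The only delicate point is the index shift coming from $B$ being rooted at node $1$ rather than at the $T$-root $0$; that is precisely what accounts for the offset producing $\lev_T(v)-1$ in place of $\lev_T(v)$. Once this is pinned down, the rest of the verification is routine unraveling of the rules defining $\varphi$, and no additional induction or auxiliary construction is needed.
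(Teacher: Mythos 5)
Your argument is correct and follows the same route as the paper, which simply asserts that the lemma is immediate from the definition of $\varphi$ (left edges of $B$ encode ``leftmost child'' and right edges encode ``next younger sibling''). Your block decomposition of the $B$-path from $1$ to $v$, with the right-edge runs counting elder siblings of the $T$-ancestors $v_0,\dots,v_{\ell-1}$ and hence summing to $\lc_T(v)$, is just an explicit write-up of that observation, including the correct $-1$ offset from $B$ being rooted at node $1$.
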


This simple lemma has one interesting application about longest increasing/decreasing subsequences in $132$-avoiding permutations. Recall that a permutation $\pi=\pi_1\pi_2\cdots\pi_n$ is said to be {\em $132$-avoiding} if there are no indices $1\leq i<j<k\leq n$ such that $\pi_i<\pi_k<\pi_j$. Let $\SS_n(132)$ be the set of all $132$-avoiding permutations of length $n$. For each permutation $\pi$, denote $\is(\pi)$ (resp.~$\ds(\pi)$) the length of the longest increasing (resp.~decreasing) subsequence of $\pi$. The distributions of these two statistics over $132$-avoiding permutations were studied by Krattenthaler~\cite{Kra} and Reifegerste~\cite{Re}. Combining $\tilde\phi$ with Jani and Rieper's bijection~\cite{JR} between $\P_n$ and $\SS_n(132)$ leads to the following inequality distribution.

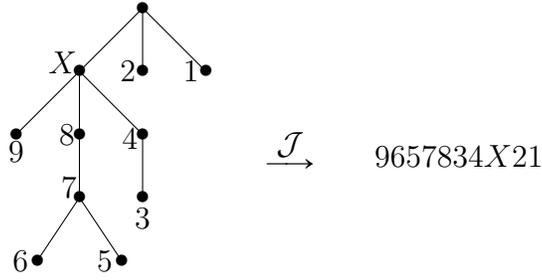
\begin{figure}
	\centering
	\begin{tikzpicture}[scale=0.28]
	\draw[-](0,9) to (-3,6);\draw[-](0,9) to (0,6);\draw[-](0,9) to (3,6);
		\draw[-](-3,6) to (-6,3);\draw[-](-3,6) to (0,3);\draw[-](0,3) to (0,0);
		\draw[-](-3,6) to (-3,3);\draw[-](-3,0) to (-3,3);
		\draw[-](-3,0) to (-5,-3);\draw[-](-3,0) to (-1,-3);
		\node at  (0,9){$\bullet$};
		\node at  (-3,6){$\bullet$};\node at  (-3.8,6.4){$X$};
		\node at  (0,6){$\bullet$};\node at  (-0.7,6){$2$};
		\node at  (3,6){$\bullet$};\node at  (2.3,6){$1$};
		\node at  (-6,3){$\bullet$};\node at  (-6,2.2){$9$};
		\node at  (0,3){$\bullet$};\node at  (-0.6,2.8){$4$};
		\node at  (-3,3){$\bullet$};\node at  (-3.6,3){$8$};
		\node at  (-3,0){$\bullet$};
		\node at  (-5,-3){$\bullet$};\node at  (-5.8,-3){$6$};
		\node at  (-1,-3){$\bullet$};\node at  (-1.8,-3){$5$};
		\node at  (0,0){$\bullet$};\node at  (0,-1){$3$};
		\node at  (-3.5,0.5){$7$};
		
		\node at  (7,1.5){$\longrightarrow$};\node at  (7,2.5){$\JR$};
		
		\node at  (15,2){$9657834X21$};
	
	\end{tikzpicture}
	\caption{An example of Jani--Rieper's bijection $\JR$.\label{jani}}
\end{figure}
\begin{proposition}\label{is:ds}
There exists an involution $\theta$ on $\SS_n(132)$ such that 
\begin{equation}
\is(\pi)\leq \ds(\theta(\pi))\quad\text{and}\quad\is(\theta(\pi))\leq \ds(\pi)
\end{equation}
for each $\pi\in\SS_n(132)$. In particular, we have 
$$
\sum_{\pi\in\SS_n(132)} \is(\pi)\leq \sum_{\pi\in\SS_n(132)} \ds(\pi). 
$$
\end{proposition}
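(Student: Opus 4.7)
The plan is to transport the involution $\tilde\phi$ from $\P_n$ to $\SS_n(132)$ via Jani--Rieper's bijection $\JR$, by setting
$$
\theta := \JR\circ\tilde\phi\circ\JR^{-1}.
$$
Since $\tilde\phi$ is an involution on $\P_n$ and $\JR$ is a bijection, $\theta$ is automatically an involution on $\SS_n(132)$. The work is then entirely on the comparison of $\is,\ds$ with tree statistics on the two sides of $\JR$.

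The crux of the proof is a pair of inequalities comparing $\is,\ds$ on the permutation side with the statistics $\lev,\lc$ on the plane tree side. Concretely, I would aim to establish that for every $T\in\P_n$,
\begin{equation}\label{eq:iskeyineq}
\is(\JR(T))\leq \max_{v}\lev_T(v)\quad\text{and}\quad \ds(\JR(T))\geq 1+\max_v\lc_T(v),
\end{equation}
with both maxima ranging over non-root nodes $v$ of $T$. The geometric intuition is encoded in Lemma~\ref{lev:rlsw}: under $\JR$, an increasing subsequence of $\JR(T)$ can be extracted only along a ``left chain'' descending from the root in the binary tree $B=\varphi(T)$, while any ``right chain'' in $B$ yields a decreasing subsequence of the same length in $\JR(T)$. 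Lemma~\ref{lev:rlsw} then identifies the maximum lengths of these two types of chains with $\max_v \lev_T(v)$ and $1+\max_v\lc_T(v)$ respectively.

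Granting~\eqref{eq:iskeyineq}, the mirror symmetry $\phi$ applied to $B$ swaps the counts of left and right edges on every root-to-node path; combining Lemma~\ref{lev:rlsw} with $\tilde\phi=\varphi^{-1}\circ\phi\circ\varphi$ gives, for every non-root node $v$,
$$
\lev_{\tilde\phi(T)}(v)=\lc_T(v)+1\quad\text{and}\quad \lc_{\tilde\phi(T)}(v)=\lev_T(v)-1.
$$
Chaining these identities with the inequalities in~\eqref{eq:iskeyineq} yields
$$
\is(\pi)\leq \max_v\lev_T(v)=1+\max_v\lc_{\tilde\phi(T)}(v)\leq \ds(\JR(\tilde\phi(T)))=\ds(\theta(\pi)),
$$
and the companion bound $\is(\theta(\pi))\leq\ds(\pi)$ is obtained by applying the same reasoning to $\tilde\phi(T)$ in place of $T$. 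The summation inequality follows at once from $\theta$ being a bijection on $\SS_n(132)$.

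The principal obstacle is the rigorous verification of the two inequalities in~\eqref{eq:iskeyineq}, which requires opening up Jani--Rieper's construction and tracking how the longest increasing (respectively decreasing) subsequence of $\JR(T)$ is governed by the left/right branching structure of $\varphi(T)$. The harder direction is the upper bound for $\is$: one must argue that no increasing subsequence can be hidden in $\JR(T)$ beyond what a left-leaning descent in the binary tree allows, presumably by induction on $n$ and a decomposition of $\pi\in\SS_n(132)$ at the position of $n$ that mirrors the recursive structure of $\JR$.
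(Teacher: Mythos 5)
Your strategy coincides with the paper's: the same involution $\theta=\JR\circ\tilde\phi\circ\JR^{-1}$, the same translation of $\is$ and $\ds$ into the tree statistics $\lev$ and $\lc+1$, and the same use of Lemma~\ref{lev:rlsw} together with the mirror symmetry to swap them. Your two identities $\lev_{\tilde\phi(T)}(v)=\lc_T(v)+1$ and $\lc_{\tilde\phi(T)}(v)=\lev_T(v)-1$ are correct and are exactly what the paper exploits.

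The gap is the pair of key inequalities you defer, and your proposed route to them (induction on $n$ with a decomposition at the position of $n$) is not how they are most naturally obtained; the paper gets them by a direct, non-inductive reading of $\JR$. For $T=\JR^{-1}(\pi)$ with non-root nodes labeled by reverse preorder, set $\IR(\pi_i)=\{\pi_j: j\geq i\text{ and }\pi_j\geq\pi_{\ell}\text{ for all }i\leq\ell<j\}$ and $\IL(\pi_i)=\{\pi_j: j\leq i\text{ and }\pi_j\geq\pi_{\ell}\text{ for all }j<\ell\leq i\}$. Then $\IR(\pi_i)$ is precisely the set of non-root nodes on the path from $\pi_i$ to the root, so $|\IR(\pi_i)|=\lev_T(\pi_i)$; since each $\IR(\pi_i)$ is an increasing subsequence and, by $132$-avoidance, some $\IR(\pi_i)$ realizes the longest one, this gives $\is(\pi)=\max_v\lev_T(v)$ (an equality, stronger than the bound you ask for). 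Dually, $\IL(\pi_i)=\Lc_T(\pi_i)\cup\{\pi_i\}$ is a decreasing subsequence of $\pi$ of size $\lc_T(\pi_i)+1$, which yields $\ds(\pi)\geq 1+\max_v\lc_T(v)$ for an arbitrary permutation with no induction needed. With these two observations supplied, your chain of inequalities closes the argument exactly as in the paper. Note also that your heuristic attributing increasing subsequences to ``left chains'' of $\varphi(T)$ is slightly off: the relevant objects are root-to-node paths in the plane tree $T$ (equivalently, arbitrary root-to-node paths in $\varphi(T)$ counted by their left edges), not maximal all-left paths.
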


Let us first review briefly Jani and Rieper's bijection $\JR:\P_n\rightarrow\SS_n(132)$. 
Given a plane tree $T\in\P_n$, a node of $T$ is said to be the $i$-th node under the preorder (resp.~reverse preorder) in $T$ iff it is the 
$i$-th node under the preorder (resp.~reverse preorder) in $\varphi(T)$.  Note that the preorder of plane trees defined here agrees with that in~\cite[Page~10]{St1} and a node $v$ is the $i$-th node under the preorder iff it is the $(n+2-i)$-th node under the reverse preorder. See Fig.~\ref{plan:preorder} for a plane tree $T$ labeled by its preorder. The permutation $\JR(T)\in\SS_n(132)$ is constructed in two steps:
\begin{itemize}
\item label each non-root node of $T$ by its reverse preorder;
\item then $\JR(T)$ is the word obtained by  reading the labeled tree in {\em postorder}, that is, recursively traversing the left subtree  to the right subtrees then to the parent.
\end{itemize}
See Fig.~\ref{jani} for an example of Jani and Rieper's bijection $\JR$.

\begin{remark}
The inverse of $\JR$ is essentially the restriction of the classical bijection (see~\cite[Example~1.3.15]{St0}) between permutations and increasing trees on $213$-avoiding permutations. In fact, given $\pi\in\SS_n(132)$, for $i$ from $n$ down to $1$, construct the (labeled) plane tree $\JR^{-1}(\pi)$ by defining $\pi_i$ to be the child of the leftmost letter  $\pi_j$ ($j>i$) which is greater than $\pi_i$. If there is no such letter $\pi_j$, then let $\pi_i$ be the child of the root. 
\end{remark}

\begin{proof}[{\bf Proof of Proposition~\ref{is:ds}}] We set $\theta=\JR\circ\tilde{\phi}\circ\JR^{-1}$. Since $\theta$ is an involution, we only need to show $\is(\pi)\leq \ds(\theta(\pi))$.

For $\pi\in\SS_n(132)$ and an index $i\in[n]$, introduce 
\begin{align*}
\IR(\pi_i)&=\{\pi_j: j\geq i\text{ and }\pi_j\geq\pi_{\ell}\text{ for all $i\leq\ell<j$}\},\\
\IL(\pi_i)&=\{\pi_j: j\leq i\text{ and }\pi_j\geq\pi_{\ell}\text{ for all $j<\ell\leq i$}\}.
\end{align*}
Suppose that  $T=\JR^{-1}(\pi)$ whose non-root nodes are labeled as in the first step of $\JR$, i.e., according to the reverse preorder. Then, we have the following two observations:
\begin{enumerate}
\item $\IR(\pi_i)$ equals the set of nodes (other than the root) in the path from $\pi_i$ to the root;
\item $\IL(\pi_i)=\Lc_T(\pi_i)\cup\{\pi_i\}$. 
\end{enumerate}
If we introduce the two multisets 
$$
\IR(\pi)=\{|\IR(\pi_i)|:i\in[n]\}\quad\text{and}\quad \IL(\pi)=\{|\IL(\pi_i)|:i\in[n]\},
$$ 
then by Lemma~\ref{lev:rlsw}, we have $\IR(\pi)=\IL(\theta(\pi))$. As $\pi$ is $132$-avoiding, $\is(\pi)=\max(\IR(\pi))$. On the other hand, $\ds(\sigma)\geq \max(\IL(\sigma))$ for any permutation $\sigma$.
This leads to the conclusion that $\is(\pi)\leq \ds(\theta(\pi))$. 
\end{proof} 

Before introducing the other three statistics, we need an auxiliary definition. The {\em left chain} of a node $v$ in $T$ is a path $v=v_{1},v_{2},...,v_{k}$, where $v_{i+1}$ is the eldest child of $v_{i}$ for $i=1,\ldots,k-1$ and $v_{k}$ is a leaf in $T$. 
Denote by $\lchain_{T}(v)$ the length of the left chain of $v$ in $T$. 
Our second statistic is  defined as
$$
\dlsw_{T}(v)=
\begin{cases}
\lchain_{T}(v), &\text{if $v$ is the root};\\
\lchain_{T}(u)+\lev_{T}(v), &\text{if $v$ has the closest younger sibling  $u$};\\
\lev_{T}(v)-1, &\text{otherwise}.
\end{cases}
$$
Our third statistic is  defined as 
$$
\drsw_{T}(v)=
\begin{cases}
1+\sum\limits_{w\in\Lc_{T}(v)\cup\{u,v\}}\lchain_{T}(w), &\text{if $v$ has the closest younger sibling  $u$};\\
\sum\limits_{w\in\Lc_{T}(v)\cup\{v\}}\lchain_{T}(w), &\text{otherwise}.
\end{cases}
$$

\begin{theorem}\label{thm:new three}
	For any plane tree $T$ and a node $v$ of $T$, we have
	\begin{equation}\label{Statistics:two}
		(\lev,\lsw,\rsw)_{T}(v)=(\dlev,\dlsw,\drsw)_{\tilde\phi(T)}(v).
	\end{equation}
\end{theorem}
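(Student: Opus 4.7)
The plan is to decode all six statistics through the binary tree $B=\varphi(T)$, whose mirror $\phi(B)$ equals $\varphi(\tilde\phi(T))$, exploiting that $\varphi$ encodes ``leftmost-child-in-$T$'' as a left edge and ``next-sibling-in-$T$'' as a right edge, so that $\phi$ exchanges these two roles. The first coordinate is then immediate from Lemma~\ref{lev:rlsw} applied to $\tilde\phi(T)$: the quantity $\lc_{\tilde\phi(T)}(v)$ equals the number of right edges on the path from $1$ to $v$ in $\phi(B)$, which by mirroring equals the number of left edges on the path from $1$ to $v$ in $B$, namely $\lev_T(v)-1$. Hence $\dlev_{\tilde\phi(T)}(v)=\lc_{\tilde\phi(T)}(v)+1=\lev_T(v)$ for $v\neq 0$, and both sides vanish at the root.

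For the remaining two coordinates, the technical core will be to establish two structural identities: \emph{(i)} for any non-root $v$, $\Lc_{\tilde\phi(T)}(v)$ equals the set of $T$-ancestors of $v$ strictly between $v$ and the root $0$; and \emph{(ii)} for any non-root $w$, $\lchain_{\tilde\phi(T)}(w)$ equals the number of younger siblings of $w$ in $T$, while $\lchain_{\tilde\phi(T)}(0)=\deg_T(0)$. For \emph{(i)}, trace the path from $v$ to $1$ in $\phi(B)$: at each step where the path-child is a right child of its $\phi(B)$-parent, that parent is, by $\varphi^{-1}$, the closest elder sibling in $\tilde\phi(T)$ of the child, and these parents collectively form $\Lc_{\tilde\phi(T)}(v)$. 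Under mirroring, the same set appears on the $B$-path as those $B$-parents whose path-child is a left child; by the $\varphi$ rule these are precisely the $T$-parent, $T$-grandparent, etc., of $v$ lying below the root. For \emph{(ii)}, the leftmost-child chain from $w$ in $\tilde\phi(T)$ corresponds via $\varphi^{-1}\circ\phi$ to the right-chain from $w$ in $B$, which enumerates the successive next-siblings of $w$ in $T$; the root case follows from the fact that $1$ is always the leftmost child of $0$ in $\tilde\phi(T)$.

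With \emph{(i)} and \emph{(ii)} in hand, the remaining equalities follow by a short case analysis matching the piecewise definitions of $\dlsw$ and $\drsw$. When $v$ is an internal non-root node of $T$, its closest younger sibling $u$ in $\tilde\phi(T)$ is the leftmost child of $v$ in $T$, so \emph{(ii)} gives $\lchain_{\tilde\phi(T)}(u)=\deg_T(v)-1$; together with the reverse first-coordinate identity $\lev_{\tilde\phi(T)}(v)=\lc_T(v)+1$ and $\lsw_T(v)=\deg_T(v)+\lc_T(v)$, the first branch of $\dlsw$ collapses directly to $\lsw_T(v)$, while summing $\lchain_{\tilde\phi(T)}$ over $\Lc_{\tilde\phi(T)}(v)\cup\{v\}$ via \emph{(i)}--\emph{(ii)} recovers the younger-sibling counts along the $T$-path of $v$ and, after adding the $u$-contribution, yields $\drsw_{\tilde\phi(T)}(v)=\rsw_T(v)$. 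The leaf and root cases are handled analogously via the ``otherwise'' branches. The main obstacle I anticipate is the bookkeeping in \emph{(i)} and \emph{(ii)}: since mirroring rearranges the tree globally, one must carefully distinguish which vertices on the various paths play the role of $T$-ancestors, elder siblings, or younger siblings, and must treat the root $v=0$ separately because it lies outside the binary tree $B$.
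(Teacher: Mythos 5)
Your proposal is correct and follows essentially the same route as the paper: the first coordinate via Lemma~\ref{lev:rlsw}, and the other two via the correspondence between $T$-ancestors and $\Lc_{\tilde\phi(T)}$ together with the fact that left chains in $\tilde\phi(T)$ enumerate younger siblings in $T$, followed by the same case analysis. Your structural identities \emph{(i)} and \emph{(ii)} are exactly the content of the paper's Lemmas~\ref{lsw:dlsw} and~\ref{rsw:drsw} (with \emph{(ii)} stated a touch more generally than Lemma~\ref{lsw:dlsw}, as the paper itself implicitly uses inside the proof of Lemma~\ref{rsw:drsw}).
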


In order to prove Theorem~\ref{thm:new three}, we  need  a direct description of the  involution $\tilde{\phi}$ as follows:
	  \begin{enumerate}[(i)]
	  \item the node $1$ is the eldest child of the root $0$ in $\tilde{\phi}(T)$;
	  	\item  if $v$ is the eldest child of a node $u$ other than the root in $T$, then $v$ becomes the closest younger sibling of $u$ in $\tilde{\phi}(T)$;
	  	\item  if $v$ is not the eldest child of its parent and $u$ is the closest  sibling elder  than $v$ in $T$, then $v$ is the eldest child of $u$ in $\tilde{\phi}(T)$.
	  \end{enumerate} 
See Fig.~\ref{lchain} for an example of the direct description of $\tilde{\phi}$. The following observation is clear from this direct description of $\tilde{\phi}$.

\begin{lemma}\label{lsw:dlsw}
	Fix $T\in\P_n$ and let $\tilde{T}=\tilde{\phi}(T)$. For any internal node $v$ of $T$ other than the root, if $u$ is the eldest child of $v$ in $T$, then 
	$$
	\deg_{T}(v)=\lchain_{\tilde{T}}(u)+1.
	$$
\end{lemma}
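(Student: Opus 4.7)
The plan is to use the direct description of $\tilde\phi$ in rules (i)--(iii) stated just before the lemma to track where the children of $v$ in $T$ land inside $\tilde T$, and then recognize the image as precisely the left chain of $u$.

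First I would set notation: let $v$ be an internal node of $T$ other than the root, and enumerate the children of $v$ in $T$ from left to right as $u = c_1, c_2, \ldots, c_k$, so that $k = \deg_T(v)$ and $u = c_1$ is the eldest. Applying rule (ii) to $c_1$, which is the eldest child of the non-root node $v$, turns $c_1 = u$ into the closest younger sibling of $v$ in $\tilde T$. For each $i \geq 2$, the node $c_i$ is not the eldest child of its parent in $T$ and its closest elder sibling in $T$ is $c_{i-1}$; rule (iii) therefore makes $c_i$ the eldest child of $c_{i-1}$ in $\tilde T$. Concatenating these identifications exhibits the sequence $u = c_1, c_2, \ldots, c_k$ as a descending path of eldest children in $\tilde T$ starting from $u$.

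The crucial remaining step is to argue that $c_k$ is a leaf of $\tilde T$, so that this path is precisely the left chain of $u$. Suppose for contradiction that some node $w$ is a child of $c_k$ in $\tilde T$. Rule (i) does not apply since $c_k \neq 0$. Rule (iii) would force $c_k$ to be the closest elder sibling of $w$ in $T$, but $c_k$ has no younger sibling in $T$. Rule (ii) can only place $w$ as a child of $c_k$ in $\tilde T$ if $w$ is the eldest child in $T$ of some other node $x$ which itself is already a child of $c_k$ in $\tilde T$; iterating, we need some node to be a child of $c_k$ in $\tilde T$ by one of the three rules, and the same case analysis rules that out. Hence $c_k$ has no children in $\tilde T$, so the left chain of $u$ in $\tilde T$ is exactly $c_1, c_2, \ldots, c_k$, carrying $k-1$ edges.

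Thus $\lchain_{\tilde T}(u) = k - 1 = \deg_T(v) - 1$, which is the claimed identity. The only delicate part is the verification that $c_k$ is a leaf of $\tilde T$, which I expect to be the main obstacle, since one must carefully exclude all three mechanisms by which the direct description of $\tilde\phi$ can assign children to $c_k$.
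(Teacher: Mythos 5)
Your proof is correct and follows exactly the route the paper intends: the paper simply asserts the lemma is "clear from the direct description of $\tilde\phi$", and your argument supplies the details by applying rules (ii) and (iii) to the children $c_1,\ldots,c_k$ of $v$ and then checking that none of the three rules can give $c_k$ a child in $\tilde T$ (the finiteness/iteration step for rule (ii) is the right way to close that loop). This matches the worked example in the paper's Fig.~\ref{lchain}, where the children $4,5,6$ of node $1$ become the left chain of $4$ in $\tilde T$.
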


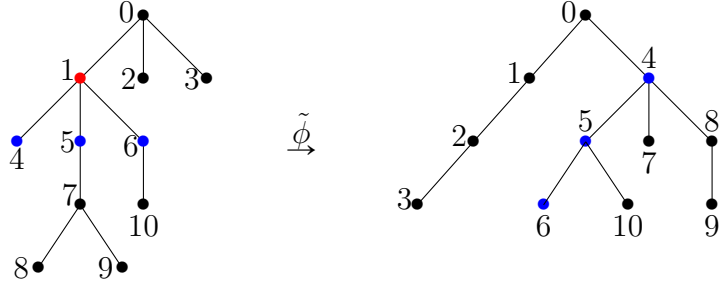
\begin{figure}
	\centering
	
	\begin{tikzpicture}[scale=0.28]
		\draw[-](0,9) to (-3,6);\draw[-](0,9) to (0,6);\draw[-](0,9) to (3,6);
		\draw[-](-3,6) to (-6,3);\draw[-](-3,6) to (0,3);\draw[-](0,3) to (0,0);
		\draw[-](-3,6) to (-3,3);\draw[-](-3,0) to (-3,3);
		\draw[-](-3,0) to (-5,-3);\draw[-](-3,0) to (-1,-3);
		\node at  (0,9){$\bullet$};\node at  (-0.8,9.2){$0$};
		\node[red] at  (-3,6){$\bullet$};\node at  (-3.6,6.4){$1$};
		\node at  (0,6){$\bullet$};\node at  (-0.7,6){$2$};
		\node at  (3,6){$\bullet$};\node at  (2.3,6){$3$};
		\node[blue] at  (-6,3){$\bullet$};\node at  (-6,2.2){$4$};
		\node[blue] at  (0,3){$\bullet$};\node at  (-0.6,2.8){$6$};
		\node[blue] at  (-3,3){$\bullet$};\node at  (-3.6,3){$5$};
		\node at  (-3,0){$\bullet$};
		\node at  (-5,-3){$\bullet$};\node at  (-5.8,-3){$8$};
		\node at  (-1,-3){$\bullet$};\node at  (-1.8,-3){$9$};
		\node at  (0,0){$\bullet$};\node at  (0,-1){$10$};
		\node at  (-3.5,0.5){$7$};
		
		\node at  (7.5,2.5){$\rightarrow$};\node at  (7.5,3.5){$\tilde{\phi}$};
		
		\node at  (21,9){$\bullet$};\node at  (20.2,9.2){$0$};
		\draw[-](21,9) to (13,0);\draw[-](21,9) to (24,6);\draw[-](24,6) to (21,3);
		\node at  (18.33,6){$\bullet$};\node at  (17.6,6.3){$1$};
		\node at  (15.66,3){$\bullet$};\node at  (15,3.3){$2$};
		\node at  (13,0){$\bullet$};\node at  (12.4,0.3){$3$};
		\draw[-](24,6) to (27,3);\draw[-](27,3) to (27,0);\draw[-](24,6) to (24,3);
		\node[blue] at  (24,6){$\bullet$};\node at  (24,7){$4$};
		\node at  (27,3){$\bullet$};\node at  (27,4){$8$};
		\node at  (27,0){$\bullet$};\node at  (27,-1){$9$};
		\node at  (24,3){$\bullet$};\node at  (24,2){$7$};
		\node[blue] at  (21,3){$\bullet$};\node at  (21,4){$5$};
		\node[blue] at  (19,0){$\bullet$};\node at  (19,-1){$6$};
		\node at  (23,0){$\bullet$};\node at  (23,-1){$10$};
		\draw[-](21,3) to (19,0);\draw[-](21,3) to (23,0);
	\end{tikzpicture}
	\caption{An example of $\deg_{T}(1)=\lchain_{\tilde{T}}(4)+1$.\label{lchain}}
\end{figure}

\begin{lemma}\label{rsw:drsw}
	Fix $T\in\P_n$ and let $\tilde{T}=\tilde{\phi}(T)$. For any node $v$ of $T$, we have
	$$
	\rsw_{T}(v)=\drsw_{\tilde{T}}(v).
	$$
\end{lemma}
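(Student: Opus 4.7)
The plan is to reduce both sides of $\rsw_T(v)=\drsw_{\tilde{T}}(v)$ to identical sums indexed by the ancestors of $v$ in $T$. Let $v=v_0,v_1,\ldots,v_k$ denote the path from $v$ to the root of $T$, so that the definition of $\rsw$ reads
$$
\rsw_T(v) \;=\; \deg_T(v) + \sum_{i=0}^{k-1}\#\{\text{younger siblings of }v_i\text{ in }T\}.
$$
Two identities about $\tilde{T}$ will drive the calculation; once they are in hand the verification is a substitution.

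The first identity, the companion to Lemma~\ref{lsw:dlsw}, is that for every non-root node $w$ of $T$,
$$
\lchain_{\tilde{T}}(w) \;=\; \#\{\text{younger siblings of }w\text{ in }T\}.
$$
This follows directly from the rules (i)--(iii) describing $\tilde\phi$: the leftmost child of $w$ in $\tilde{T}$ is the closest younger sibling of $w$ in $T$, and iterating this rule traces out precisely $w$'s younger siblings in $T$ in order from nearest to furthest.

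The second, more delicate, identity is
$$
\Lc_{\tilde{T}}(v) \;=\; \{v_1,v_2,\ldots,v_{k-1}\},
$$
which I would prove by induction on the length $\ell$ of the path from $v$ to the root in $\tilde{T}$. Applying rules (i)--(iii) shows that the children of $\tilde{v}_1$ (the parent of $v$ in $\tilde{T}$), read from eldest to youngest, form the sequence $v_I,v_{I-1},\ldots,v_0,u,u_1,\ldots$, where $I$ is the least index such that either $v_I$ has an elder sibling in $T$ or $v_{I+1}$ is the root, and $u,u_1,\ldots$ is the leftmost-child chain of $v$ in $T$. Hence the elder siblings of $v$ in $\tilde{T}$ are exactly $\{v_1,\ldots,v_I\}$. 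When $\ell=1$, $\tilde{v}_1$ is the root and $I=k-1$, giving the base case. When $\ell\geq 2$, $\tilde{v}_1$ is the closest elder sibling of $v_I$ in $T$, whose non-root ancestors in $T$ are $\{v_{I+1},\ldots,v_{k-1}\}$; the inductive hypothesis applied to $\tilde{v}_1$ yields $\Lc_{\tilde{T}}(\tilde{v}_1)=\{v_{I+1},\ldots,v_{k-1}\}$, and the disjoint decomposition $\Lc_{\tilde{T}}(v)=\{v_1,\ldots,v_I\}\sqcup\Lc_{\tilde{T}}(\tilde{v}_1)$ closes the induction.

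Assembling the pieces, when $v$ is internal in $T$ with leftmost child $u$, then $u$ is the closest younger sibling of $v$ in $\tilde{T}$ and Lemma~\ref{lsw:dlsw} gives $1+\lchain_{\tilde{T}}(u)=\deg_T(v)$; expanding the definition of $\drsw_{\tilde{T}}(v)$ and substituting the two identities yields
$$
\drsw_{\tilde{T}}(v) \;=\; \deg_T(v) + \sum_{i=0}^{k-1}\#\{\text{younger siblings of }v_i\text{ in }T\} \;=\; \rsw_T(v).
$$
The leaf case is the same calculation with $\deg_T(v)=0$, and the case $v=\text{root}$ is immediate from $\lchain_{\tilde{T}}(\text{root})=\deg_T(\text{root})$. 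The main obstacle is the second identity: one must carefully track how parenthood in $\tilde{T}$ alternates between walking up a leftmost chain in $T$ and jumping sideways to a closest elder sibling, and verify that the elder-sibling sets accumulated along the $\tilde{T}$-path from $v$ to the root together partition the non-root ancestors of $v$ in $T$.
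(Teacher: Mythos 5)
Your proof is correct and takes essentially the same route as the paper: the paper's argument also expands $\rsw_T(v)$ along the path $v_0,\ldots,v_k$, asserts that $v_1,\ldots,v_{k-1}$ become $\Lc_{\tilde T}(v)$ and that the younger siblings of each $v_i$ become the left chain of $v_i$ in $\tilde T$, and then invokes Lemma~\ref{lsw:dlsw} for the degree term. The only difference is one of detail: you supply an explicit inductive verification of the identity $\Lc_{\tilde T}(v)=\{v_1,\ldots,v_{k-1}\}$, which the paper simply reads off from the direct description of $\tilde\phi$.
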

\begin{proof}
	Suppose the path from $v$ to the root in $T$ is $v_{0},v_{1},...,v_{k}$ with $v_k=0$. By definition, $\rsw_T(v)$ equals the sum of the numbers of all the younger siblings of $v_{i}$ ($0\leq i<k$) plus $\deg_{T}(v)$. 
	By the direct description of $\tilde{\phi}$, the nodes $v_{1},...,v_{k-1}$  of $T$ are mapped to the nodes in $\Lc_{\tilde{T}}(v)$ and all the younger siblings of $v_{i}$ ($0\leq i<k$)  in $T$ are mapped to the rest nodes of left chain of $v_{i}$ other than $v_{i}$ in $\tilde{T}$. Combining with Lemma~\ref{lsw:dlsw} finishes the proof of the lemma.
\end{proof}

Now, we are in position to prove Theorem~\ref{thm:new three}. 
\begin{proof}[{\bf Proof of Theorem~\ref{thm:new three}}]
	By Lemmas~\ref{lev:rlsw} and~\ref{rsw:drsw}, we have 
	$$
	(\lev,\rsw)_{T}(v)=(\dlev,\drsw)_{\tilde\phi(T)}(v).
	$$ 
	It remains to show that $\lsw_T(v)=\dlsw_{\tilde\phi(T)}(v)$. We distinguish the following three cases.
	
	\begin{enumerate}
	\item If $v=0$ is the root  of $T$, then it follows from the direct description of $\tilde{\phi}$ that $\deg_T(0)=\lchain_{\tilde\phi(T)}(0)$ and so $\lsw_T(0)=\dlsw_{\tilde\phi(T)}(0)$.
	\item If $v$ is a leaf of $T$, then $\lsw_T(v)=\dlsw_{\tilde\phi(T)}(v)$ by Lemma~\ref{lev:rlsw}. 
	\item Otherwise, $v$ is an internal node  of $T$ other than the root.	By Lemmas~\ref{lev:rlsw} and~\ref{lsw:dlsw}, we have 
	$$
	\lsw_T(v)=\lc_T(v)+\deg_{T}(v)=\lev_{\tilde\phi(T)}(v)-1+\lchain_{\tilde\phi(T)}(u)+1=\dlsw_{\tilde\phi(T)}(v),
	$$
	as desired.
	\end{enumerate}
	The proof of Theorem~\ref{thm:new three} is complete. 
\end{proof}

We will solve Problem~\ref{prob:bij2} by introducing the fourth statistic which is named {\em dual right spanning width of $v$} in $T$: 
$$\widetilde\rsw_{T}(v)=\sum_{w\in\Lc_{T}(v)\cup\mathsf{Ch}_T(v)}\lchain_{T}(w)+\widetilde\deg_{T}(v),$$
where $\mathsf{Ch}_T(v)$ is the set of all children of $v$ in $T$. For example, if $T$ is the second tree in Fig.~\ref{trsw}, then $\widetilde\rsw_{T}(4)=4$ and $\widetilde\rsw_{T}(10)=5$.
The following theorem answers Problem~\ref{prob:bij2}. 

\begin{figure}
	\centering
	\begin{tikzpicture}[scale=0.28]
		\draw[-](0,9) to (-3,6);\draw[very thick,blue](0,9) to (3,6);
		\draw[-](-3,3) to (-6,0);\draw[-](-3,3) to (0,0);\draw[-](0,0) to (-3,-3);
		\draw[-](-3,6) to (-3,3);\draw[very thick,blue](0,0) to (3,-3);\draw[-](-3,-3) to (-6,-6);\draw[very thick,blue](0,-6) to (-3,-3);
		\draw[very thick,blue](-3,-6) to (-3,-3);
		\node at  (0,9){$\bullet$};\node at  (-0.8,9.2){$0$};
		\node at  (-3,6){$\bullet$};\node at  (-3.6,6.4){$1$};
		\node at  (3,6){$\bullet$};\node at  (3,5){$2$};
		\node at  (-6,0){$\bullet$};\node at  (-6,1){$4$};
		\node at  (-3,3){$\bullet$};\node at  (-3.7,3.2){$3$};
		\node at  (-3,-3){$\bullet$};\node at  (-3,-2){$6$};
		\node at  (0,0){$\bullet$};\node at  (0,1){$5$};
		\node at  (3,-3){$\bullet$};\node at  (3,-4){$7$};
		\node at  (0,-6){$\bullet$};\node at  (0,-7){$10$};
		\node[red] at  (-6,-6){$\bullet$};\node at  (-6,-7){$8$};
		\node at  (-3,-6){$\bullet$};\node at  (-3,-7){$9$};
		
		\node at  (8.5,1.5){$\rightarrow$};\node at  (8.5,2.5){$\tilde{\phi}$};
		
		\node at  (17,9){$\bullet$};\node at  (16.2,9.2){$0$};
		\draw[-](17,9) to (14,6);\draw[-](17,9) to (20,6);\draw[-](20,6) to (17,3);\draw[-](17,9) to (17,6);
		\draw[-](14,6) to (14,3);\draw[-](20,3) to (20,0);\draw[-](23,-3) to (23,0);
		\node at  (14,6){$\bullet$};\node at  (13.3,6){$1$};
		\draw[-](20,6) to (23,3);\draw[-](23,3) to (23,0);\draw[-](20,6) to (20,3);
		\node[red] at  (20,6){$\bullet$};\node at  (20,7){$4$};
		\node at  (23,3){$\bullet$};\node at  (23,4){$8$};
		\node[very thick,blue] at  (23,0){$\bullet$};\node at  (24,0){$9$};
		\node at  (20,3){$\bullet$};\node at  (19,3){$6$};
		\node[blue] at  (20,0){$\bullet$};\node at  (20,-1){$7$};
		\node at  (17,3){$\bullet$};\node at  (17,2){$5$};
		\node at  (17,6){$\bullet$};\node at  (17,5){$3$};
		\node[blue] at  (14,3){$\bullet$};\node at  (14,2){$2$};
		\node[blue] at  (23,-3){$\bullet$};\node at  (23,-4){$10$};
	\end{tikzpicture}
	\caption{An example of $\rsw_{T}(8)=\widetilde\rsw_{\tilde{T}}(4)$.\label{trsw}}
\end{figure}
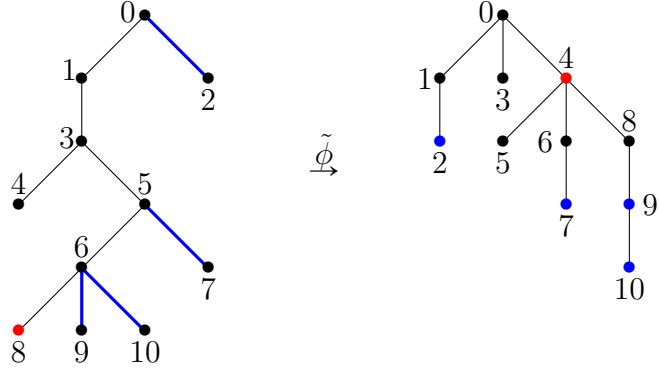

\begin{theorem}\label{thm:dlsw}
	Fix $T\in\P_n$ and let $\tilde{T}=\tilde{\phi}(T)$. For any node $v$ of $T$, if $v\leadsto u$, then 
	\begin{equation}\label{Statistics:four}
		(\lsw,\lev,\rsw,\widetilde\rsw)_{T}(v)=(\lev,\lsw,\widetilde\rsw,\rsw)_{\tilde T}(u).
	\end{equation}
\end{theorem}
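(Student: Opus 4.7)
The plan is to split the four-tuple identity \eqref{Statistics:four} into its two natural halves. The pair $\lsw_T(v)=\lev_{\tilde T}(u)$ and $\lev_T(v)=\lsw_{\tilde T}(u)$ is already Theorem~\ref{thm:lsw}. For the remaining pair, note that the relation $v\leadsto u$ is symmetric under the involution, i.e., $v\leadsto u$ in $T\to\tilde T$ if and only if $u\leadsto v$ in $\tilde T\to T$; this can be verified directly from the two defining conditions using the left--right symmetry of $\overbar{B}$ and $\overbar{\phi(B)}$. Hence it suffices to establish a single equation, say $\rsw_T(v)=\widetilde\rsw_{\tilde T}(u)$. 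By Lemma~\ref{rsw:drsw} we already know $\rsw_T(v)=\drsw_{\tilde T}(v)$, so the task reduces to proving the intrinsic identity $\drsw_{\tilde T}(v)=\widetilde\rsw_{\tilde T}(u)$ inside the single tree $\tilde T$.

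To prove this I would split along the two cases defining $\leadsto$. In Case B, $v$ is a leaf in $T$, and the direct description of $\tilde\phi$ shows that $v$ is a rightmost sibling in $\tilde T$ and $u$ is its parent there. Then $v$ has no closest younger sibling in $\tilde T$, the node $u$ is internal so $\widetilde\deg_{\tilde T}(u)=0$, and the set equality $\Lc_{\tilde T}(v)\cup\{v\}=\Lc_{\tilde T}(u)\cup\mathsf{Ch}_{\tilde T}(u)$ (obtained by decomposing the path from $v$ at the parent $u$) immediately gives $\drsw_{\tilde T}(v)=\widetilde\rsw_{\tilde T}(u)$. In Case A, $v$ has children $c_1,\ldots,c_k$ in $T$ listed from left to right and $u=c_k$; the direct description of $\tilde\phi$ forces $c_1$ to be the closest younger sibling of $v$ in $\tilde T$ and $c_1,c_2,\ldots,c_k$ to form an eldest-child chain terminating at the leaf $u=c_k$ in $\tilde T$.

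Still working inside $\tilde T$, the chain structure delivers $\Lc_{\tilde T}(u)\cup\mathsf{Ch}_{\tilde T}(u)=\Lc_{\tilde T}(v)\cup\{v\}$ and, since the first node on the path from $u$ to the root that has elder siblings is $c_1$, $\widetilde\deg_{\tilde T}(u)=k$. Feeding these into the two definitions, the identity $\drsw_{\tilde T}(v)=\widetilde\rsw_{\tilde T}(u)$ reduces to $1+\lchain_{\tilde T}(c_1)=k$, which is precisely Lemma~\ref{lsw:dlsw}. The main obstacle is the bookkeeping in Case A: verifying that $c_k$ really is a leaf in $\tilde T$, that the first node with elder siblings on the path from $u$ to the root is exactly $c_1$ (so that $\widetilde\deg_{\tilde T}(u)=k$), and that the root subcase $v=0$, where Lemma~\ref{lsw:dlsw} does not apply directly, is handled by the eldest-child chain $0,1,c_2,\ldots,c_k$ in $\tilde T$.
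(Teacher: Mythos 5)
Your proposal is correct and follows essentially the same route as the paper: reduce via Theorem~\ref{thm:lsw} and the symmetry of $\leadsto$ under $\tilde\phi$ to the single identity $\rsw_T(v)=\widetilde\rsw_{\tilde T}(u)$, convert it via Lemma~\ref{rsw:drsw} into a statement internal to $\tilde T$, and finish with a case analysis on whether $v$ is internal or a leaf. The paper merely packages this slightly differently, splitting off the root of $T$ as a separate (trivial) third case and phrasing your appeal to Lemma~\ref{lsw:dlsw} as the identity $\widetilde{\deg}_{\tilde T}(u)=\lchain_{\tilde T}(u')+1$ for the closest younger sibling $u'$ of $v$, but the underlying computations coincide with yours.
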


Before we prove Theorem~\ref{thm:dlsw}, we need the following lemma, which can be verified from the bijection $\varphi$ and the construction of the complement of binary trees.
\begin{lemma}\label{dirc}
For any node $v$ of a plane tree $T$ such that $v\leadsto u$, the node $u$ can be determined in $T$ according to the following three cases. 
\begin{itemize}
\item If $v$ is an internal node, then $u$ is the youngest child of $v$.
\item If $v$ is a leaf and no nodes in the path from $v$ to the root has elder siblings, then $u$ is the root $0$.
\item If $v$ is a leaf and $w$ is the first node that has elder siblings in the path from $v$ to the root, then $u$ is the closest elder sibling of $w$. 
\end{itemize}
\end{lemma}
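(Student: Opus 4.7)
The plan is to unwind the definition of $v\leadsto u$, which says that either (A) $v'$ is the right child of $u$ in $\overbar{B}$ with $B=\varphi(T)$, or (B) $u'$ is the right child of $v$ in $\overbar{\phi(B)}$. From the defining rules of $\varphi$, a node $v$ of $T$ has a left child in $B$ iff $v$ has a leftmost child in $T$, and the label $v'$ appears as a supplied right child somewhere in $\overbar{B}$ iff $v$ has at least one child in $T$. Hence $v'$ sits as a supplied left child of $v$ itself when $v$ is a leaf in $T$, and as a supplied right child of some other node when $v$ is internal in $T$. This splits the two clauses cleanly: (A) is active precisely when $v$ is internal, while (B) is active precisely when $v$ is a leaf.

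For the first bullet, clause (A) directly pinpoints $u$: the label $v'$ is the supplied right child of the unique node $a$ in $B$ that has no right child in $B$ (equivalently, no closest younger sibling in $T$) and has $v$ as its father in $T$. So $u=a$ is the youngest child of $v$ in $T$.

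For the second and third bullets I would exploit clause (B). By the labeling rule applied to $\overbar{\phi(B)}$, $u$ is the father of $v$ in the plane tree $\varphi^{-1}(\phi(B))=\tilde\phi(T)$. To express this in terms of $T$ alone, I would use the general recipe for recovering parents through $\varphi^{-1}$: the father of any node $x$ in $\varphi^{-1}(B')$ is found by walking up from $x$ in $B'$ through right-child edges until reaching a node $y$ that is either a left child or the root of $B'$, and then taking the parent of $y$ in $B'$ (or $0$ if $y$ is the root). Applying this to $B'=\phi(B)$ and translating via the mirror, the walk becomes: walk up from $v$ in $B$ through \emph{left}-child edges, stopping at the first node that is either a right child in $B$ or the root of $B$. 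Now a node is a left child in $B$ iff it has no elder sibling in $T$, and for such a node its $B$-parent coincides with its $T$-parent; so this $B$-walk is literally the walk up in $T$ from $v$ along ancestors that have no elder siblings. If the walk first meets an ancestor $w$ having an elder sibling, then $w$ is a right child in $B$ of its closest elder sibling in $T$, and that sibling is $u$, giving the third bullet; otherwise the walk reaches the root $1$ of $B$, and the labeling rule produces $u=0$, giving the second bullet. The main obstacle is nothing deep but bookkeeping: one has to keep straight that the labels in $\overbar{\phi(B)}$ reference $\tilde\phi(T)$ and that left/right are swapped under $\phi$, after which everything reduces to a direct reading of $\varphi$ and the labeling convention.
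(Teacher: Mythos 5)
Your proposal is correct and follows the route the paper intends: the paper merely asserts that the lemma ``can be verified from the bijection $\varphi$ and the construction of the complement,'' and your argument carries out exactly that verification, with the leaf case amounting to the same computation as the paper's own ``direct description of $\tilde\phi$'' from Section~2. The dichotomy (clause (A) active iff $v$ is internal, clause (B) iff $v$ is a leaf), the identification of $u$ as the youngest child in the internal case, and the walk up through left edges of $B$ (equivalently, through ancestors without elder siblings in $T$) in the leaf case all check out.
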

We are now ready to prove  Theorem~\ref{thm:dlsw}. 
\begin{proof}[{\bf Proof of Theorem~\ref{thm:dlsw}}]
	As $v\leadsto u$ in $T$ is equivalent to $u\leadsto v$ in $\tilde{T}$ and $\tilde\phi$ is an involution, we only need to show
$\rsw_{T}(v)=\widetilde\rsw_{\tilde{T}}(u)$ in view of Theorem~\ref{thm:lsw}. We distinguish the following three cases.

\begin{enumerate}
\item  If $v$ is an internal node other than the root of $T$, then $u$ is a leaf in $\tilde T$.  By Lemma~\ref{dirc}, $u$ is  the youngest child of $v$ in $T$ and dually, if $u'$ is the first node that has elder siblings in the path from $u$ to the root in $\tilde T$, then $v\in\Lc_{\tilde{T}}(u)$  is the closest elder sibling of $u'$ in $\tilde T$. It then follows that $\widetilde{\deg}_{\tilde{T}}(u)=\lchain_{\tilde{T}}(u')+1$. By Lemma~\ref{rsw:drsw} and the relationship between $u$ and $v$ in $\tilde{T}$, we have
	\begin{align*}
	\rsw_{T}(v) &= \drsw_{\tilde{T}}(v)\\
				&=1+ \sum_{w\in\Lc_{\tilde{T}}(v)\cup\{v,u'\}}\lchain_{\tilde{T}}(w)\\
				&= \sum_{w\in\Lc_{\tilde{T}}(u)}\lchain_{\tilde{T}}(w)+\widetilde{\deg}_{\tilde{T}}(u)\\
				&= \widetilde\rsw_{\tilde{T}}(u).
	\end{align*}
\item If $v$ is the root of $T$, then $\rsw_{T}(v)=\deg_{T}(v)=\widetilde{\deg}_{\tilde{T}}(u)=\widetilde\rsw_{\tilde{T}}(u)$. 
	\item If $v$ is a leaf of $T$, then $v$ is  the youngest child of $u$ in $\tilde{T}$ according to Lemma~\ref{dirc}. By Lemma~\ref{rsw:drsw} and the fact that $u$ is an internal node in $\tilde T$, we have 
		\begin{align*}
		\rsw_{T}(v) &= \drsw_{\tilde{T}}(v)\\
		&= \sum_{w\in\Lc_{\tilde{T}}(v)\cup\{v\}}\lchain_{\tilde{T}}(w)\\
		&= \sum_{w\in\Lc_{\tilde T}(u)\cup\mathsf{Ch}_{\tilde T}(u)}\lchain_{\tilde T}(w)\\
		&= \widetilde\rsw_{\tilde{T}}(u).
	\end{align*}
	\end{enumerate}
The proof of this theorem is complete. 	
\end{proof}

\begin{remark}
The four new tree statistics $(\dlev,\dlsw,\drsw,\widetilde\rsw)$ can be generalized to weakly increasing trees, as well as Theorems~\ref{thm:new three} and~\ref{thm:dlsw}.
\end{remark}

\section{A new involution on binary trees, generalizations and applications}
\label{sec:3}
In this section, we construct a new involution on binary trees, which can be generalized naturally to di-sk trees and rooted labeled trees. The description of this  involution is simple but could not be found in the website of \href{https://www.findstat.org/MapsDatabase/}{FindStat}~\cite{FS}.  This involution answers Problem~\ref{prob:bij} and leads to a new statistic whose distribution over binary trees or plane trees gives the {\em Catalan's triangle~\cite{Cai}} (also known as {\em ballot numbers}) $C_{n,k}=\frac{k}{2n-k}{2n-k\choose n}$. Moreover, a quadruple equidistribution on plane trees involving this new statistic is proved via a recursive bijection. 

For a binary tree (labeled or unlabeled) $B$, let $\spi(B)$ be the  {\em length of the spine} of $B$, which equals the  smallest $i$ such that the $(i+1)$-th node under the {\em preorder} is a right child.  Let $\rspi(B)$ be the smallest $i$ such that the $(i+1)$-th node under the {\em reverse preorder} has a right child. By convention, if $B$ is the special  binary tree with only left edges that has $n$ nodes, then define $\spi(B)=\rspi(B)=n$. As an example, for the first binary tree in  Fig.~\ref{invo2}, we have  $\spi(B)=3$ (counts the three nodes in blue) and $\rspi(B)=4$ (counts the four nodes in red). 

Given a binary tree, its {\em right chain} is any maximal path composed of only right edges. For instance, the first binary tree in Fig.~\ref{invo2} has $9$ right chains, which are $v_1-v_{10}-v_{11}-v_{12}-v_{17}$, $v_2-v_4-v_8$, $v_{13}-v_{15}$, $v_3$, $v_5$, $v_9$, $v_{14}$, $v_{16}$ and $v_6-v_7$. Right chains of binary trees was introduced in the study of multiset Schett polynomials~\cite{LLWZ,LM}. The length of a right chain is the number of right edges in the chain. The multiset of the lengths of all right chains of a binary tree $B$ is called the {\em right chain sequence of $B$}.

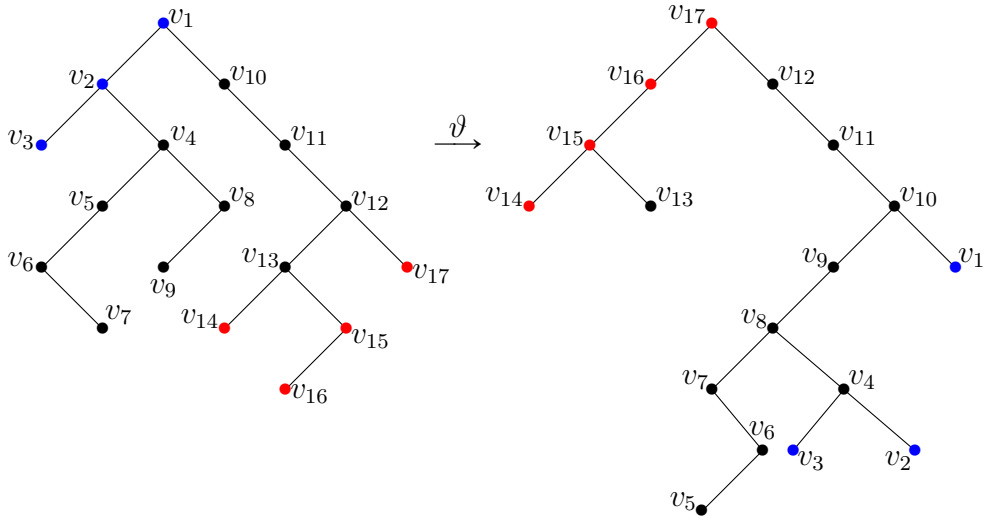
\begin{figure}
\centering
\begin{tikzpicture}[scale=0.27]

\draw[-](11,9) to (8,6);
\draw[-](11,9) to (17,3);\draw[-](8,6) to (14,0);\draw[-](8,6) to (5,3);
\draw[-](11,-3) to (14,0);\draw[-](11,3) to (5,-3);\draw[-](8,-6) to (5,-3);
\draw[-](23,-3) to (17,3);\draw[-](20,0) to (14,-6);\draw[-](17,-3) to (20,-6);
\draw[-](17,-9) to (20,-6);
\node at  (17,-9){\red{$\bullet$}};\node at  (18.2,-9.2){$v_{16}$};
\node at  (20,-6){\red{$\bullet$}};\node at  (21.2,-6.5){$v_{15}$};
\node at  (14,-6){\red{$\bullet$}};\node at  (12.8,-5.5){$v_{14}$};
\node at  (17,-3){$\bullet$};\node at  (15.8,-2.5){$v_{13}$};
\node at  (20,0){$\bullet$};\node at  (21.2,0.3){$v_{12}$};
\node at  (23,-3){\red{$\bullet$}};\node at  (24.2,-3.3){$v_{17}$};
\node at  (5,3){\blue{$\bullet$}};\node at  (4,3.3){$v_3$};
\node at  (11,9){\blue{$\bullet$}};\node at  (11.9,9.3){$v_1$};
\node at  (8,6){\blue{$\bullet$}};\node at  (7,6.3){$v_2$};
\node at  (14,6){$\bullet$};\node at  (15.2,6.5){$v_{10}$};
\node at  (17,3){$\bullet$};\node at  (18.2,3.5){$v_{11}$};
\node at  (11,3){$\bullet$};\node at  (12,3.5){$v_4$};
\node at  (14,0){$\bullet$};\node at  (14.9,0.5){$v_8$};
\node at  (11,-3){$\bullet$};\node at  (11,-4){$v_9$};
\node at  (8,0){$\bullet$};\node at  (7,0.3){$v_5$};
\node at  (5,-3){$\bullet$};\node at  (4,-2.7){$v_6$};
\node at  (8,-6){$\bullet$};\node at  (8.8,-5.3){$v_7$};

\node at  (25.5,3){$\longrightarrow$};\node at  (25.5,4){$\vartheta$};
\draw[-](38,9) to (29,0);\draw[-](32,3) to (35,0);
\draw[-](38,9) to (50,-3);\draw[-](47,0) to (38,-9);
\draw[-](40.5,-12) to (38,-9);\draw[-](41,-6) to (48,-12);\draw[-](44.5,-9) to (42,-12);
\draw[-](40.5,-12) to (37.5,-15);
\node at  (48,-12){\blue{$\bullet$}};\node at  (47.2,-12.5){$v_2$};
\node at  (42,-12){\blue{$\bullet$}};\node at  (42.9,-12.5){$v_3$};
\node at  (44.5,-9){$\bullet$};\node at  (45.4,-8.5){$v_4$};
\node at  (40.5,-12){$\bullet$};\node at  (40.5,-11){$v_6$};
\node at  (37.5,-15){$\bullet$};\node at  (36.6,-14.5){$v_5$};
\node at  (44,-3){$\bullet$};\node at  (43.1,-2.5){$v_9$};
\node at  (41,-6){$\bullet$};\node at  (40.1,-5.5){$v_8$};
\node at  (38,-9){$\bullet$};\node at  (37.2,-8.5){$v_7$};
\node at  (41,6){$\bullet$};\node at  (42.2,6.5){$v_{12}$};
\node at  (44,3){$\bullet$};\node at  (45.2,3.5){$v_{11}$};
\node at  (47,0){$\bullet$};\node at  (48.2,0.5){$v_{10}$};
\node at  (50,-3){\blue{$\bullet$}};\node at  (50.8,-2.5){$v_1$};
\node at  (35,0){$\bullet$};\node at  (36.2,0.5){$v_{13}$};
\node at  (38,9){\red{$\bullet$}};\node at  (36.8,9.5){$v_{17}$};
\node at  (35,6){\red{$\bullet$}};\node at  (33.8,6.5){$v_{16}$};
\node at  (32,3){\red{$\bullet$}};\node at  (30.8,3.5){$v_{15}$};
\node at  (29,0){\red{$\bullet$}};\node at  (27.8,0.5){$v_{14}$};
\end{tikzpicture}
\caption{An example of the involution $\vartheta$.\label{invo2}}
\end{figure}

Let $B\in\B_n$ be a binary tree.  We aim to construct an involution $\vartheta: \B_n\rightarrow\B_n$ that switches  the statistics ``$\spi$'' and ``$\rspi$'' as follows. Under the preorder, suppose that the $i$-th node of $B$ is $v_i$. Then,  the root of $\vartheta(B)$ is $v_n$ and for $i$ from $n-1$ down to $1$, 
\begin{itemize}
\item if  $v_i$ has right child $v_{\ell}$ for $\ell>i$ in $B$, then set $v_i$ to be the right child of $v_{\ell}$ in $\vartheta(B)$;
\item otherwise, $v_i$ has no right child  in $B$, then set $v_i$ to be the left child of  $v_{i+1}$ in $\vartheta(B)$. 
\end{itemize}
See Fig.~\ref{invo2} for an example of $\vartheta$. 
It is not immediately from the above simple construction that $\vartheta: \B_n\rightarrow\B_n$ is an involution. 
\begin{theorem}\label{thm:new}
The mapping $\vartheta: \B_n\rightarrow\B_n$ is an involution preserving  the right chain sequences and switching  the statistics ``$\spi$'' and ``$\rspi$''. 
\end{theorem}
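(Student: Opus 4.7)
The plan is to first establish a key lemma: if $B$ has preorder $v_1,v_2,\dots,v_n$, then $\vartheta(B)$ has preorder $v_n,v_{n-1},\dots,v_1$, i.e.\ the preorder of $\vartheta(B)$ is the reverse of the preorder of $B$. All three claims of the theorem will then follow quickly from this lemma.

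I would prove the lemma by induction on $n$, decomposing $B$ into its root $v_1$, its left subtree $L$ (possibly empty), and its right subtree $R$ (possibly empty). The critical observation is that when the construction of $\vartheta$ is restricted to the nodes of $L$, it coincides with $\vartheta(L)$, and similarly for $R$, because the conditions on right children used in the construction are the same whether tested inside a subtree or inside the full tree $B$. The only extra ingredients in $\vartheta(B)$ are the root $v_1$, which becomes the right child of the root of $R$ when $R$ is non-empty and the left child of the root of $L$ otherwise, and (when both $L$ and $R$ are non-empty) the last preorder node $v_{a+1}$ of $L$, which must be a leaf in $B$ since its entire subtree lies in $L$, and therefore attaches as the left child of the root $v_{a+2}$ of $R$ in $\vartheta(B)$. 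Using the standard fact that the last node in any preorder traversal is a leaf, one checks that in $\vartheta(R)$ the node $v_{a+2}$ is a leaf, so gluing the inductively-reversed preorders of $\vartheta(L)$ and $\vartheta(R)$ together with $v_1$ inserted at the end recovers precisely the reversed preorder of $B$.

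Given the preorder lemma, the involution property $\vartheta(\vartheta(B))=B$ follows by relabeling the preorder of $\vartheta(B)$ as $u_i=v_{n+1-i}$ and checking the two branches of the construction: a right edge in $B$ is reversed once in $\vartheta(B)$ and once more in $\vartheta(\vartheta(B))$, while the absence of a right edge below $v_{n+1-i}$ in $B$ translates, via the lemma, into the absence of a right child of $u_i$ in $\vartheta(B)$, so the construction places $u_i$ as the left child of $u_{i+1}=v_{n-i}$, matching the fact that any non-root, non-right-child node $v_{n+1-i}$ in $B$ is the left child of its preorder predecessor $v_{n-i}$. For the right chain sequences, each maximal right chain $w_0\to w_1\to\cdots\to w_k$ of $B$ is converted by $\vartheta$ into the reversed right chain $w_k\to w_{k-1}\to\cdots\to w_0$ of $\vartheta(B)$; maximality is preserved because $w_0$ (not a right child in $B$) has no right child in $\vartheta(B)$, and $w_k$ (with no right child in $B$) becomes a left child or the root of $\vartheta(B)$, hence not a right child.

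For the swap $\spi(\vartheta(B))=\rspi(B)$, I would trace the left spine of $\vartheta(B)$ directly: it starts at the root $v_n$ and extends through $v_{n-1}, v_{n-2},\dots$ for exactly as many steps as there are consecutive indices (starting from $n$ and going downwards) whose nodes have no right child in $B$, which is precisely the defining quantity $\rspi(B)$, with the all-left-chain convention handled as a separate base case. The companion identity $\rspi(\vartheta(B))=\spi(B)$ then drops out by applying this to $\vartheta(B)$ in place of $B$ together with the involution property. The main technical obstacle I anticipate is the bookkeeping in the last case of the induction, where both $L$ and $R$ are non-empty, and one must carefully verify that attaching $v_{a+1}$ and $v_1$ to the boundary node $v_{a+2}$ does not clash with the inductively-built structures of $\vartheta(L)$ and $\vartheta(R)$; this ultimately boils down to the facts that $v_{a+2}$ has no left child in $\vartheta(R)$ (because it plays the role of the ``first preorder node'' in $\vartheta(R)$'s construction) and that $v_1$ is a leaf in $\vartheta(B)$.
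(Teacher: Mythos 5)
Your proposal is correct and follows essentially the same route as the paper: the published proof likewise rests on the key structural fact that $v_i$ becomes the $i$-th node of $\vartheta(B)$ under the reverse preorder (your preorder-reversal lemma), from which the reversal of right edges, the preservation of left edges under $\vartheta^2$, the invariance of the right chain sequence, and the exchange of ``$\spi$'' and ``$\rspi$'' are deduced exactly as you describe. The only difference is that the paper asserts this fact directly from the construction (via an equivalent ``reverse preorder'' description of $\vartheta$), whereas you supply an inductive proof of it through the root/left-subtree/right-subtree decomposition --- added detail rather than a different argument.
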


\begin{proof}
Since the node $v_i$ of $B$ is the $(n+1-i)$-th node under the reverse preorder, the mapping $\vartheta: \B_n\rightarrow\B_n$ admits an alternative description. Under the reverse preorder, suppose that the $i$-th node of $B$ is $u_i$ (note that $u_i=v_{n+1-i}$). Then,  the root of $\vartheta(B)$ is $u_1$ and for $i$ from $2$ up to $n$, 
\begin{itemize}
\item if  $u_i$ has right child $u_{\ell}$ for $\ell<i$ in $B$, then set $u_i$ to be the right child of $u_{\ell}$ in $\vartheta(B)$;
\item otherwise, $u_i$ has no right child  in $B$, then set $u_i$ to be the left child of  $u_{i-1}$ in $\vartheta(B)$. 
\end{itemize}
It is clear from the above construction that $\rspi(B)=\spi(\vartheta(B))$. Thus, it remains to show that $\vartheta$ is an involution preserving  the right chain sequences. 

Note that a binary tree is completely determined by its left edges and right edges. Thus, it suffices to show that each edge of $B$ is  preserved  under $\vartheta^2$. By the construction of $\vartheta$, the node $v_i$ becomes the $i$-th node of $\vartheta(B)$ under the reverse preorder. If $v_j$ is the right child of $v_i$ (for some $i<j$) in $B$, then $v_i$ becomes the  right child of $v_j$ in $\vartheta(B)$ and so $v_j$ remains  the right child of $v_i$ in $\vartheta^2(B)$. In other words, the right edges are preserved under $\vartheta^2$. It also follows that $B$ and $\vartheta(B)$ have the same  right chain sequence. To see that  the left edges are preserved under $\vartheta^2$, suppose that $v_{i+1}$ is the left child of $v_i$ (for some $i$) in $B$. Note that  $v_{i+1}$ has no right child in $\vartheta(B)$, for otherwise   $v_{i+1}$ would be the right child of its parent in $B$, which contradicts with the fact that $v_{i+1}$ is the left child of $v_i$ in $B$. As $v_i$ and $v_{i+1}$ are respectively the $i$-th node and the $(i+1)$-th node in  $\vartheta(B)$ under the reverse preorder, it then follows from the alternative description  of $\vartheta$ that $v_{i+1}$ remains the left child of $v_i$ in $\vartheta^2(B)$.
Therefore, $\vartheta^2(B)=B$ and so $\vartheta$ is an involution on $\B_n$, as desired.  
\end{proof}

\begin{remark}
As the involution $\vartheta$ preserves the right edges of binary trees and di-sk trees are just binary trees labeled by 
$\oplus$ and $\ominus$ satisfying the right chain condition, $\vartheta$ is generalized naturally to di-sk trees which preserves the number of $\ominus$-nodes and switches the two statistics ``$\top$'' and ``$\rpop$''. 
This answers Problem~\ref{prob:bij}. 
\end{remark}

The  Catalan's triangle $C_{n,k}=\frac{k}{2n-k}{2n-k\choose n}$ has many known combinatorial interpretations in the literature (see~\cite{Cai,LL} and the references therein), among which is the enumeration of plane trees by the {\em length of  left arms}. 
The left chain staring from the root of $T$ is called the {\em left arm of $T$} and we denote by $\larm(T)$ the length of the left arm of $T$. 
Symmetrically, we define the {\em right arm of $T$} and $\rarm(T)$ the length  of the right arm of $T$.  In other words, $\rarm(T)$ is  the length of the left arm of the mirror symmetry of $T$. As an example, for the tree $T$ in Fig.~\ref{plan:preorder}, we have $\larm(T)=2$ and $\rarm(T)=3$. Denote by $\rev(T)$ the number of nodes before the first node (under the reverse preorder) that has younger sibling. By convention, if $T$ is the path with $n$ edges, then define $\rev(T)=n$. For example, if $T$ is the tree  in Fig.~\ref{plan:preorder}, then the node $13$ is the first node (under the reverse preorder) that has younger sibling and so $\rev(T)=5$ (counts the five nodes in blue).

The {\em degree of a node} in a plane tree is the number of its children. The multiset of the degrees of all internal  nodes of a plane tree $T$ is called the {\em degree sequence of $T$}.  The involution $\tilde\vartheta:\P_n\rightarrow\P_n$ defined by $\tilde\vartheta=\varphi^{-1}\circ\vartheta\circ\varphi$ has the following interesting property.  
\begin{proposition}\label{sym:rev}
The involution $\tilde\vartheta:\P_n\rightarrow\P_n$ preserves the degree sequences and the statistic ``$\rarm$'' but switches the statistics ``$\larm$'' and ``$\rev$''. In particular, ``$\rev$'' is a new tree statistic that interprets the Catalan triangle $C_{n,k}$. 
\end{proposition}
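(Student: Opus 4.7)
The plan is to translate each of the four assertions in Proposition~\ref{sym:rev} into a statement about $\vartheta$ on binary trees through the bijection $\varphi$, and then to invoke Theorem~\ref{thm:new} together with one additional observation about paths under $\vartheta$.

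First I would record four correspondences under $\varphi$. Since $\varphi$ sends ``leftmost child'' to ``left child'', the chain of leftmost children starting at the root of $T$ maps onto the left spine of $\varphi(T)$, giving $\larm(T)=\spi(\varphi(T))$. Since $\varphi$ sends ``closest younger sibling of $v$'' to ``right child of $v$'', a non-root node of $T$ has a younger sibling if and only if it has a right child in $\varphi(T)$; as reverse preorder on $T$ is defined through $\varphi$, this yields $\rev(T)=\rspi(\varphi(T))$. The children $c_{1},\dots,c_{d}$ of any internal node of $T$ form a maximal right chain of length $d-1$ in $\varphi(T)$, and every maximal right chain of $\varphi(T)$ arises this way, so the degree sequence of $T$ and the right chain sequence of $\varphi(T)$ determine each other. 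Finally, if the right arm of $T$ is $u_{0},u_{1},\dots,u_{r}$, then in $\varphi(T)$ the path from the root that goes right when possible and left otherwise traces a right chain ending at $u_{1}$, then one left edge to the leftmost child of $u_{1}$, then a right chain ending at $u_{2}$, and so on, terminating at the leaf $u_{r}$, which is the first node under reverse preorder of $\varphi(T)$. This path uses exactly $r-1$ left edges, giving $\rarm(T)=1+\ell(\varphi(T))$, where $\ell(B)$ denotes the number of left edges on the path from the root of $B$ to its first node under reverse preorder.

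With these correspondences, Theorem~\ref{thm:new} immediately yields the first three assertions: preservation of the right chain sequence of $\varphi(T)$ translates to preservation of the degree sequence of $T$, and the interchange of $\spi$ and $\rspi$ translates to the interchange of $\larm$ and $\rev$. The main obstacle is the invariance of $\rarm$, which I would handle by proving $\ell(\vartheta(B))=\ell(B)$. Let $w_{0},w_{1},\dots,w_{p}$ be the path from the root of $B$ to its first node under reverse preorder, so each $w_{i+1}$ is the right child of $w_{i}$ when that exists and the left child of $w_{i}$ otherwise. If $w_{i+1}$ is the right child of $w_{i}$ in $B$, the construction of $\vartheta$ places $w_{i}$ as the right child of $w_{i+1}$ in $\vartheta(B)$. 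If instead $w_{i+1}$ is the left child of $w_{i}$ (so $w_{i}$ has no right child in $B$), then $w_{i+1}$ is the very next node after $w_{i}$ in the preorder of $B$, and the construction of $\vartheta$ places $w_{i}$ as the left child of $w_{i+1}$ in $\vartheta(B)$. Thus each edge of the path survives into $\vartheta(B)$ with identical left/right type, only with parent and child swapped. As observed in the proof of Theorem~\ref{thm:new}, the reverse preorder of $\vartheta(B)$ reads the preorder of $B$, so the first node under reverse preorder of $\vartheta(B)$ equals $w_{0}$ and the root of $\vartheta(B)$ equals $w_{p}$; by uniqueness the reversed sequence $w_{p},w_{p-1},\dots,w_{0}$ is the corresponding path in $\vartheta(B)$, and it contains the same number of left edges as the original. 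Hence $\ell(\vartheta(B))=\ell(B)$ and $\rarm(\tilde\vartheta(T))=\rarm(T)$.

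For the last sentence of the proposition I would invoke the classical fact that $\larm$ is distributed on $\P_{n}$ according to the Catalan triangle $C_{n,k}$ (see~\cite{Cai,LL}). Since $\tilde\vartheta$ is an involution on $\P_{n}$ interchanging $\larm$ and $\rev$, the statistic $\rev$ is equidistributed with $\larm$ and therefore also enumerates the entries of $C_{n,k}$.
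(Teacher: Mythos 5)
Your proposal is correct and follows essentially the same route as the paper: it establishes the translation of $(\larm,\rev,\rarm,\text{degree sequence})$ under $\varphi$ into $(\spi,\rspi,\lrb+1,\text{right chain sequence})$ (the paper's Lemma~\ref{lem:var}, noting that your path to the first node under reverse preorder is exactly the paper's right boundary), proves that $\vartheta$ preserves the number of left edges on that path by the same edge-by-edge reversal argument as the paper's subsequent lemma, and then invokes Theorem~\ref{thm:new}. Your version is slightly more careful in two spots --- identifying the endpoints of the reversed path as the root and first reverse-preorder node of $\vartheta(B)$, and saying the degree sequence and right chain sequence ``determine each other'' (they differ by a shift of one) rather than claiming equality --- but the substance is identical.
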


The proof of Proposition~\ref{sym:rev} is decomposed into the following two lemmas. For a binary tree $B$, the path starting from the root and ending at the last node (under preorder) is called the {\em right boundary of $B$}. Let $\lrb(B)$ be the number of {\bf l}eft edges in the {\bf r}ight {\bf b}oundary of $B$. Then, the following property of $\varphi$ can be checked routinely. 
\begin{lemma}\label{lem:var}
For any $T\in\P_n$, we have 
$$\larm(T)=\spi(\varphi(T)),\quad\rev(T)=\rspi(\varphi(T))\quad \text{and}\quad \rarm(T)=\lrb(\varphi(T))+1.
$$ 
Moreover, the degree sequence of $T$ equals the right chain sequence of $\varphi(T)$. 
\end{lemma}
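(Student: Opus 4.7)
The plan is to verify each of the four assertions directly from the defining rules of $\varphi$: a left edge of $\varphi(T)$ joins a node to its leftmost child in $T$, a right edge of $\varphi(T)$ joins a node to its closest younger sibling in $T$, and the root of $\varphi(T)$ is the node $1$, i.e., the leftmost child of the root $0$ of $T$. Thus left edges of $\varphi(T)$ encode ``leftmost-child'' relations while right edges encode ``closest-younger-sibling'' relations of $T$.

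For $\larm(T)=\spi(\varphi(T))$, writing the left arm of $T$ as $0,1,a_2,\ldots,a_k$ with each $a_{i+1}$ the leftmost child of $a_i$, the sequence $1,a_2,\ldots,a_k$ is exactly the left spine of $\varphi(T)$, so $\spi(\varphi(T))=k=\larm(T)$. For $\rev(T)=\rspi(\varphi(T))$, the right-edge rule shows that a non-root node $v$ of $T$ has a younger sibling in $T$ if and only if $v$ has a right child in $\varphi(T)$; since the reverse preorder on $T$ is defined to coincide with that on $\varphi(T)$, the first node (in reverse preorder) carrying the respective property occupies the same position in either tree, and the claim is immediate (the degenerate case where $T$ is a path matches the convention that a left-only binary tree has $\rspi$ equal to its number of nodes).

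For $\rarm(T)=\lrb(\varphi(T))+1$, I would trace the right boundary of $\varphi(T)$ step by step. Starting at the root $1$, a maximal initial stretch of right edges walks through the children of $0$ left to right and ends at the rightmost child $r_1$ of $0$ in $T$; since $r_1$ has no younger sibling it has no right child in $\varphi(T)$, so the boundary takes a single left edge into the leftmost child of $r_1$ and then again marches right through the remaining children of $r_1$ to $r_2$. Iterating this zigzag along the right arm $r_1,\ldots,r_k$ of $T$ contributes exactly one left edge for each of $r_1,\ldots,r_{k-1}$ (and none at $r_k$, where the boundary terminates), giving $\lrb(\varphi(T))=k-1$. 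Finally, for the degree/chain correspondence, the right-edge rule shows that every maximal right-edge path of $\varphi(T)$ has the form $c_1\to c_2\to\cdots\to c_d$, where $c_1,\ldots,c_d$ is the left-to-right list of all children of some internal node $p$ of $T$: maximality at the top forces $c_1$ to be the leftmost child of $p$ (identifying the root $1$ of $\varphi(T)$ with the leftmost child of $0$ when $p=0$), and maximality at the bottom forces $c_d$ to be the rightmost child. This sets up a bijection between internal nodes of $T$ and right chains of $\varphi(T)$ matching degrees with chain sizes, yielding the last assertion. The only mild obstacle is the right-boundary step, where one must carefully track how right and left edges alternate so as not to miscount $\lrb$ by one; all other parts are immediate from the defining rules of $\varphi$.
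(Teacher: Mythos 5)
Your verification is correct and is precisely the routine check the paper has in mind: the paper offers no proof beyond asserting that the lemma ``can be checked routinely'' from the construction of $\varphi$, and your case-by-case tracing of left edges (leftmost-child relations), right edges (closest-younger-sibling relations), the zigzag right boundary, and the bijection between internal nodes and right chains supplies exactly those details. One small point: your last step matches $\deg_T(p)$ with the number of \emph{nodes} of the corresponding right chain, whereas the paper formally defines the length of a right chain as its number of right edges (which would give $\deg_T(p)-1$); the node-count reading is the one under which the stated equality of multisets is literally true, so your convention is the right one, but it is worth flagging the discrepancy with the paper's definition.
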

Combining Theorem~\ref{thm:new} and Lemma~\ref{lem:var} with the following lemma  proves Proposition~\ref{sym:rev}.

\begin{lemma}
For any $B\in\B_n$, we have $\lrb(B)=\lrb(\vartheta(B))$. 
\end{lemma}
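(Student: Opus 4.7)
The plan is to trace the right boundary of $\vartheta(B)$ node by node and identify it with the reverse of the right boundary of $B$, with the sequence of left/right edge types preserved. Since $\lrb$ counts only the left edges along the right boundary and is indifferent to the order in which they appear, this identification will immediately yield $\lrb(B)=\lrb(\vartheta(B))$.

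First I would record the right boundary of $B$ as a path $u_0=v_1, u_1,\ldots,u_m=v_n$ whose edges $e_1,\ldots,e_m$ are labeled \emph{right} or \emph{left} according to whether $u_i$ is the right or the left child of $u_{i-1}$ in $B$. The small observation to isolate from the preorder traversal is that whenever $e_i$ is a left edge, $u_{i-1}$ has no right child in $B$, so $u_i$ (its only child) is visited immediately after $u_{i-1}$; in preorder indices this says $u_i = v_{j+1}$ whenever $u_{i-1}=v_j$.

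Next I would perform a downward induction on $i$ from $m$ to $0$ to show that the right boundary of $\vartheta(B)$ passes through $u_m, u_{m-1},\ldots, u_0$ in that order, with the edge from $u_i$ to $u_{i-1}$ carrying the same label as $e_i$. The base case uses that the root of $\vartheta(B)$ is $v_n=u_m$ by construction. In the inductive step I compare according to the label of $e_i$: if $e_i$ is right then $u_i$ is the right child of $u_{i-1}$ in $B$, so the first rule defining $\vartheta$ makes $u_{i-1}$ the right child of $u_i$ in $\vartheta(B)$; if $e_i$ is left, then $u_i$ is not anyone's right child in $B$ and therefore has no right child in $\vartheta(B)$, while the preorder observation combined with the second rule defining $\vartheta$ makes $u_{i-1}$ the left child of $u_i$. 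A short direct check that $v_1$ has neither a right nor a left child in $\vartheta(B)$ closes the induction at $u_0$.

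The step that I expect to require the most care is this inductive step: in the ``right'' case I must exclude any rival candidate for the right child of $u_i$ in $\vartheta(B)$, and in the ``left'' case I must exclude any hidden right edge at $u_i$ in $\vartheta(B)$. Both concerns dissolve once one invokes the uniqueness of a node's parent in $B$, which forces $u_{i-1}$ to be the only possible right child of $u_i$ in $\vartheta(B)$ and shows that this right child exists precisely when $e_i$ is a right edge. Once the induction is complete, the sequence of left/right labels along the right boundary of $\vartheta(B)$ is exactly the reverse of that along the right boundary of $B$, so the counts of left labels coincide and the lemma follows.
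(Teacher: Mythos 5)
Your argument is correct and follows essentially the same route as the paper's proof: you trace the right boundary of $B$, observe that each left edge there forces the absence of a right child (and that consecutive preorder indices are involved), and conclude that $\vartheta$ reverses the boundary while preserving each edge's left/right type. The paper states this more tersely without the explicit induction, but the content is identical.
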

\begin{proof}
If $v$ is the left child of $u$ in the right boundary of  $B$, then $u$ has no right child in $B$. Thus, $u$ becomes  the left child of $v$ in the right boundary of  $\vartheta(B)$ (since $v$ has no right child). On the other hand, if $v$ is the right child of $u$ in the right boundary of  $B$, then obvisously  $u$ becomes the right child  of $v$ in the right boundary of  $\vartheta(B)$.  This proves $\lrb(B)=\lrb(\vartheta(B))$. 
\end{proof}

Proposition~\ref{sym:rev} implies that the refinement of Catalan numbers 
$$
C_n(x,y,z):=\sum_{T\in\P_n}x^{\larm(T)-1}y^{\rarm(T)-1} z^{\rev(T)-1}
$$
is symmetric in $x$ and $z$. In fact, $C_n(x,y,z)$ is a symmetric function in $x,y$ and $z$, as will be proved in next section. The first few values of $C_n(x,y,z)$ are listed as follows:
\begin{align*}
C_1(x,y,z)&=1,\\
C_2(x,y,z)&=1+xyz=1+m_{111},\\
C_3(x,y,z)&=1+xy+xz+yz+x^2y^2z^2=1+m_{11}+m_{222},\\
C_4(x,y,z)&=1+m_1+m_{11}+m_{22}+m_{211}+m_{333},\\
C_5(x,y,z)&=2+2m_{1}+m_{2}+2m_{11}+m_{21}+2m_{111}+m_{22}+m_{211}+m_{33}+m_{321}+m_{222}+m_{444},
\end{align*}
where $m_{\lambda}=m_{\lambda}(x,y,z)$ is the monomial symmetric function for a partition $\lambda$.

\begin{figure}
\centering
\begin{tikzpicture}[scale=0.32]
\draw[-](0,9) to (-3,6);\draw[-](0,9) to (0,6);\draw[-](0,9) to (9,0);
\draw[-](-3,6) to (-6,3);\draw[-](-3,6) to (0,3);\draw[-](0,3) to (0,0);
\draw[-](-3,6) to (-3,3);\draw[-](-3,0) to (-3,3);
\draw[-](-3,0) to (-5,-3);\draw[-](-3,0) to (-1,-3);\draw[-](1,0) to (1,-3);
\draw[-](3,6) to (3,3);\draw[-](3,3) to (1,0);\draw[-](3,3) to (5,0);\draw[-](5,-3) to (5,0);
\node at  (3,3){$\bullet$};\node at  (2,3.4){$12$};
\node at  (1,0){\red{$\bullet$}};\node at  (2,0){$13$};
\node at  (9,0){\blue{$\bullet$}};\node at  (10,0.4){$18$};
\node at  (1,-3){\blue{$\bullet$}};\node at  (2,-3){$14$};
\node at  (5,0){\blue{$\bullet$}};\node at  (6,0){$15$};
\node at  (5,-3){\blue{$\bullet$}};\node at  (6,-3){$16$};
\node at  (0,9){$\bullet$};\node at  (-0.8,9.2){$1$};
\node at  (-3,6){$\bullet$};\node at  (-3.6,6.4){$2$};
\node at  (0,6){$\bullet$};\node at  (-1,6){$10$};
\node at  (3,6){$\bullet$};\node at  (4,6.4){$11$};
\node at  (6,3){\blue{$\bullet$}};\node at  (7,3.4){$17$};
\node at  (-6,3){$\bullet$};\node at  (-6,2.2){$3$};
\node at  (0,3){$\bullet$};\node at  (-0.6,2.8){$8$};
\node at  (-3,3){$\bullet$};\node at  (-3.6,3){$4$};
\node at  (-3,0){$\bullet$};
\node at  (-5,-3){$\bullet$};\node at  (-5.8,-3){$6$};
\node at  (-1,-3){$\bullet$};\node at  (-1.8,-3){$7$};
\node at  (0,0){$\bullet$};\node at  (-0.6,-0.2){$9$};
\node at  (-3.5,0.5){$5$};

\node at  (14,3){$\longrightarrow$};
\node at  (14,3.8){$\tau$};


\draw[step=1,color=gray](18,-5) grid (35,12);
\draw[color=magenta](18,-5)--(35,12);
\draw[very thick](18,-5)--(21,-5)--(21,-4)--(24,-4)--(24,-2)--(25,-2)--(25,-1)--(27,-1)--(27,2)--(28,2)--(28,5)--(30,5)--(30,6)--(32,6)--(32,7)--(33,7)--(33,9)--(34,9)--(34,11)--(35,11)--(35,12);

\end{tikzpicture}
\caption{A plane tree labeled by its preorder and its associated Dyck path under $\tau$.\label{plan:preorder}}
\end{figure}
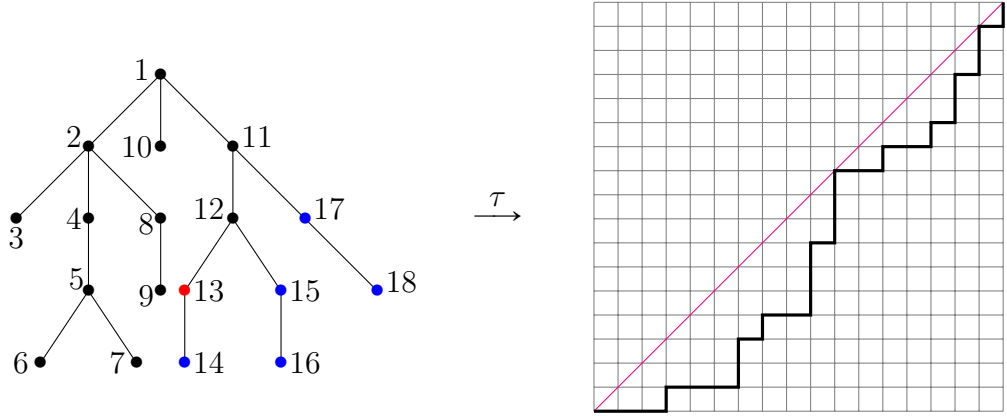

\subsection{A quadruple equidistribution on plane trees}
For a plane tree $T$, let $\run(T)$ be the number of non-leaf nodes after the last node (included) in preorder that has at least two children. By convention, if $T$ is the path with $n$ edges, then define $\run(T)=n$. For example, if $T$ is the tree in Fig.~\ref{plan:preorder}, then the $12$-th node is the last node (in preorder) that has at least two children and so $\run(T)=\#\{12,13,15,17\}=4$. This statistic originated from Duarte and Guedes de Oliveira's study~\cite{DG} of parking functions (see also~\cite{LL}). The main result in this section is the following  bijection on plane trees. 

\begin{theorem}\label{thm:psi}
There exists a bijection $\Psi: \P_n\rightarrow \P_n$ that preserves  the triple of statistics  
$(\larm,\rarm,\leaf)$ and transforms the statistic ``$\rev$'' to ``$\run$''. Consequently, $C_n(x,y,z)$ is a symmetric function in $x,y$ and $z$. 
\end{theorem}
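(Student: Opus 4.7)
The plan is to construct $\Psi$ by induction on $n$, with the case analysis organised around the rightmost subtree $T_k$ of the root of $T$. Writing the root of $T$ as attached to subtrees $T_1,\ldots,T_k$ from left to right, one first records how $\rev$ and $\run$ recurse through this decomposition. In the easy case where $T_k$ is not a path, both statistics descend into $T_k$ unchanged: $\rev(T)=\rev(T_k)$ and $\run(T)=\run(T_k)$. I would then set $\Psi(T)$ to be $T$ with $T_k$ replaced by $\Psi(T_k)$. Preservation of the triple $(\larm,\rarm,\leaf)$ is immediate, since $\larm$ depends only on the leftmost subtree $T_1$ while $\rarm$ and the leaf count descend through $T_k$; and $\rev(T)=\run(\Psi(T))$ follows directly from the induction hypothesis applied to $T_k$.

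The main obstacle is the case where $T_k$ (and possibly several of its left neighbours) is a path. Here the two recursions diverge: if $T_k$ is a path of $m$ nodes and $T_{k-1}$ is not, then
\[
\rev(T)=m+\rev(T_{k-1})\qquad\text{whereas}\qquad \run(T)=\run(T_{k-1})+m-1,
\]
so the naive recursion on $T_{k-1}$ is off by one; and when several of the rightmost subtrees are paths, the formula for $\rev(T)$ stabilises at $|T_k|+|T_{k-1}|-1$ while $\run(T)$ continues to accumulate contributions from the further path subtrees. My plan is to handle these cases by a local restructuring of the rightmost block of subtrees that absorbs the discrepancy: informally, one trades a single node between the rightmost path and the subtree $\Psi(T_{k-1})$ so that $\run$ shifts up by exactly the required amount, and in the fully path-heavy case one reshapes the entire right tail into a single long path of the correct total length before recursing. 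The verification that the chosen surgery preserves $(\larm,\rarm,\leaf)$, realises the correct recursive value of $\run$, and defines a genuine bijection (i.e.\ that the operation is reversible) is the technical heart of the argument and the step I expect to be most fragile.

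For the ``Consequently'' conclusion, the symmetry of $C_n(x,y,z)$ is deduced by combining $\Psi$ with the involution $\tilde\vartheta$ of Proposition~\ref{sym:rev}. By Proposition~\ref{sym:rev} the map $\tilde\vartheta$ preserves $\rarm$ (and the degree sequence, hence $\leaf$) and exchanges $\larm$ with $\rev$, so $C_n(x,y,z)=C_n(z,y,x)$, giving the $x\leftrightarrow z$ symmetry. Using $\Psi$ to rewrite the generating function as $C_n(x,y,z)=\sum_T x^{\larm-1}y^{\rarm-1}z^{\run-1}$, and then composing $\Psi$ with $\tilde\vartheta$ (and, where convenient, with the mirror involution on plane trees applied at a subtree level) yields the remaining pairwise symmetries of the exponent variables; the transpositions so obtained together generate the full $S_3$-action on $(x,y,z)$, and therefore $C_n(x,y,z)$ is a symmetric function in the three variables.
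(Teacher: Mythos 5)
Your overall strategy (recursion anchored at the right end of the tree, with an ad hoc repair when the rightmost branches are paths) points in the same general direction as the paper, but the proposal leaves unexecuted exactly the step that constitutes the bulk of the actual proof. The "local restructuring of the rightmost block" that is supposed to absorb the off-by-one discrepancy between $\rev$ and $\run$ is never specified, and you yourself flag it as the fragile step. In the paper this is Lemma~\ref{lem:rev}: an explicit bijection $\psi:\P_n^{(k,2)}\rightarrow\P_n^{(k+1,1)}$ preserving $(\rev,\leaf)$, whose construction requires a two-stage shift of branches followed by a five-way case analysis (cases a1), a2), b1), b2), b3)) together with a matching description of the inverse; this is paired with a second bijection $\eta$ (Lemma~\ref{lem:run}) preserving $(\run,\leaf,\bran)$ that is transported from the Li--Lin bijection on Dyck paths. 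The recursion for $\Psi$ then composes $\eta\circ\Psi\circ\psi^{-1}$ in the hard case $\rarm(T)=1$, $\larm(T)\geq 2$. Without an explicit, verifiably reversible surgery in the path-heavy case, there is no bijection, so the main claim is not proved.

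The derivation of the full symmetry of $C_n(x,y,z)$ also has a gap. The transposition you do establish, $x\leftrightarrow z$, comes from $\tilde\vartheta$ alone (it is already recorded right after Proposition~\ref{sym:rev} and does not use $\Psi$). To generate $S_3$ you need a second transposition, and the mirror involution does not supply one: it exchanges $\larm$ and $\rarm$ but has no controlled effect on $\rev$ or $\run$, and "applied at a subtree level" is not a construction. The paper obtains the missing transposition ($\larm\leftrightarrow\rarm$ preserving $\run$) by pushing everything through $\tau$ to Dyck paths, where it is exactly the Li--Lin involution $\Phi$ exchanging $(\hrun,\ret)$ while preserving the composition type (hence $\vrun$); combined with the $180$-degree rotation of Dyck paths this yields the symmetry of $\sum_D x^{\hrun}y^{\ret}z^{\vrun}$, and $\Psi$ then transfers it to $(\larm,\rarm,\rev)$. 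Some such second ingredient is indispensable and is absent from your argument.
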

\begin{remark}
The distribution of the pair $(\rev,\run)$ is not symmetric over $\P_5$.
\end{remark}

Before the construction of the bijection $\Psi$, we show how the symmetry of $C_n(x,y,z)$ follows from $\Psi$ and an involution on Dyck paths introduced in~\cite{LL}. 

Recall that a {\em Dyck path} of order $n$ is a lattice path in $\N^2$ from $(0,0)$ to
$(n,n)$ using the {\em east step} $(1,0)$ and the {\em north step} $(0,1)$, which does
not pass above the diagonal $y=x$. Let $\mathcal{D}_n$ be the set of all Dyck paths of order $n$. Let ${\bf c}=(c_1,c_2,\ldots,c_k)$ be a composition of $n$. A Dyck path  $D\in\D_n$   is said to has {\em composition-type ${\bf c}$} if 
it begins with $c_1$ east steps followed by at least one north step, and then continues with $c_2$ east steps followed by at least one north step, and then so on. For instance, the Dyck path in Fig.~\ref{plan:preorder} has composition type $(3,3,1,2,1,2,2,1,1,1)$. Three statistics introduced in~\cite{LL} are involved:
\begin{itemize}
\item the number of times that $D$ returns to the diagonal $y=x$ after the departure, denoted $\ret(D)$; 
\item the number of north steps before the first north step (included) that followed immediately by a north step, denoted $\hrun(D)$;  
\item the number of east steps after the last east step (excluded) that followed immediately by an east step, denoted $\vrun(D)$. 
\end{itemize}
By convention, for the only zigzag Dyck path (i.e., each east step is followed by a north step) $D\in\D_n$, we define $\hrun(D)=\vrun(D)=n$. For example,  for the Dyck path $D$ in Fig.~\ref{plan:preorder}, we have $\ret(D)=3$, $\hrun(D)=2$ and $\vrun(D)=4$. 
The following involution on Dyck paths was constructed by Li and Lin~\cite{LL}.
\begin{theorem}[Li and Lin]
There exists a composition-type preserving involution $\Phi$ on $\D_n$ that exchanges the pair of statistics $(\hrun,\ret)$. 
\end{theorem}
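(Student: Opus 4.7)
The plan is to reduce the theorem to a combinatorial swap on admissible north-run sequences, and then construct $\Phi$ via a recursive bijection based on the first-return decomposition of Dyck paths.

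Since $\Phi$ is to preserve composition type, the problem splits by composition: for each $\mathbf{c} = (c_1, \ldots, c_k) \models n$ one needs an involution on the set
\[
\mathcal{A}(\mathbf{c}) = \Bigl\{ \mathbf{a} = (a_1, \ldots, a_k) \in \mathbb{Z}_{\geq 1}^k : \textstyle\sum_i a_i = n,\; A_j \leq C_j \text{ for each } j \Bigr\},
\]
where $A_j = a_1 + \cdots + a_j$ and $C_j = c_1 + \cdots + c_j$. Encoding each $D \in \D_n$ by its composition type $\mathbf{c}$ and north-run sequence $\mathbf{a} \in \mathcal{A}(\mathbf{c})$, one checks that the two statistics translate to
\[
\hrun(D) = \min\{m \geq 1 : a_m \geq 2\}, \qquad \ret(D) = |\{j \in [k] : A_j = C_j\}|,
\]
with the convention $\hrun(D) = k$ when $\mathbf{a} = (1, \ldots, 1)$. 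The task is therefore an involution on $\mathcal{A}(\mathbf{c})$ exchanging these two integer indices.

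For the construction I would apply the first-return decomposition $D = D_L D_R$, with $D_L$ the prime prefix of $D$ ending at the first return to the diagonal (say at composition-index $j_1$) and $D_R$ the remainder. Under this split one has $\ret(D) = 1 + \ret(D_R)$, whereas $\hrun(D)$ either lies within $D_L$ (when $m \leq j_1$) or satisfies $\hrun(D) = j_1 + \hrun(D_R)$ (when $m > j_1$; note that this case forces the prime prefix to be the zigzag, i.e., $c_1 = \cdots = c_{j_1} = 1$ and $a_1 = \cdots = a_{j_1} = 1$). Recursively applying $\Phi$ to $D_R$ handles the second branch, while the first branch needs a separate swap internal to the prime prefix $D_L$. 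The base cases are the single-block composition $(n)$ (the staircase $E^n N^n$, with $\hrun = \ret = 1$) and the all-ones composition (the unique zigzag, with $\hrun = \ret = k$); on both $\Phi$ is forced to be the identity.

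The main obstacle is the coupling between the two recursive branches: when the first ascent lies inside $D_R$, the recursive $\Phi(D_R)$ will in general have a different first-return index than $D_R$, so the naive splice does not a priori re-produce a valid Dyck path of the same composition type $\mathbf{c}$ after swapping. I anticipate that this can be overcome by decomposing $D$ more symmetrically, indexed jointly by the first-return index $j_1$ and the first-ascent index $m$, and by defining $\Phi$ on $(\mathbf{c}, \mathbf{a})$ as an explicit algorithmic rearrangement of $\mathbf{a}$ that realizes any target pair $(\hrun, \ret)$ uniquely. Once such an explicit rule is established, involutivity is immediate from its symmetric definition, and verifying the statistic exchange reduces to a bookkeeping argument on the partial sums $A_j$ and $C_j$ along the recursion.
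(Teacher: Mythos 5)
First, a contextual note: this theorem is not proved in the paper at all --- it is imported verbatim from Li and Lin~\cite{LL}, where the involution $\Phi$ is constructed explicitly. So there is no internal proof to match your proposal against; what matters is whether your argument stands on its own. It does not yet. Your setup is correct and useful: encoding $D$ by its composition type $\mathbf{c}$ and north-run sequence $\mathbf{a}$ with the ballot condition $A_j\leq C_j$, and translating $\hrun(D)=\min\{m: a_m\geq 2\}$ and $\ret(D)=|\{j: A_j=C_j\}|$ are both right (I checked them against the example path in the paper, where $\mathbf{c}=(3,3,1,2,1,2,2,1,1,1)$, $\mathbf{a}=(1,2,1,3,3,1,1,2,2,1)$, giving $\hrun=2$ and $\ret=3$). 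But the theorem's entire content is the existence of the map, and your proposal never defines it. The first-return recursion handles only the branch where the first long north-run lies beyond the first return (and even there, as you note, splicing $\Phi(D_R)$ back in does not obviously preserve $\mathbf{c}$ or produce the right global statistics, since $\hrun$ and $\ret$ are measured from opposite ``ends'' of the structure and do not decompose compatibly along the same cut). The other branch --- first ascent inside the prime prefix, where $\ret(D_L)=1$ identically so no local swap is possible --- is exactly where the work lies, and you defer it with ``I anticipate that this can be overcome.'' That is the missing idea, not a bookkeeping detail.

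Two smaller points. The case $m>j_1$ forces not merely a zigzag prime prefix but $j_1=1$ and $D_L=EN$: if $a_1=c_1=1$ then $A_1=C_1$ and the first return already occurs at index $1$, so a zigzag prefix of length greater than one is never prime. More importantly, the reduction to an involution on $\mathcal{A}(\mathbf{c})$ exchanging $\min\{m:a_m\geq2\}$ with $|\{j:A_j=C_j\}|$ is a clean reformulation, but it is exactly as hard as the original problem; to complete the proof you would need to write down the explicit rearrangement rule (Li and Lin's construction in~\cite{LL} is such a rule) and verify both involutivity and the exchange of statistics, neither of which follows ``immediately'' from a definition you have not given.
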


\begin{corollary}\label{sym:dyck}
The three-variable function 
$$
\sum_{D\in\D_n}x^{\hrun(D)} y^{\ret(D)}z^{\vrun(D)}
$$ is a symmetric function. 
\end{corollary}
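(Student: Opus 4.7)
The plan is to combine the involution $\Phi$ from the preceding theorem with a second involution that realizes a transposition of a different pair among $\{\hrun, \ret, \vrun\}$; together they will generate the full symmetric group $S_3$ acting on these three statistics, forcing the three-variable generating polynomial to be symmetric in $x, y, z$.

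First I would observe that $\Phi$ already yields the symmetry $x \leftrightarrow y$. Indeed, $\vrun(D)$ depends only on the east-step pattern of $D$ (it counts east steps lying strictly after the position of the last east step that is immediately followed by another east step), and two Dyck paths with the same composition-type share identical east-step patterns. Consequently $\vrun$ is a function of the composition-type, so $\Phi$, being composition-type preserving, preserves $\vrun$ while exchanging $\hrun$ and $\ret$.

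Next I would introduce the reversal map $\rev \colon \D_n \to \D_n$ sending $D = s_1 s_2 \cdots s_{2n}$ to $\rev(D) = \bar{s}_{2n} \bar{s}_{2n-1} \cdots \bar{s}_1$, where $\bar{E} = N$ and $\bar{N} = E$, which is geometrically a $180^\circ$ rotation about $(n/2, n/2)$. A short prefix argument shows that the Dyck condition for $\rev(D)$ at step $j$ is equivalent to the Dyck condition for $D$ at step $2n - j$, so $\rev$ is an involution on $\D_n$. I would then verify two properties: first, $\ret(\rev(D)) = \ret(D)$, since $\rev(D)$ meets the diagonal at $(j, j)$ precisely when $D$ meets it at $(n - j, n - j)$, so post-departure touches are counted identically on both sides; and second, $\hrun$ and $\vrun$ are swapped by $\rev$, because a consecutive $NN$ at positions $(j, j+1)$ of $\rev(D)$ corresponds to a consecutive $EE$ at positions $(2n - j, 2n + 1 - j)$ of $D$, making the first $NN$ of $\rev(D)$ correspond to the last $EE$ of $D$; a direct tally then shows the $N$-count of $\rev(D)$ up to the first $NN$ equals the $E$-count of $D$ strictly past the last $EE$. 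The zigzag path is a common fixed point with $\hrun = \vrun = n$, so the conventional case requires no separate treatment.

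Thus $\Phi$ realizes the transposition $x \leftrightarrow y$ and $\rev$ realizes $x \leftrightarrow z$; these two transpositions generate $S_3$, so the polynomial is invariant under every permutation of $\{x, y, z\}$ and is therefore a symmetric function. The main obstacle here is conceptual rather than technical: noticing that the preceding theorem already yields $\vrun$-preservation for free via composition-type, and that path reversal is the natural companion involution. Once these are spotted, everything reduces to routine bookkeeping with step sequences.
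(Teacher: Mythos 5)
Your proof is correct and follows essentially the same route as the paper: the composition-type-preserving involution $\Phi$ gives the $x\leftrightarrow y$ symmetry (since $\vrun$ is determined by the composition-type), and the reverse-complement map you call reversal is exactly the paper's ``mirror symmetry after rotating $180$ degrees,'' which preserves $\ret$ and swaps $\hrun$ with $\vrun$. You simply spell out the verifications that the paper leaves as ``clear.''
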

\begin{proof}
Since the composition-type of a Dyck path determines the statistic `$\vrun$', the symmetry 
$$
\sum_{D\in\D_n}x^{\hrun(D)} y^{\ret(D)}z^{\vrun(D)}=\sum_{D\in\D_n}y^{\hrun(D)} x^{\ret(D)}z^{\vrun(D)}
$$
then follows from the involution $\Phi$. The other two symmetries are clear by combining with  the involution of taking the mirror symmetry of the Dyck paths after rotating 180 degrees. 
\end{proof}

There is a bijection $\tau$ (see~\cite{LL}) that maps a plane tree to a Dyck path by recording the steps when the tree is traversed in preorder: whenever we visit  a node of degree $i$ (for the first time), except the last leaf, we record $i$ east steps followed by one north step.  See Fig.~\ref{plan:preorder} for one example of $\tau$. The bijection $\tau$ has the following properties,  which are clear from the above construction. 

\begin{lemma}\label{lem:tau}For any plane tree $T$, we have
\begin{equation}\label{fea:tau}
(\larm,\rarm,\run)(T)=(\hrun,\ret,\vrun)\tau(T).
\end{equation}
\end{lemma}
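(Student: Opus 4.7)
The plan is to verify the three coordinate equalities $\larm = \hrun$, $\rarm = \ret$, and $\run = \vrun$ separately, using the explicit definition of $\tau$: each internal node $v$ contributes $\deg(v)$ east steps followed by one north step, each non-terminal leaf contributes a single north step, and the last leaf of $T$ contributes nothing.

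The equality $\larm(T) = \hrun(\tau(T))$ should be the most immediate. I would argue locally at the start of $\tau(T)$: the first nodes visited in preorder are the left-arm nodes $v_0, v_1, \ldots, v_\ell$, of which $v_0, \ldots, v_{\ell-1}$ are internal and thus produce at least one east step before each of the first $\ell$ north steps. If $T$ is not a path, then $v_\ell$ is a non-terminal leaf contributing a lone north step, so the $\ell$-th north step is the first one immediately followed by another north, giving $\hrun = \ell$. The path case is absorbed by the matching conventional value $n$ on both sides.

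For $\rarm = \ret$, I would proceed by induction on $T$. The key preliminary step is to unfold $\tau$ recursively as
$$\tau(T) = e^k N \cdot \tau(T_1) \cdot N \cdot \tau(T_2) \cdot N \cdots \tau(T_{k-1}) \cdot N \cdot \tau(T_k),$$
where the root of $T$ has children $c_1, \ldots, c_k$ and $T_i$ is the subtree rooted at $c_i$; the asymmetry at the final position reflects that only the last leaf of $T_k$ is the genuine last leaf of $T$. Tracking the horizontal--vertical gap, the sub-prefix $e^k N \cdot \tau(T_1) N \cdots \tau(T_i) N$ ends with gap $k - i - 1$, while within each internal $\tau(T_i)$ the gap stays at least $k - i \ge 1$ for $i \le k-1$. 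Hence the prefix before $\tau(T_k)$ touches the diagonal exactly once, at its very end (when $i = k-1$), and the suffix $\tau(T_k)$, starting on the diagonal, contributes $\ret(\tau(T_k)) = \rarm(T_k)$ further returns by the inductive hypothesis. This yields $\ret(\tau(T)) = 1 + \rarm(T_k) = \rarm(T)$.

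For $\run = \vrun$, the observation to exploit is that internal nodes of degree $d$ correspond, in the same preorder, to east-blocks of size $d$ in $\tau(T)$, while leaves contribute no east step. Thus the last preorder node $u$ of degree $\ge 2$ corresponds to the last east-block of size $\ge 2$, every subsequent internal node has degree exactly $1$ (giving a size-$1$ east-block), and $\vrun$ counts the final east of $u$'s block together with all subsequent east steps, equivalently $1$ plus the number of degree-$1$ nodes strictly after $u$ in preorder; this is exactly $\run(T)$. The path case is again covered by the matching conventions. The main obstacle I anticipate is pinning down the recursive decomposition of $\tau$ cleanly, since the omission of the terminal-leaf $N$ makes the formula asymmetric in $T_1, \ldots, T_k$; once that decomposition and the gap-tracking are in place, all three verifications reduce to careful bookkeeping.
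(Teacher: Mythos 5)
Your proposal is correct and follows the same route as the paper, which simply asserts that these three identities ``are clear from the above construction'' of $\tau$ and gives no further argument. Your write-up supplies the details the paper omits --- the local analysis at the start of the path for $\larm=\hrun$, the recursive decomposition $\tau(T)=e^kN\,\tau(T_1)N\cdots\tau(T_{k-1})N\,\tau(T_k)$ with gap-tracking for $\rarm=\ret$, and the correspondence between degree-$d$ nodes and east-blocks for $\run=\vrun$ --- and all three verifications, including the treatment of the path/zigzag conventions, are sound.
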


The fact that $C_n(x,y,z)$ is a symmetric function then follows by combining  Lemma~\ref{lem:tau}, Corollary~\ref{sym:dyck} and the bijection $\Psi$. The rest of this section is devoted to the construction of $\Psi$, which is based on a  technical lemma regarding the statistic ``$\rev$''. 

For a plane tree $T$, let $\bran(T)$ be  the number of branches of $T$ at the root, i.e., the number of children of the root. Let 
$$\P_n^{(k,l)}:=\{T\in\P_n: \bran(T)\geq2, \larm(T)=k,\rarm(T)=l\}.$$

\begin{lemma}\label{lem:rev}
Fix integers $n,k\geq1$. There exists a bijection $\psi:\P_n^{(k,2)}\rightarrow\P_n^{(k+1,1)}$ preserving the pair of statistics $(\rev,\leaf)$. 
\end{lemma}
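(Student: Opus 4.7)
The approach is to construct $\psi$ explicitly by an edge-surgery on plane trees. Given $T\in\P_n^{(k,2)}$, write $r_k$ for the terminal leaf of the left arm and $v=s_1$ for the rightmost child of the root, so that $v$ has children $c_1,\dots,c_q$ with $c_q=s_2$ a leaf. The basic surgery produces a candidate tree by: promoting $c_q$ to be the new leaf rightmost child of the root; placing $c_1,\dots,c_{q-1}$ as new middle children immediately to the left of $c_q$; and moving $v$ (now childless), preceded by the original middle children of the root in their order, to become the children of $r_k$. When the newly created leftmost child of $r_k$ is a leaf, the surgery output lies in $\P_n^{(k+1,1)}$, and this is the definition of $\psi(T)$ in the favorable case. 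One can check on small examples (e.g. $k=1$, $n=3,4$) that $(\rev,\leaf)$ is preserved here.

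The general case is more subtle: if some middle child of the root in $T$ is internal, then the newly created leftmost child of $r_k$ is internal and the left arm of the surgery output would overshoot $k+1$. My plan is to handle this by recursion. The subtree at $r_k$ in the surgery output then lies in $\P_{n'}^{(k',1)}$ for some $n'<n$ and some $k'\ge 2$, and one applies an auxiliary bijection (a variant or iterate of $\psi^{-1}$) to this subtree to bring its left arm down to length $1$, which makes the global left arm have length exactly $k+1$. The induction is on $n$, with the base case being the favorable situation above. The inverse $\psi^{-1}$ is built symmetrically: locate the rightmost leaf-child of the root of $T'$, identify $v$ as the last child of $r_k$, and undo the redistribution of the remaining children.

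Preservation of $\leaf$ is straightforward bookkeeping: each step trades exactly one leaf for one internal node, with $r_k$ and $v$ swapping roles. The \emph{main obstacle}, and by far the most delicate part, is proving preservation of $\rev$. The reverse preorders of $T$ and $\psi(T)$ begin with structurally dissimilar runs---$T$ opens with the deep right-spine leaf $s_2$ followed by its ancestors along the right arm, while $\psi(T)$ opens with the shallow root-level rightmost leaf followed by excursions into neighbouring root-level subtrees---so no literal correspondence between prefixes is available. I plan to establish $\rev(T)=\rev(\psi(T))$ by induction, identifying the matching pair of ``first node with a younger sibling'' in the two trees and using the inductive hypothesis on the recursive sub-call applied to the subtree at $r_k$ to account for the length of the initial ``youngest-child'' prefix contributed by the left-arm correction step.
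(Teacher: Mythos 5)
Your surgery is genuinely different from the paper's (the paper shifts the branch hanging at the second node of the right arm onto the root and pushes each left-arm branch one node further down the arm, then repairs $\rev$ through five corrective cases), but as written it does not work: the basic surgery fails to preserve $\rev$ already in your ``favorable'' case. Take $T\in\P_6^{(2,2)}$ in which the root has children $\ell,m_1,v$ from left to right, $\ell$ has two leaf children $r_2$ and $w$, $m_1$ is a leaf, and $v$ has a single leaf child $c_1$. The reverse preorder of $T$ begins $c_1,v,m_1,\ldots$, and $m_1$ has the younger sibling $v$, so $\rev(T)=2$. Your surgery makes $c_1$ the rightmost (leaf) child of the root and gives $r_2$ the children $m_1,v$; since the new leftmost child $m_1$ of $r_2$ is a leaf, this is your favorable case and the output is declared to be $\psi(T)$. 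But its reverse preorder begins $c_1,w,v,m_1,\ldots$, so $\rev(\psi(T))=3$. The problem is structural: in $T$ the block ``$v$ followed by the reversed middle subtrees of the root'' precedes, in reverse preorder, the reversed branches hanging off the left arm, whereas after your surgery these two blocks are swapped; whenever the left-arm branches contribute nodes without younger siblings (as $w$ does above) the count changes. Checking only $k=1$, where the left arm has no interior branching, hides exactly this failure.

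Beyond this, the two remaining ingredients are not supplied. The correction in the overshooting case is not a construction: $\psi^{-1}$ maps $\P_{n'}^{(k',1)}$ to $\P_{n'}^{(k'-1,2)}$, so after one application the right arm of the subtree at $r_k$ has length $2$ and $\psi^{-1}$ cannot be applied again; ``a variant or iterate of $\psi^{-1}$'' leaves undefined precisely the map you need, and its effect on the global $\rev$ is not analyzed. Finally, you explicitly defer the proof that $\rev$ is preserved, which is where essentially all of the content of this lemma lies --- in the paper's proof this occupies the case analysis a1), a2), b1), b2), b3) together with an explicit case-by-case inverse. As it stands the proposal is not a proof, and its base case is false.
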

\begin{proof}
For the sake of convenience, we call the first node (under the reverse preorder) that has younger sibling in $T$ the {\em rev-node of $T$}. Thus, $\rev(T)$ counts the nodes before the rev-node of $T$ under the reverse preorder.  The construction of $\psi$ consists of two main steps: (i) shift  the branches of the nodes in the right and left arms of $T$; (ii) adjust the resulting tree if necessary to keep the statistic ``$\rev$''. 

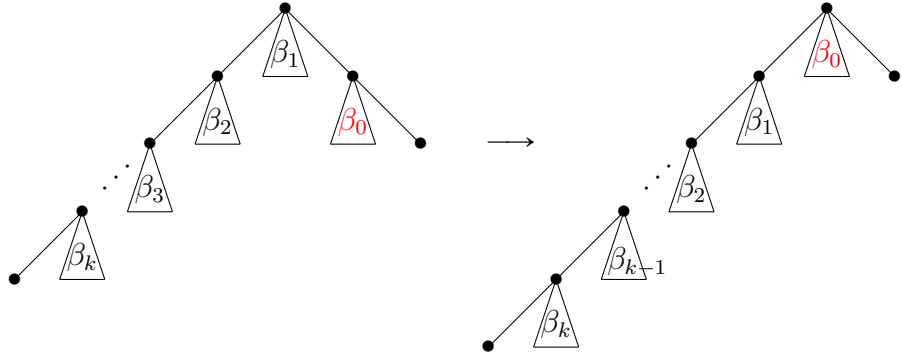
\begin{figure}
\begin{center}
\begin{tikzpicture}[scale=0.3]
\draw[-](0,0) to (-6,-6);\draw[-](0,0) to (6,-6);\draw[-](-12,-12) to (-9,-9);
\draw[-](-3,-3) to (-2,-6);\draw[-](-3,-3) to (-4,-6);\draw[-](-4,-6) to (-2,-6);
\draw[-](0,0) to (1,-3);\draw[-](0,0) to (-1,-3);\draw[-](1,-3) to (-1,-3);

\draw[-](3,-3) to (2,-6);\draw[-](3,-3) to (4,-6);\draw[-](2,-6) to (4,-6);
\draw[-](-6,-6) to (-7,-9);\draw[-](-6,-6) to (-5,-9);\draw[-](-5,-9) to (-7,-9);
\draw[-](-9,-9) to (-8,-12);\draw[-](-9,-9) to (-10,-12);\draw[-](-8,-12) to (-10,-12);\node at  (-9,-11){$\beta_{k}$};
\node at  (0,0){$\bullet$};
\node at  (-3,-3){$\bullet$};\node at  (-3,-5){$\beta_2$};
\node at  (-6,-6){$\bullet$};\node at  (-6,-8){$\beta_3$};
\node at  (3,-3){$\bullet$};\node at  (3,-5){\red{$\beta_0$}};\node at  (0,-2){$\beta_1$};
\node at  (6,-6){$\bullet$};

\node at  (-9,-9){$\bullet$};
\node at  (-12,-12){$\bullet$};
\node at  (-7,-7){$\cdot$};\node at  (-7.5,-7.5){$\cdot$};\node at  (-8,-8){$\cdot$};
\node at  (10,-6){$\longrightarrow$};


\draw[-](24,0) to (18,-6);\draw[-](24,0) to (27,-3);\draw[-](12,-12) to (15,-9);
\draw[-](21,-3) to (22,-6);\draw[-](21,-3) to (20,-6);\draw[-](20,-6) to (22,-6);
\draw[-](24,0) to (25,-3);\draw[-](24,0) to (23,-3);\draw[-](25,-3) to (23,-3);

\draw[-](18,-6) to (17,-9);\draw[-](18,-6) to (19,-9);\draw[-](19,-9) to (17,-9);
\draw[-](15,-9) to (16,-12);\draw[-](15,-9) to (14,-12);\draw[-](16,-12) to (14,-12);\node at  (15.6,-11.2){$\beta_{k-1}$};
\node at  (24,0){$\bullet$};
\node at  (21,-3){$\bullet$};\node at  (21,-5){$\beta_1$};
\node at  (18,-6){$\bullet$};\node at  (18,-8){$\beta_2$};
\node at  (27,-3){$\bullet$};\node at  (24,-2){\red{$\beta_0$}};

\node at  (15,-9){$\bullet$};\node at  (12,-14){$\beta_k$};
\node at  (12,-12){$\bullet$};\draw[-](12,-12) to (11,-15);\draw[-](12,-12) to (13,-15);\draw[-](13,-15) to (11,-15);
\draw[-](12,-12) to (9,-15);\node at  (9,-15){$\bullet$};
\node at  (17,-7){$\cdot$};\node at  (16.5,-7.5){$\cdot$};\node at  (16,-8){$\cdot$};

\end{tikzpicture}
\end{center}
\caption{The first step of $\psi$.\label{step1}}
\end{figure}

{\bf Step (i) of $\psi$.} Given $T\in\P_n^{(k,2)}$, let $\beta_0$ be the subtree at the second node in the right arm and let $\beta_i$ ($1\leq i\leq k$) be the subtree at the $i$-th node in the left arm,  after deleting all the edges in the left and right arms of $T$. Then form the plane tree $T'\in\P_n^{(k+1,1)}$ such that after deleting all the edges in the left and right arms, $\beta_i$ ($0\leq i\leq k$) is the subtree at the $(i+1)$-th node in the left arm of $T'$. See Fig.~\ref{step1} for an illustration of this shifting $T\mapsto T'$. 

If $\beta_0$ contains at least two nodes, then $\rev(T)=\rev(T')$ and we set $\psi(T)=T'$. Otherwise, $\beta_0$ contains only one node and we perform  the step (ii) of $\psi$.

{\bf Step (ii) of $\psi$.} As $\beta_0$ contains only one node, $\rev(T')$ is not always equal to $\rev(T)$ and so we need to distinguish two main cases. 
\begin{itemize}
\item[a)] $\rev(T')\geq \rev(T)$. We further distinguish two subcases:
\begin{itemize}
\item[a1)]{\bf$\beta_1$ contains only one node.} This forces all $\beta_i$, $2\leq i\leq k$, contain only one node and thus $\rev(T')=\rev(T)$. We set $\psi(T)=T'$ in this case.  
\item[a2)] {\bf $\beta_1$  contains at least two nodes.}  Thus, the subtree at the youngest child of the root of $\beta_1$ must be a path (otherwise, we will have $\rev(T')<\rev(T)$, a contradiction) that is denoted by $P$. Suppose that $T^*$ is the subtree of $T'$ at the second node in the left arm after cutting the branch $P$.  Then $\psi(T)$ is obtained from $T'$ by cutting  the path $P$  and attaching it to the node which is the penultimate node on the right arm of $T^*$ (see Fig.~\ref{step2:a} for an example). It is clear that $\rev(T)=\rev(\psi(T))$ holds.
 Note that  $\psi(T)=T'$ may occur when the penultimate node on the right arm of $T^*$ is the parent of the root of $P$ in $T'$, i.e., when $\rarm(T^*)=1$. 
 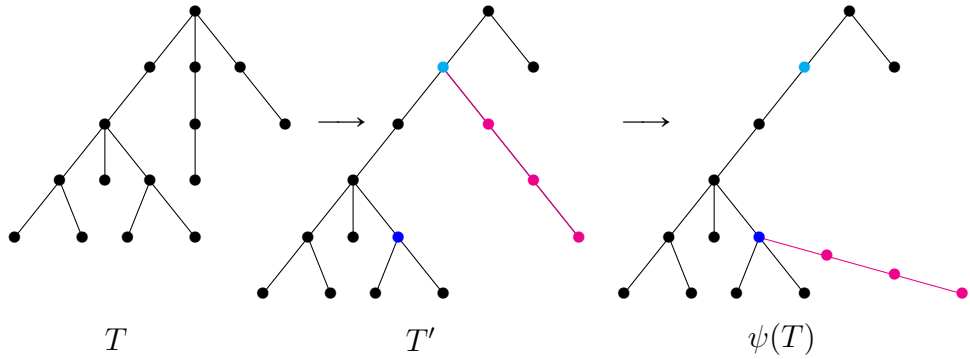
\begin{figure}
\begin{center}
\begin{tikzpicture}[scale=0.3]
\draw[-](0,0) to (-8,-10);
\node at  (0,0){$\bullet$};
\node at  (-2,-2.5){$\bullet$};
\node at  (-4,-5){$\bullet$};\node at  (-6,-7.5){$\bullet$};\node at  (-8,-10){$\bullet$};
\draw[-](0,0) to (4,-5);\node at  (4,-5){$\bullet$};\node at  (2,-2.5){$\bullet$};
\draw[-](0,0) to (0,-7.5);
\node at  (0,-2.5){$\bullet$};\node at  (0,-5){$\bullet$};\node at  (0,-7.5){$\bullet$};
\draw[-](-4,-5) to (0,-10);\node at  (-2,-7.5){$\bullet$};\node at  (0,-10){$\bullet$};
\draw[-](-2,-7.5) to (-3,-10);\node at  (-3,-10){$\bullet$};
\draw[-](-4,-5) to (-4,-7.5);\node at  (-4,-7.5){$\bullet$};
\draw[-](-6,-7.5) to (-5,-10);\node at  (-5,-10){$\bullet$};
\node at  (6.5,-5){$\longrightarrow$};
\node at  (-3.5,-14.5){$T$};
\draw[-](13,0) to (3,-12.5);\draw[-](13,0) to (15,-2.5);\draw[-](5,-10) to (6,-12.5);\draw[-](7,-7.5) to (7,-10);\draw[-](7,-7.5) to (11,-12.5);\draw[-](9,-10) to (8,-12.5);\draw[-](11,-2.5) to (17,-10);
\draw[magenta](11,-2.5) to (17,-10);
\node at  (13,0){$\bullet$};
\cyan{\node at  (11,-2.5){$\bullet$};}
\node at  (15,-2.5){$\bullet$};
\node at  (9,-5){$\bullet$};\node at  (7,-7.5){$\bullet$};\node at  (5,-10){$\bullet$};\node at  (3,-12.5){$\bullet$};\node at  (6,-12.5){$\bullet$};
\node at  (7,-10){$\bullet$};\node at  (9,-10){\blue{$\bullet$}};\node at  (11,-12.5){$\bullet$};\node at  (8,-12.5){$\bullet$};\magenta{\node at  (13,-5){$\bullet$};\node at  (15,-7.5){$\bullet$};\node at  (17,-10){$\bullet$};}
\node at  (10,-14.5){$T'$};
\node at  (20,-5){$\longrightarrow$};

\draw[-](29,0) to (19,-12.5);\draw[-](29,0) to (31,-2.5);\draw[-](21,-10) to (22,-12.5);\draw[-](23,-7.5) to (23,-10);\draw[-](23,-7.5) to (27,-12.5);\draw[-](25,-10) to (24,-12.5);
\node at  (29,0){$\bullet$};
\cyan{\node at  (27,-2.5){$\bullet$};}
\node at  (31,-2.5){$\bullet$};
\node at  (25,-5){$\bullet$};\node at  (23,-7.5){$\bullet$};
\node at  (21,-10){$\bullet$};\node at  (19,-12.5){$\bullet$};\node at  (22,-12.5){$\bullet$};
\node at  (23,-10){$\bullet$};
\node at  (27,-12.5){$\bullet$};\node at  (24,-12.5){$\bullet$};
\draw[magenta](25,-10) to (34,-12.5);
\node at  (28,-10.83){\magenta{$\bullet$}};\node at  (31,-11.66){\magenta{$\bullet$}};\node at  (34,-12.5){\magenta{$\bullet$}};
\node at  (25,-10){\blue{$\bullet$}};
\node at  (26,-14.5){$\psi(T)$};
\end{tikzpicture}
\end{center}
\caption{An example of case a2).\label{step2:a}}
\end{figure}
\end{itemize}
\item[b)] $\rev(T')<\rev(T)$. In this case, the parent of the rev-node of $T$ can not be the root. Thus, we have $\rev(T')=\rev(T)-1$. We further distinguish three subcases:
 \begin{figure}
\begin{center}
\begin{tikzpicture}[scale=0.3]
\draw[-](0,0) to (-6,-7.5);\draw[-](0,0) to (4,-5);\draw[-](-2,-2.5) to (2,-7.5);
\node at  (0,0){$\bullet$};\node at  (-2,-2.5){$\bullet$};
\node at  (-4,-5){$\bullet$};\node at  (-6,-7.5){$\bullet$};
\node at  (2,-2.5){$\bullet$};\node at  (4,-5){$\bullet$};
\node at  (0,-5){$\bullet$};\node at  (2,-7.5){$\bullet$};
\draw[-](-4,-5) to (-4,-7.5);\draw[-](-4,-5) to (0,-10);\draw[-](-2,-7.5) to (-10,-17.5);
\node at  (-4,-7.5){$\bullet$};\node at  (-2,-7.5){$\bullet$};\node at  (0,-10){$\bullet$};
\node at  (-4,-10){$\bullet$};\node at  (-6,-12.5){$\bullet$};\node at  (-8,-15){$\bullet$};
\node at  (-10,-17.5){$\bullet$};\draw[-](-4,-10) to (-7,-11);\node at  (-7,-11){$\bullet$};
\draw[-](-6,-12.5) to (-4,-15);\node at  (-4,-15){$\bullet$};
\node at  (-4,-20){$T$};
\node at  (5,-8){$\longrightarrow$};
\draw[-](9,-7.5) to (9,-10);\draw[cyan](11,-10) to (13,-12.5);
\draw[-](9,-7.5) to (11,-10);\draw[-](11,-10) to (3,-20);\draw[-](9,-12.5) to (6,-13.5);
\draw[-](15,0) to (7,-10);\draw[-](15,0) to (17,-2.5);\draw[magenta](11,-5) to (15,-10);
\draw[blue](7,-15) to (9,-17.5);\draw[very thick,gray](13,-2.5) to (9,-7.5);
\draw[very thick,gray](11,-10) to (9,-7.5);\draw[very thick,gray](11,-10) to (7,-15);

\node at  (15,0){$\bullet$};
\red{\node at  (13,-2.5){$\bullet$};}\node at  (12.3,-2.3){$v$};
\node at  (11,-5){$\bullet$};
\node at  (9,-7.5){$\bullet$};\node at  (7,-10){$\bullet$};\node at  (17,-2.5){$\bullet$};
\magenta{\node at  (13,-7.5){$\bullet$};\node at  (15,-10){$\bullet$};}
\node at  (9,-10){$\bullet$};
\node at  (11,-10){$\bullet$};\node at  (13,-12.5){\cyan{$\bullet$}};
\node at  (9,-12.5){$\bullet$};
\node at  (6,-13.5){$\bullet$};
\red{\node at  (7,-15){$\bullet$};}\blue{\node at  (9,-17.5){$\bullet$};}
\node at  (5,-17.5){$\bullet$};
\node at  (3,-20){$\bullet$};
\node at  (10,-20){$T'$};
\node at  (19,-8){$\longrightarrow$};

\draw[-](30,0) to (22,-10);\draw[-](30,0) to (32,-2.5);\draw[magenta](28,-2.5) to (32,-7.5);
\draw[-](24,-7.5) to (24,-10);\node at  (24,-10){$\bullet$};
\draw[cyan](24,-7.5) to (28,-10);
\draw[-](24,-7.5) to (26,-10);
\draw[-](24,-12.5) to (21,-13.5);
\draw[-](26,-10) to (18,-20);

\draw[very thick,gray](28,-2.5) to (24,-7.5);
\draw[very thick,gray](26,-10) to (24,-7.5);\draw[very thick,gray](26,-10) to (22,-15);

\node at  (30,0){$\bullet$};
\red{\node at  (28,-2.5){$\bullet$};}\node at  (27.3,-2.3){$v$};
\node at  (26,-5){$\bullet$};
\node at  (24,-7.5){$\bullet$};\node at  (22,-10){$\bullet$};\node at  (32,-2.5){$\bullet$};
\magenta{\node at  (30,-5){$\bullet$};\node at  (32,-7.5){$\bullet$};}

\node at  (26,-10){$\bullet$};
\node at  (28,-10){\cyan{$\bullet$}};
\node at  (24,-12.5){$\bullet$};
\node at  (21,-13.5){$\bullet$};\draw[blue](24,-12.5) to (26,-15);
\red{\node at  (22,-15){$\bullet$};}\blue{\node at  (26,-15){$\bullet$};}
\node at  (20,-17.5){$\bullet$};
\node at  (18,-20){$\bullet$};
\node at  (26,-20){$\psi(T)$};

\end{tikzpicture}
\end{center}
\caption{An example of case b1).\label{step2:b1}}
\end{figure}
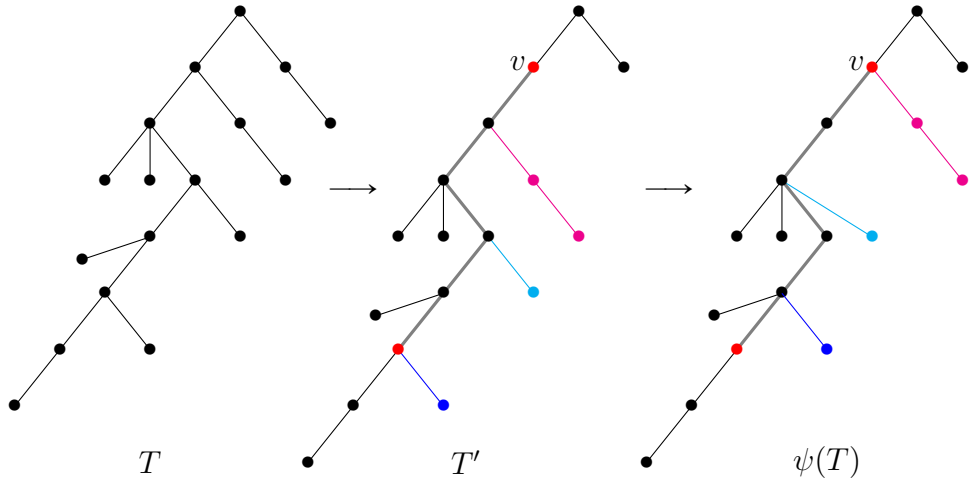

\begin{itemize}
\item[b1)] {\bf The rev-node of $T'$ has no elder sibling.} Let $P$ be the unique path in $T'$ from the second node $v$ in the left arm  to the parent of the rev-node (see the path in gray of $T'$ in Fig.~\ref{step2:b1}). Then, each node in $P$ has at most one branch, which must be a path, to the right of $P$ (see the three paths in color of $T'$ in Fig.~\ref{step2:b1}). In particular, there is no any branch of $v$ to the right of $P$. Now define $\psi(T)$ to be the tree obtained from $T'$ by moving all the paths connected to $P$ (from the right side) one step up. See  Fig.~\ref{step2:b1} for an example in this case. It is clear that $\rev(\psi(T))=\rev(T')+1=\rev(T)$. 

 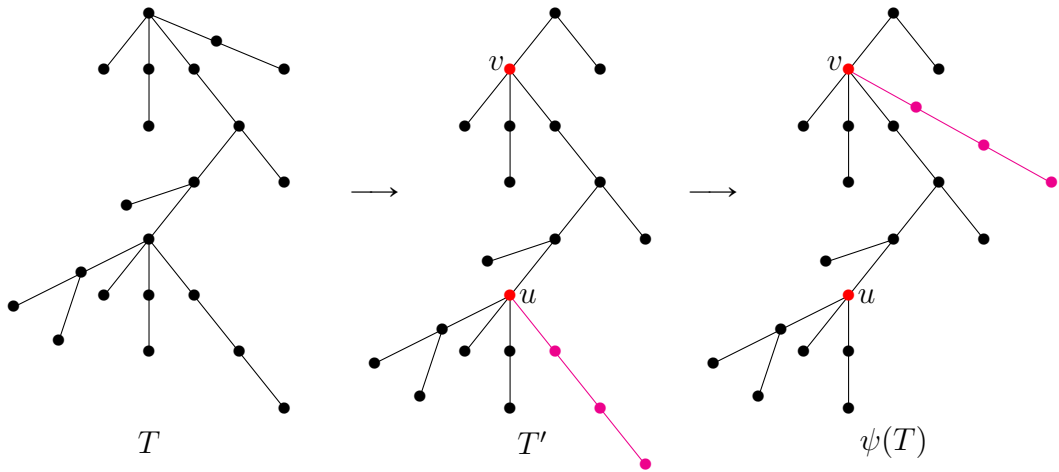
\begin{figure}
\begin{center}
\begin{tikzpicture}[scale=0.3]
\draw[-](0,0) to (-2,-2.5);\draw[-](0,0) to (0,-5);\draw[-](0,0) to (6,-7.5);\draw[-](0,0) to (6,-2.5);
\node at  (0,0){$\bullet$};\node at  (-2,-2.5){$\bullet$};\node at  (0,-2.5){$\bullet$};\node at  (0,-5){$\bullet$};\node at  (2,-2.5){$\bullet$};\node at  (4,-5){$\bullet$};\node at  (6,-7.5){$\bullet$};
\node at  (3,-1.25){$\bullet$};\node at  (6,-2.5){$\bullet$};
\draw[-](4,-5) to (-2,-12.5);\draw[-](0,-10) to (6,-17.5);\draw[-](0,-10) to (0,-15);
\draw[-](0,-10) to (-6,-13);\draw[-](-3,-11.5) to (-4,-14.5);\draw[-](2,-7.5) to (-1,-8.5);
\node at  (0,-10){$\bullet$};
\node at  (2,-7.5){$\bullet$};\node at  (-1,-8.5){$\bullet$};
\node at  (2,-12.5){$\bullet$};\node at  (4,-15){$\bullet$};\node at  (6,-17.5){$\bullet$};
\node at  (0,-12.5){$\bullet$};\node at  (0,-15){$\bullet$};
\node at  (-2,-12.5){$\bullet$};\node at  (-3,-11.5){$\bullet$};\node at  (-6,-13){$\bullet$};
\node at  (-4,-14.5){$\bullet$};

\node at  (0,-19){$T$};
\node at  (10,-8){$\longrightarrow$};

\draw[-](16,-2.5) to (14,-5);\draw[-](16,-2.5) to (16,-7.5);\draw[-](16,-2.5) to (22,-10);
\draw[-](16,-2.5) to (18,0);\draw[-](20,-2.5) to (18,0);
\draw[-](20,-7.5) to (14,-15);\draw[magenta](16,-12.5) to (22,-20);\draw[-](16,-12.5) to (16,-17.5);
\draw[-](16,-12.5) to (10,-15.5);\draw[-](13,-14) to (12,-17);\draw[-](18,-10) to (15,-11);
\node at  (18,0){$\bullet$};\node at  (20,-2.5){$\bullet$};
\red{\node at  (16,-2.5){$\bullet$};}\node at  (15.4,-2.2){$v$};
\node at  (14,-5){$\bullet$};
\node at  (16,-5){$\bullet$};\node at  (16,-7.5){$\bullet$};\node at  (18,-5){$\bullet$};\node at  (20,-7.5){$\bullet$};\node at  (22,-10){$\bullet$};

\node at  (16,-12.5){\red{$\bullet$}};\node at  (16.8,-12.5){$u$};
\node at  (18,-10){$\bullet$};\node at  (15,-11){$\bullet$};
\magenta{\node at  (18,-15){$\bullet$};\node at  (20,-17.5){$\bullet$};\node at  (22,-20){$\bullet$};}
\node at  (16,-15){$\bullet$};\node at  (16,-17.5){$\bullet$};
\node at  (14,-15){$\bullet$};\node at  (13,-14){$\bullet$};\node at  (10,-15.5){$\bullet$};
\node at  (12,-17){$\bullet$};
\node at  (17,-19){$T'$};
\node at  (25,-8){$\longrightarrow$};

\draw[-](31,-2.5) to (29,-5);\draw[-](31,-2.5) to (31,-7.5);\draw[-](31,-2.5) to (37,-10);
\draw[magenta](31,-2.5) to (40,-7.5);
\draw[-](31,-2.5) to (33,0);\draw[-](35,-2.5) to (33,0);
\draw[-](35,-7.5) to (29,-15);\draw[-](31,-12.5) to (31,-17.5);
\draw[-](31,-12.5) to (25,-15.5);\draw[-](28,-14) to (27,-17);\draw[-](33,-10) to (30,-11);
\node at  (33,0){$\bullet$};\node at  (35,-2.5){$\bullet$};
\red{\node at  (31,-2.5){$\bullet$};}\node at  (30.4,-2.2){$v$};
\node at  (29,-5){$\bullet$};\node at  (31,-5){$\bullet$};\node at  (31,-7.5){$\bullet$};\node at  (33,-5){$\bullet$};\node at  (35,-7.5){$\bullet$};\node at  (37,-10){$\bullet$};
\magenta{\node at  (34,-4.166){$\bullet$};\node at  (37,-5.832){$\bullet$};\node at  (40,-7.5){$\bullet$};}

\node at  (31,-12.5){\red{$\bullet$}};\node at  (31.8,-12.5){$u$};
\node at  (33,-10){$\bullet$};\node at  (30,-11){$\bullet$};
\node at  (31,-15){$\bullet$};\node at  (31,-17.5){$\bullet$};
\node at  (29,-15){$\bullet$};\node at  (28,-14){$\bullet$};\node at  (25,-15.5){$\bullet$};
\node at  (27,-17){$\bullet$};
\node at  (33,-19){$\psi(T)$};
\end{tikzpicture}
\end{center}
\caption{An example of case b2).\label{step2:b2}}
\end{figure}

\item[b2)]{\bf The rev-node of $T'$ has  elder siblings and the closest elder sibling has no children.}
As in case b1), let $P$ be the unique path in $T'$ from the second node $v$ in the left arm  to the parent $u$ of the rev-node. Then, each node in $P$ has at most one branch, which must be a path, to the right of $P$. In particular, there is no any branch of $v$ to the right of $P$. Now define $\psi(T)$ to be the tree obtained from $T'$ by cutting the rightmost branch of $u$ and attaching it to $v$ as rightmost branch. See  Fig.~\ref{step2:b2} for an example in this case. It is routine to check that $\rev(\psi(T))=\rev(T')+1=\rev(T)$. 

\item[b3)] {\bf The rev-node of $T'$ has  elder siblings and the closest elder sibling has  children.}
Let $u$ be the parent of the rev-node in $T'$. Let $P$ be the branch  of the subtree at $u$ that contains the rev-node of $T'$. Then, $P$ must be a path. Let $B$ be the third branch (from right to left) of the subtree at $u$. Now define $\psi(T)$ to be the tree obtained from $T'$ by cutting the path $P$ and attaching it to the penultimate node on the right arm of $B$. See  Fig.~\ref{step2:b3} for an example in this case. The reader is invited  to check that $\rev(\psi(T))=\rev(T')+1=\rev(T)$.

 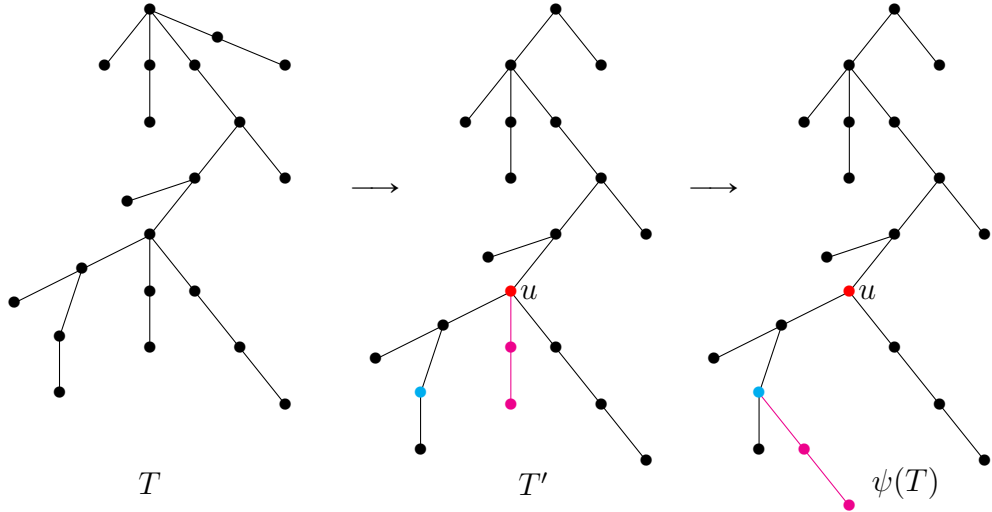
\begin{figure}
\begin{center}
\begin{tikzpicture}[scale=0.3]
\draw[-](0,0) to (-2,-2.5);\draw[-](0,0) to (0,-5);\draw[-](0,0) to (6,-7.5);\draw[-](0,0) to (6,-2.5);
\node at  (0,0){$\bullet$};\node at  (-2,-2.5){$\bullet$};\node at  (0,-2.5){$\bullet$};\node at  (0,-5){$\bullet$};\node at  (2,-2.5){$\bullet$};\node at  (4,-5){$\bullet$};\node at  (6,-7.5){$\bullet$};
\node at  (3,-1.25){$\bullet$};\node at  (6,-2.5){$\bullet$};
\draw[-](4,-5) to (0,-10);
\draw[-](0,-10) to (6,-17.5);\draw[-](0,-10) to (0,-15);
\draw[-](0,-10) to (-6,-13);\draw[-](-3,-11.5) to (-4,-14.5);\draw[-](2,-7.5) to (-1,-8.5);
\node at  (0,-10){$\bullet$};
\node at  (2,-7.5){$\bullet$};\node at  (-1,-8.5){$\bullet$};
\node at  (2,-12.5){$\bullet$};\node at  (4,-15){$\bullet$};\node at  (6,-17.5){$\bullet$};
\node at  (0,-12.5){$\bullet$};\node at  (0,-15){$\bullet$};

\draw[-](-4,-14.5) to (-4,-17);
\node at  (-3,-11.5){$\bullet$};
\node at  (-6,-13){$\bullet$};
\node at  (-4,-14.5){$\bullet$};
\node at  (-4,-17){$\bullet$};
\node at  (0,-21){$T$};
\node at  (10,-8){$\longrightarrow$};

\draw[-](16,-2.5) to (14,-5);\draw[-](16,-2.5) to (16,-7.5);\draw[-](16,-2.5) to (22,-10);
\draw[-](16,-2.5) to (18,0);\draw[-](20,-2.5) to (18,0);
\draw[-](20,-7.5) to (16,-12.5);\draw[-](16,-12.5) to (22,-20);\draw[magenta](16,-12.5) to (16,-17.5);
\draw[-](16,-12.5) to (10,-15.5);\draw[-](13,-14) to (12,-17);\draw[-](18,-10) to (15,-11);
\node at  (18,0){$\bullet$};\node at  (20,-2.5){$\bullet$};
\node at  (16,-2.5){$\bullet$};\node at  (14,-5){$\bullet$};\node at  (16,-5){$\bullet$};\node at  (16,-7.5){$\bullet$};\node at  (18,-5){$\bullet$};\node at  (20,-7.5){$\bullet$};\node at  (22,-10){$\bullet$};

\node at  (16,-12.5){\red{$\bullet$}};\node at  (16.8,-12.5){$u$};
\node at  (18,-10){$\bullet$};\node at  (15,-11){$\bullet$};
\node at  (18,-15){$\bullet$};\node at  (20,-17.5){$\bullet$};\node at  (22,-20){$\bullet$};
\magenta{\node at  (16,-15){$\bullet$};\node at  (16,-17.5){$\bullet$};}
\draw[-](12,-17) to (12,-19.5);
\node at  (13,-14){$\bullet$};
\node at  (10,-15.5){$\bullet$};
\node at  (12,-17){\cyan{$\bullet$}};
\node at  (12,-19.5){$\bullet$};
\node at  (17,-21){$T'$};
\node at  (25,-8){$\longrightarrow$};

\draw[-](31,-2.5) to (29,-5);\draw[-](31,-2.5) to (31,-7.5);\draw[-](31,-2.5) to (37,-10);

\draw[-](31,-2.5) to (33,0);\draw[-](35,-2.5) to (33,0);
\draw[-](35,-7.5) to (31,-12.5);
\draw[-](31,-12.5) to (25,-15.5);\draw[-](28,-14) to (27,-17);\draw[-](33,-10) to (30,-11);
\node at  (33,0){$\bullet$};\node at  (35,-2.5){$\bullet$};
\node at  (31,-2.5){$\bullet$};\node at  (29,-5){$\bullet$};\node at  (31,-5){$\bullet$};\node at  (31,-7.5){$\bullet$};\node at  (33,-5){$\bullet$};\node at  (35,-7.5){$\bullet$};\node at  (37,-10){$\bullet$};

\draw[-](31,-12.5) to (37,-20);\draw[magenta](27,-17) to (31,-22);
\magenta{\node at  (29,-19.5){$\bullet$};\node at  (31,-22){$\bullet$};}
\node at  (33,-15){$\bullet$};\node at  (35,-17.5){$\bullet$};\node at  (37,-20){$\bullet$};
\node at  (33,-10){$\bullet$};\node at  (30,-11){$\bullet$};
\draw[-](27,-17) to (27,-19.5);
\node at  (28,-14){$\bullet$};\node at  (25,-15.5){$\bullet$};
\node at  (27,-17){\cyan{$\bullet$}};
\node at  (27,-19.5){$\bullet$};
\node at  (31,-12.5){\red{$\bullet$}};\node at  (31.8,-12.5){$u$};
\node at  (33.5,-21){$\psi(T)$};
\end{tikzpicture}
\end{center}
\caption{An example of case b3).\label{step2:b3}}
\end{figure}
\end{itemize}
\end{itemize}

It is clear from the above construction that the map $\psi:\P_n^{(k,2)}\rightarrow\P_n^{(k+1,1)}$ preserves the number of leaves. It remains to show that $\psi$ is a bijection. We observe that each of the five cases in the construction of $\psi$ is reversible
and so we only need to figure out which case is involved for each specified tree. Given $T\in\P_n^{(k+1,1)}$, let us decompose $T$ as in the right tree in Fig.~\ref{step1}. If $\beta_0$ contains at least two nodes, then we apply the reverse operation of step (i) to retrieve $\psi^{-1}(T)$ directly. Otherwise, $\beta_0$ contains only one  node and we can  retrieve $T'$ and then  $\psi^{-1}(T)$ as follows. 

Suppose that $v$ is the second node in the left arm of $T$. If $v$ is the rev-node, then $T'=T$. This  corresponds to step (ii) case a1). Otherwise, $v$ is not the rev-node and let $P$ be the path from $v$ to the parent $u$ of the rev-node of $T$. For the sake of convenience, each extra branch (to the right of $P$) from a certain node $x$ on $P$ is called the ``hanging branch'' of $x$, and denoted as $H_x$ (note that $H_x$ must be a path). When such a hanging branch does not exist, we simply write $H_x=\emptyset$. So by the definition of $P$, the node $u$ (the parent of the rev-node) must have $H_u\neq\emptyset$.   We can retrieve $T'$ in the following way:
\begin{itemize}
\item If the rev-node has no children and each node in $P$ other than $u$ has no hanging branch, then apply the reverse  operation of step (ii) case a2) to get $T'$. In fact, $T'$ is obtained from $T$ by cutting  $H_u$ from $u$ and attaching it to $v$ so that it becomes the hanging branch for $v$. 
\item If the rev-node has no children and $H_v\neq\emptyset$, then apply the reverse  operation of step (ii) case b2) to get $T'$. In this case, let $T'$ be the tree obtained from $T$ by cutting $H_v$ from $v$ and attaching it to $u$ so that it becomes the new hanging branch for $u$.
\item If the rev-node has no children and it is not in the above two situations, then apply the reverse  operation of step (ii) case b3) to get $T'$.  Suppose that $x$ is the closest node to $u$ on $P$ with the property that $H_x\neq\emptyset$. We cut $H_u$ and insert it to $x$ as a middle branch between $P$ and $H_x$. This new tree is then taken to be $T'$.
\item If the rev-node has one child, then apply the reverse  operation of step (ii) case b1) to get $T'$. Suppose that $r$ is the rev-node of $T$ and $s$ is the child of $r$. We cut $H_u$ from $u$ and attach it to $r$ so that it becomes the hanging branch for $r$ and the node $s$ becomes the new rev-node. For the remaining nodes on $P$, we ``slide down'' their hanging branches by one edge. Namely, if node $y$ succeeds $x$ on $P$, then we cut $H_x$ and attach it to $y$ so that it becomes the new hanging branch for $y$, and $H_y$ is slid down to $y$'s successor if any, and so on and so forth. The new tree obtained this way is set to be $T'$.
\end{itemize}
Since the above cases are disjoint and exhaust all possible $T\in\P_n^{(k+1,1)}$ whose $\beta_0$ contains only one  node, the map $\psi$ is a bijection. 
\end{proof}

We also need the following bijection for the statistic ``$\run$'' analogous to Lemma~\ref{lem:rev} for the statistic ``$\rev$''.

\begin{lemma}\label{lem:run}
Fix integers $n,k\geq1$. There exists a bijection $\eta:\P_n^{(k,2)}\rightarrow\P_n^{(k+1,1)}$ preserving the degree sequences and the triple of statistics $(\run,\leaf,\bran)$. 
\end{lemma}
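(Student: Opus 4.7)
The plan is to mimic the two-step approach of the proof of Lemma~\ref{lem:rev}, with modifications to ensure the preservation of the degree sequence and $\bran$ throughout. Given $T\in\P_n^{(k,2)}$ with left arm $a_0,a_1,\ldots,a_k$ and right arm $a_0,r_1,r_2$, I decompose $T$ by deleting the left-arm and right-arm edges into: the root-fragment $T_0$ carrying the non-arm children of the root, the left-arm subtrees $\beta_1,\ldots,\beta_k$, the right-arm subtree $\beta_0$ rooted at $r_1$ with $r_2$ removed, and the singleton $\{r_2\}$. The \emph{basic shift} reassembles these pieces with a new left arm $a_0,r_1,a_1,\ldots,a_k$ of length $k+1$ and a new rightmost-leaf-of-root equal to $r_2$. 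A direct check shows that this basic shift preserves the degree of every node: in particular, $r_1$ gains the new left-arm child $a_1$ while ``losing'' its old right-arm child $r_2$, balancing to the same degree, and $a_k$ stays a leaf as the new bottom of the left arm; $\bran$ and $\leaf$ are then immediate consequences.

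The basic shift does not automatically preserve $\run$: the new left arm may stretch the leftmost chain of the ``broom'' at the last branching node $v$ of $T$, so the chain-length multiset at $v$ changes. To remedy this, I plan to insert a correction step that rearranges subtrees \emph{within} the broom at $v$ so that its chain-length multiset matches that of the original broom in $T$. Crucially, this rearrangement can be done without disturbing the degree sequence, since inside the broom every non-root node has degree at most $1$ and hence chains can be permuted freely as long as $v$'s own degree is kept fixed.

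The proof is then completed by (i)~describing the correction step precisely in each case, based on where $v$ sits relative to the right arm (namely $v=r_1$, $v$ a proper ancestor of $r_1$, or $v$ a node inside one of the side subtrees); (ii)~verifying that the correction preserves both $\run$ and the degree sequence; and (iii)~defining $\eta^{-1}$ by reversing the correction and the basic shift and checking $\eta\circ\eta^{-1}=\eta^{-1}\circ\eta=\mathrm{id}$. The preservation statement for the multiset $(\run,\leaf,\bran)$ then follows from these checks.

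The main obstacle will be the simultaneous preservation of $\run$ and the degree sequence in the correction step. The intertwined constraints---one must arrange that the broom's chain-length multiset stays the same while keeping the new left arm of length exactly $k+1$ and the new right arm of length exactly $1$---force a careful case analysis, particularly when $v=r_1$ and the basic shift has altered the broom's leftmost chain directly. This will be analogous in spirit to, but more constrained than, Step~(ii) of the proof of Lemma~\ref{lem:rev}, since we now carry degree data along with the shift.
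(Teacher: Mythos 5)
Your proposal goes a genuinely different route from the paper, but it has a real gap: the entire difficulty of the lemma is concentrated in the ``correction step,'' and you do not construct it --- you only announce that it will be described ``precisely in each case'' and verified later. Your basic shift is fine (it is essentially Step~(i) of the proof of Lemma~\ref{lem:rev}, and your degree-by-degree check that the root, $r_1$ and $a_1$ all keep their degrees is correct), but everything after that is a plan rather than a proof. Moreover, the picture underlying the plan is not accurate: the nodes counted by $\run$ are \emph{all} non-leaf nodes occurring after the last branching node $v$ in preorder, and these need not lie in the subtree of $v$ --- in the tree of Fig.~\ref{plan:preorder} the last branching node is $12$, yet the node $17$, which contributes to $\run$, hangs off the root. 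So the ``broom at $v$'' is not a local gadget whose chains ``can be permuted freely''; the relevant paths are scattered among younger branches of several ancestors of $v$, and any rearrangement must simultaneously keep the new left arm at length exactly $k+1$, the right arm at length $1$, the degree sequence, and invertibility. This is precisely the kind of multi-case surgery that the proof of Lemma~\ref{lem:rev} needs five subcases to handle even \emph{without} the degree-sequence constraint, so asserting that a more constrained version exists is not a proof.

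For contrast, the paper sidesteps all of this: it sets $\eta=\tau^{-1}\circ f\circ\tau$, where $\tau$ is the preorder bijection to Dyck paths and $f:\D_n^{(k,2)}\rightarrow\D_n^{(k+1,1)}$ is the Li--Lin bijection, already known to preserve the composition type and the triple $(\vrun,\iest,\peak)$. Since the composition type of $\tau(T)$ is the preorder sequence of internal-node degrees of $T$ (hence determines the degree sequence), and since $(\larm,\rarm,\run,\bran,\leaf)$ transfer to $(\hrun,\ret,\vrun,\iest,n+1-\peak)$ under $\tau$ by Lemma~\ref{lem:tau}, the statement follows with no new case analysis. If you want to salvage your direct approach, the honest way forward is to carry out the correction step in full; alternatively, you could conjugate the basic shift by $\tau$ and compare it with $f$, but as written the proposal does not establish the lemma.
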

\begin{proof}
For a Dyck path $D\in\D_n$, denote by $\iest(D)$  the number of initial consecutive east  steps  of $D$ and by $\peak(D)$ the number of peaks of $D$, where a peak of $D$ is an east step followed immediately by a north step. For each $T\in\P_n$, the bijection $\tau$ satisfies $\bran(T)=\iest(\tau(T))$ and $\leaf(T)=n+1-\peak(\tau(T))$. If we introduce
$$
\D_n^{(k,l)}:=\{D\in\D_n: \iest(D)\geq2,\hrun(D)=k, \ret(D)=l\},
$$
 then Li and Lin~\cite{LL} have constructed a  bijection $f:\D_n^{(k,2)}\rightarrow\D_n^{(k+1,1)}$ that preserves the composition-type and the triple  of statistics $(\vrun,\iest,\peak)$. It then follows from Lemma~\ref{lem:tau} that the bijection $\eta=\tau^{-1}\circ f\circ\tau$ has the required properties. 
\end{proof}

\begin{remark}
Small examples (e.g., $n=6,k=1$) show that Lemma~\ref{lem:rev} for the statistic ``$\rev$'' can not be refined further either by the number of branches or by the degree sequences.
\end{remark}

Now we are ready to construct recursively $\Psi$ to prove Theorem~\ref{thm:psi}. 

\begin{proof}[{\bf Proof of Theorem~\ref{thm:psi}}] We will construct $\Psi$ recursively with the help of Lemmas~\ref{lem:rev} and~\ref{lem:run}. Initially, the only tree in $\P_1$ is fixed under $\Psi$. For $n\geq2$, suppose that the map $\Psi$ is defined for $\P_{n-1}$. We perform the following three main cases to define $\Psi$ for each tree $T\in\P_n$.
\begin{enumerate}
\item $\bran(T)=1$. Let $T^*$ be the only branch of $T$. Define $\Psi(T)$ to be the tree with one brach $\Psi(T^*)$. Clearly, the triple of statistics $(\leaf,\larm,\rarm)$ is preserved and $\rev(T)=\rev(T^*)=\run(\Psi(T^*))=\run(\Psi(T))$ in this case.
\item $\bran(T)\geq2$ and $\rarm(T)\geq2$. Let $B$ be the subtree of $T$ rooted at  the penultimate node on the right arm of $T$. We further distinguish two subcases according to $B$:
\begin{itemize}
\item $\bran(B)\geq2$. Define $\Psi(T)$ to be the tree obtained  by replacing $B$ in $T$ by $\Psi(B)$.
Clearly, the triple of statistics $(\leaf,\larm,\rarm)$ is preserved and $\rev(T)=\rev(B)=\run(\Psi(B))=\run(\Psi(T))$ in this case. 
\item $\bran(B)=1$. Let $T^*$ be the tree obtained from $T$ by deleting the last node in the right arm. Define $\Psi(T)$ to be the tree $\Psi(T^*)$ after adding  to the last node in its right arm a child. Again, the triple of statistics $(\leaf,\larm,\rarm)$ is preserved and $\rev(T)=\rev(T^*)+1=\run(\Psi(T^*))+1=\run(\Psi(T))$ in this case. 
\end{itemize}
\item $\bran(T)\geq2$ and $\rarm(T)=1$. Suppose that $\larm(T)=k+1$ for some $k\geq0$. We further distinguish two subcases according to the value $k$:
\begin{itemize}
\item $k=0$. In this case, let $T^*$ be the tree obtained from $T$ by deleting the eldest child of its root. Then define $\Psi(T)$ to be the tree obtained from $\Psi(T^*)$ by attaching a child to its root as eldest child. Clearly, the triple of statistics $(\leaf,\larm,\rarm)$ is preserved and 
$\rev(T)=\rev(T^*)=\run(\Psi(T^*))=\run(\Psi(T))$ in this case.
\item $k\geq1$. In this case, $T$ is a tree in $\P_n^{(k+1,1)}$ and so $\psi^{-1}(T)\in\P_n^{(k,2)}$, which is a tree in case~(2) above. Let $T'=\Psi(\psi^{-1}(T))\in\P_n^{(k,2)}$ be obtained in case~(2) above. Then define $\Psi(T)=\eta(T')\in\P_n^{(k+1,1)}$. By Lemmas~\ref{lem:rev} and~\ref{lem:run}, we see that the triple of statistics $(\leaf,\larm,\rarm)$ is preserved and $\rev(T)=\rev(\psi^{-1}(T))=\run(T')=\run(\Psi(T))$ in this case. 
\end{itemize} 
\end{enumerate}

It is easy to check that $\Psi$ is a bijection by induction, which completes the proof of Theorem~\ref{thm:psi}.
\end{proof}

\subsection{Generalized to rooted  labeled trees}
In this section, we show that the involution $\vartheta$ on binary trees and $\tilde\vartheta$ on plane trees can be generalized to right increasing binary trees and rooted labeled trees, respectively. 

A {\em right increasing binary tree} on $[n]$ is a binary tree labeled by $[n]$ such that the label of each right child is greater than its parent. Let $\mathcal{RB}_n$ be the set of all right increasing binary trees on $[n]$. 
A {\em rooted labeled tree} on $[n]_0:=\{0,1,2,\ldots,n\}$ is a plane tree labeled by $[n]_0$ such that the root is labeled $0$ and the labels of children of any node is increasing from left to right. 
Let $\mathcal{LT}_n$ be the set of all rooted labeled trees on $[n]_0$. 
The bijection $\varphi: \P_n\rightarrow\B_n$ defined in Section~\ref{sec:2} can be extended naturally to $\varphi: \mathcal{LT}_n\rightarrow\mathcal{RB}_n$. See Fig.~\ref{rl:tree} for an example of $\varphi$.
 Since it is well known~\cite{Yan} that the cardinality of $\mathcal{LT}_n$ is $(n+1)^{n-1}$, then so is $\mathcal{RB}_n$.

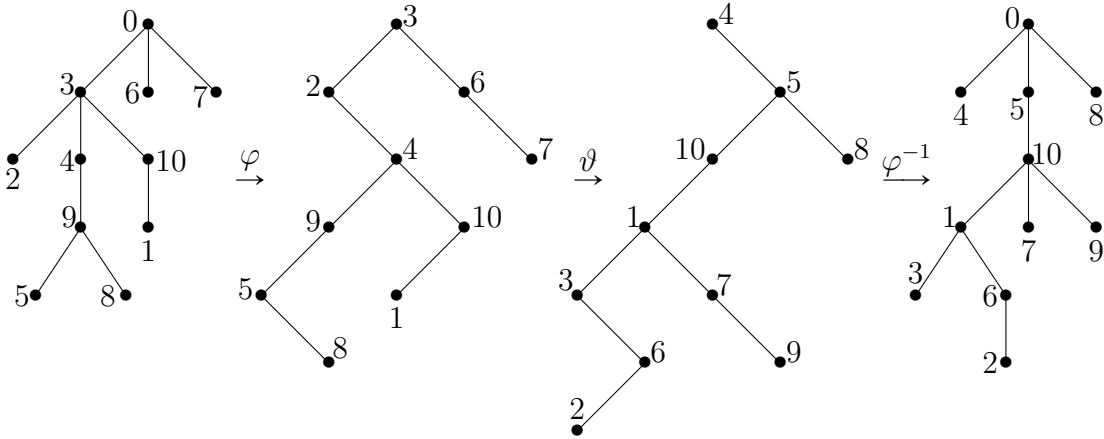
\begin{figure}
\begin{center}
\begin{tikzpicture}[scale=0.3]
\draw[-](0,9) to (-3,6);\draw[-](0,9) to (0,6);\draw[-](0,9) to (3,6);
\draw[-](-3,6) to (-6,3);\draw[-](-3,6) to (0,3);\draw[-](0,3) to (0,0);
\draw[-](-3,6) to (-3,3);\draw[-](-3,0) to (-3,3);
\draw[-](-3,0) to (-5,-3);\draw[-](-3,0) to (-1,-3);
\node at  (0,9){$\bullet$};\node at  (-0.8,9.2){$0$};
\node at  (-3,6){$\bullet$};\node at  (-3.6,6.4){$3$};
\node at  (0,6){$\bullet$};\node at  (-0.7,6){$6$};
\node at  (3,6){$\bullet$};\node at  (2.3,5.8){$7$};
\node at  (-6,3){$\bullet$};\node at  (-6,2.2){$2$};
\node at  (0,3){$\bullet$};\node at  (1,3){$10$};
\node at  (-3,3){$\bullet$};\node at  (-3.6,3){$4$};
\node at  (-3,0){$\bullet$};
\node at  (-5,-3){$\bullet$};\node at  (-5.6,-3){$5$};
\node at  (-1,-3){$\bullet$};\node at  (-1.8,-3){$8$};
\node at  (0,0){$\bullet$};\node at  (0,-1){$1$};
\node at  (-3.5,0.5){$9$};

\node at  (4.5,2){$\rightarrow$};\node at  (4.5,3){$\varphi$};

\draw[-](11,9) to (8,6);
\draw[-](11,9) to (17,3);\draw[-](8,6) to (14,0);
\draw[-](11,-3) to (14,0);\draw[-](11,3) to (5,-3);\draw[-](8,-6) to (5,-3);
\node at  (11,9){$\bullet$};\node at  (11.6,9.4){$3$};
\node at  (8,6){$\bullet$};\node at  (7.3,6.3){$2$};
\node at  (14,6){$\bullet$};\node at  (14.6,6.5){$6$};
\node at  (17,3){$\bullet$};\node at  (17.6,3.5){$7$};
\node at  (11,3){$\bullet$};\node at  (11.6,3.5){$4$};
\node at  (14,0){$\bullet$};\node at  (15,0.5){$10$};
\node at  (11,-3){$\bullet$};\node at  (11,-4){$1$};
\node at  (8,0){$\bullet$};\node at  (7.3,0.3){$9$};
\node at  (5,-3){$\bullet$};\node at  (4.3,-2.7){$5$};
\node at  (8,-6){$\bullet$};\node at  (8.5,-5.3){$8$};

\node at  (19.5,2){$\rightarrow$};\node at  (19.5,3){$\vartheta$};

\draw[-](25,9) to (31,3);\draw[-](28,6) to (19,-3);\draw[-](22,0) to (28,-6);
\draw[-](19,-3) to (22,-6);\draw[-](22,-6) to (19,-9);

\node at  (19,-9){$\bullet$};\node at  (19,-8){$2$};
\node at  (22,-6){$\bullet$};\node at  (22.6,-5.5){$6$};
\node at  (25,9){$\bullet$};\node at  (25.6,9.5){$4$};
\node at  (28,6){$\bullet$};\node at  (28.6,6.5){$5$};
\node at  (31,3){$\bullet$};\node at  (31.6,3.5){$8$};
\node at  (25,3){$\bullet$};\node at  (24,3.5){$10$};
\node at  (22,0){$\bullet$};\node at  (21.5,0.5){$1$};
\node at  (19,-3){$\bullet$};\node at  (18.5,-2.3){$3$};
\node at  (25,-3){$\bullet$};\node at  (25.5,-2.5){$7$};
\node at  (28,-6){$\bullet$};\node at  (28.6,-5.5){$9$};

\node at  (33.6,2){$\longrightarrow$};\node at  (33.6,3){$\varphi^{-1}$};

\draw[-](39,9) to (36,6);\draw[-](39,9) to (42,6);\draw[-](39,9) to (39,6);
\draw[-](39,0) to (39,6);\draw[-](39,3) to (36,0);\draw[-](39,3) to (42,0);

\draw[-](36,0) to (34,-3);\draw[-](36,0) to (38,-3);\draw[-](38,-6) to (38,-3);

\node at  (34,-3){$\bullet$};\node at  (34,-2){$3$};
\node at  (38,-3){$\bullet$};\node at  (37.3,-3){$6$};
\node at  (38,-6){$\bullet$};\node at  (37.3,-6){$2$};
\node at  (39,9){$\bullet$};\node at  (38.3,9.3){$0$};
\node at  (36,6){$\bullet$};\node at  (36,5){$4$};
\node at  (42,6){$\bullet$};\node at  (42,5){$8$};
\node at  (39,6){$\bullet$};\node at  (38.5,5.3){$5$};
\node at  (39,0){$\bullet$};\node at  (39,-1){$7$};
\node at  (39,3){$\bullet$};\node at  (39.8,3.3){$10$};
\node at  (36,0){$\bullet$};\node at  (35.5,0.5){$1$};
\node at  (42,0){$\bullet$};\node at  (42,-1){$9$};

\end{tikzpicture}
\end{center}
\caption{An example of  the involution $\tilde\vartheta=\varphi\circ\vartheta\circ\varphi^{-1}$.\label{rl:tree}}
\end{figure}

 We observe that our involution $\vartheta: \B_n\rightarrow\B_n$ can be generalized to $\mathcal{RB}_n$ as follows. Given $T\in\mathcal{RB}_n$ whose underlying  binary tree is $B$, define $\vartheta(T)$ to be the right increasing binary tree whose underlying binary tree is $\vartheta(B)$ and a node is labeled by $i$ in $\vartheta(T)$ iff it is labeled by $n+1-i$ in $T$. See Fig.~\ref{rl:tree} for an example of $\vartheta$.
 
 Since the labeling is irrelevant for the seven tree statistics $(\spi,\rspi,\lrb,\larm,\rarm,\rev,\run)$, they can be extended naturally either to right increasing binary trees or to rooted labeled trees.
 Exactly the same discussions as in the proof of Theorem~\ref{thm:new} results in the following analogue. 
 \begin{theorem}\label{thm:newt}
The involution $\vartheta: \mathcal{RB}_n\rightarrow\mathcal{RB}_n$ preserves  the right chain sequences and switches   the statistics ``$\spi$'' and ``$\rspi$''. 
\end{theorem}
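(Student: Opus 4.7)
The plan is to reduce Theorem~\ref{thm:newt} to its unlabeled counterpart Theorem~\ref{thm:new}, so the only real work is checking that the labeled version of $\vartheta$ is well-defined on $\mathcal{RB}_n$; once this is done, the statements about $\spi$, $\rspi$, and right chain sequences follow for free because these statistics do not see the labels.

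First I would verify that $\vartheta(T)\in\mathcal{RB}_n$ whenever $T\in\mathcal{RB}_n$. Write $B$ for the underlying (unlabeled) binary tree of $T$ and denote by $v_1,\dots,v_n$ the nodes of $B$ listed in preorder; let $\ell_i$ be the label carried by $v_i$ in $T$, so $\vartheta(T)$ carries the label $n+1-\ell_i$ on the same node $v_i$ (viewed inside $\vartheta(B)$). A right edge of $\vartheta(B)$ has the form $v_j \to v_i$ with $i<j$, and by the construction of $\vartheta: \B_n\to\B_n$ this edge arises precisely from a right edge $v_i\to v_j$ of $B$. Since $T$ is right-increasing, such an edge in $B$ satisfies $\ell_j>\ell_i$, hence in $\vartheta(T)$ the child $v_i$ carries label $n+1-\ell_i$, the parent $v_j$ carries label $n+1-\ell_j$, and the strict inequality $n+1-\ell_i>n+1-\ell_j$ shows that $\vartheta(T)$ satisfies the right-increasing condition.

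Next I would read off the three desired conclusions. The underlying binary tree of $\vartheta(T)$ is by definition $\vartheta(B)$, and Theorem~\ref{thm:new} already gives that $B$ and $\vartheta(B)$ share the same right chain sequence and that $\spi(B)=\rspi(\vartheta(B))$ together with $\rspi(B)=\spi(\vartheta(B))$. Because the right chain sequence of a right increasing binary tree, as well as $\spi$ and $\rspi$, depend only on the shape, these equalities transfer verbatim from $B$, $\vartheta(B)$ to $T$, $\vartheta(T)$. Finally, $\vartheta^2$ is the identity on $\mathcal{RB}_n$: on the underlying shape this is Theorem~\ref{thm:new}, and on the labels the involution $i\mapsto n+1-i$ is obviously its own inverse, so the two involutions commute and compose to the identity.

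The main (and essentially only) obstacle is the first step: one must be careful that label reversal $i\mapsto n+1-i$ is the correct choice to pair with the shape involution $\vartheta$. The point is that $\vartheta$ reverses the orientation of every right edge of $B$ while keeping the pair of endpoints fixed, so any condition requiring the child of a right edge to dominate the parent must be flipped, and this flip is realized exactly by $i\mapsto n+1-i$. Once this compatibility is observed, the rest of the proof is an immediate translation of Theorem~\ref{thm:new}.
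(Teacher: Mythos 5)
Your proof is correct and follows essentially the same route as the paper, which simply notes that ``exactly the same discussions as in the proof of Theorem~\ref{thm:new}'' apply; the statistics and right chain sequences depend only on the underlying shape, so everything reduces to Theorem~\ref{thm:new}. Your explicit check that the label reversal $i\mapsto n+1-i$ restores the right-increasing condition on the reversed right edges is the one detail the paper leaves implicit, and you have it right.
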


Introduce the refinement $T_{n,k}$ of the tree function $(n+1)^{n-1}$ by the exponential generating function 
\begin{equation*}
\sum_{n\geq0}\sum_{k=1}^nT_{n,k}t^k\frac{x^n}{n!}=\biggl(1-t\sum_{n\geq1}(n-1)^{n-1}\frac{x^n}{n!}\biggr)^{-1}.
\end{equation*}
Among several other combinatorial interpretations found in~\cite{DG,Du,LL}, it was known that $T_{n,k}$ enumerates rooted labeled trees $T\in\mathcal{LT}_n$ with $\larm(T)=k$ (or $\rarm(T)=k$). As one application of Theorem~\ref{thm:newt}, we have more interpretations for the triangle $T_{n,k}$ summarized  
in the following result, whose proof is the same as that for plane trees and will be omitted.  
\begin{proposition}\label{sym:revt}
 We have the following results concerning the bijection $\varphi$ and the involution $\tilde\vartheta:=\varphi\circ\vartheta\circ\varphi^{-1}$ (see Fig.~\ref{rl:tree} for an example of $\tilde\vartheta$).
\begin{itemize}
\item[(i)] For any $T\in\mathcal{LT}_n$, we have 
$$\larm(T)=\spi(\varphi(T)),\quad\rev(T)=\rspi(\varphi(T))\quad \text{and}\quad \rarm(T)=\lrb(\varphi(T))+1.
$$ 
\item[(ii)]
The involution $\tilde\vartheta:\mathcal{LT}_n\rightarrow\mathcal{LT}_n$ preserves the degree sequences and the statistic ``$\rarm$'' but switches the statistics ``$\larm$'' and ``$\rev$''. 
\item[(iii)] Each statistic in $\{\spi,\rspi,\rev,\lrb\}$ is a new tree statistic that interprets the  triangle $T_{n,k}$. 
\end{itemize}
\end{proposition}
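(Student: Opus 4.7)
The plan is to push the plane-tree arguments of Section~\ref{sec:3} through to the labeled setting by observing that every statistic appearing in the proposition depends only on the underlying (unlabeled) tree structure.

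For part~(i), I would argue that the map $\varphi\colon\mathcal{LT}_n\to\mathcal{RB}_n$ is, at the level of underlying rooted plane/binary trees, identical to the bijection $\varphi\colon\P_n\to\B_n$; the labels are merely carried along. Since each of $\larm,\rev,\rarm$ on a rooted labeled tree (resp.\ $\spi,\rspi,\lrb$ on a right increasing binary tree) is defined purely from the shape, the three identities are literally the same equalities verified in Lemma~\ref{lem:var}. I would simply remark that the proof of Lemma~\ref{lem:var} inspects only edges and visiting orders and hence applies verbatim. In the same stroke I would also note that the degree sequence of $T\in\mathcal{LT}_n$ equals the right chain sequence of $\varphi(T)\in\mathcal{RB}_n$, since this fact from Lemma~\ref{lem:var} is again purely structural.

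For part~(ii), I would combine Theorem~\ref{thm:newt} with part~(i). Theorem~\ref{thm:newt} tells us that $\vartheta\colon\mathcal{RB}_n\to\mathcal{RB}_n$ preserves right chain sequences and swaps $\spi$ and $\rspi$. The plane-tree lemma preceding Proposition~\ref{sym:rev} (namely $\lrb(B)=\lrb(\vartheta(B))$) is again a statement about right edges and has no dependence on labels, so it extends to $\mathcal{RB}_n$ with the same proof. Conjugating by $\varphi$ and invoking the dictionary established in (i), I immediately obtain that $\tilde\vartheta=\varphi\circ\vartheta\circ\varphi^{-1}$ preserves degree sequences (right chain sequences $\leftrightarrow$ degree sequences under $\varphi$), preserves $\rarm$ (because $\rarm-1$ corresponds to $\lrb$, which is preserved by $\vartheta$), and swaps $\larm$ with $\rev$ (because $\vartheta$ swaps $\spi$ with $\rspi$).

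For part~(iii), I would invoke the classical fact cited just before the proposition, that $T_{n,k}$ counts rooted labeled trees $T\in\mathcal{LT}_n$ with $\larm(T)=k$ (equivalently with $\rarm(T)=k$). Part~(i) then transports this interpretation along $\varphi$ to say that $T_{n,k}$ counts right increasing binary trees in $\mathcal{RB}_n$ with $\spi=k$, and with $\lrb=k-1$ (a shift that still means $\lrb$ realizes the same triangle). Part~(ii) combined with the $\larm$-interpretation shows that $\rev$ on $\mathcal{LT}_n$ is equidistributed with $\larm$, and transporting this via $\varphi$ gives the $\rspi$ interpretation.

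Since labels play no role, no genuine obstacle arises; the only item that requires a token of care is to notice that every lemma cited from Section~\ref{sec:2}--Section~\ref{sec:3} (namely Lemma~\ref{lem:var}, the $\lrb$-preservation lemma, and Theorem~\ref{thm:new}) has a proof that inspects only the shape of the tree, so each lemma lifts mechanically to $\mathcal{LT}_n$ and $\mathcal{RB}_n$. I would therefore keep the write-up short, explicitly pointing to this invariance under labels rather than re-proving the identities.
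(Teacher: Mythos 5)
Your proposal is correct and is exactly the route the paper takes: the authors omit the proof precisely because, as you observe, every statistic involved is label-independent, so Lemma~\ref{lem:var}, the $\lrb$-preservation lemma, and Theorem~\ref{thm:newt} lift verbatim from $\P_n$ and $\B_n$ to $\mathcal{LT}_n$ and $\mathcal{RB}_n$. Your added remarks on the $\lrb$ index shift and on deducing the $\rev$ and $\rspi$ interpretations from parts (i) and (ii) fill in the omitted details faithfully.
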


Regarding the counterpart of Theorem~\ref{thm:psi} for $\mathcal{LT}_n$, we have the following observations. 
\begin{itemize}
\item The statistic ``$\rev$'' is not equidistributed with ``$\run$'' over $\mathcal{LT}_4$.
\item The triple $(\larm,\rarm,\rev)$ is not equidistributed with $(\rarm,\larm,\rev)$ over $\mathcal{LT}_4$.
\item Proposition~\ref{sym:revt}~(ii) implies that the pair $(\rarm,\rev)$ has the same distribution as $(\rarm,\larm)$ over $\mathcal{LT}_n$. As the pair $(\rarm,\larm)$ is symmetric by the mirror symmetry of rooted labeled trees, we see that $(\rarm,\rev)$ is symmetric over $\mathcal{LT}_n$. 
\end{itemize}

\section*{Acknowledgement}

The authors thank Joanna N. Chen and Jing Liu for their  discussions that leads to the finding of Theorem~\ref{thm:lsw}. The authors also wish to thank the anonymous referee for carefully reading the paper and providing insightful comments and suggestions.
This work was supported by the National Science Foundation of China grants 12322115, 12271301 and 12201641,  and the project of Qilu Young Scholars of Shandong University.


\begin{thebibliography}{99}

\bibitem{BC} Z.-W. Bai and R.X.F. Chen, Equidistribution of decompositions of 132-avoiding permutations and consequences, \href{https://arxiv.org/abs/2209.01961v1}{arXiv:2209.01961}.


\bibitem{Cai} Y. Cai and C. Yan, Counting with Borel's triangle, Discrete Math., {\bf342} (2019),  529--539.

\bibitem{Chen} W.Y.C. Chen and A.M. Fu, Context-free grammars for permutations and increasing trees, Adv. in Appl. Math., {\bf82} (2017), 58--82.

\bibitem{Chen2006} W.Y.C. Chen, E. Deutsch and S. Elizalde, Old and young leaves on plane trees, European J. Combin.,  27 (2006), 414--427.

\bibitem{CFu} X. Chen and S. Fu,  Two bijections on weakly increasing trees, Discrete Math., {\bf345} (2022), Article 112760, 8 pp.

\bibitem{Du} Z. Du, Some equidistributed statistics and applications to hook length formulas, preprint.

\bibitem{DG} R. Duarte and A. Guedes de Oliveira, The number of parking functions with center of a given length, Adv. in Appl. Math., 107 (2019), 125--143.

\bibitem{FS} {\em FindStat}: a database and search engine for combinatorial statistics and maps, \href{https://www.findstat.org/MapsDatabase/}{https://www.findstat.org}.

\bibitem{Fla} P. Flajolet, Combinatorial aspects of continued fractions, Discrete Math., {\bf32} (1980), 125--161.


\bibitem{FLW}S. Fu, Z. Lin and Y. Wang, A combinatorial bijection on di-sk trees, Electron. J. Combin., {\bf28} (2021), \#P4.48, 23 pp.

\bibitem{JR} M. Jani and R.G. Rieper, Continued fractions and Catalan problems, Electron. J. Combin., {\bf7} (2000), \#R45, 8pp. 

\bibitem{Kra} C. Krattenthaler, Permutations with restricted patterns and Dyck paths, Special issue in honor of Dominique Foata's 65th birthday, Adv. Appl. Math., {\bf27} (2001), 510--530.

\bibitem{KPP} A.G. Kuznetsov, I.M. Pak and  A.E. Postnikov, Increasing trees and alternating permutations, Russian Math. Surveys, {\bf49} (1994), 79--114.

\bibitem{LL} Y. Li and Z. Lin, A symmetry on parking functions via Dyck paths, Discrete Math., {\bf346} (2023), Article 113426, 9pp. 

\bibitem{LLWZ}Z. Lin, J. Liu, S. Wang and W.J.T. Zang, More bijective combinatorics of weakly increasing trees, preprint. 

\bibitem{LM} Z. Lin and J. Ma, A symmetry on weakly increasing trees and multiset Schett polynomials, \href{https://arxiv.org/abs/2104.10539}{arXiv:2104.10539}.

\bibitem{Lin20} Z. Lin, J. Ma, S.-M. Ma and Y. Zhou,  Weakly increasing trees on a multiset, Adv. in Appl. Math.,  {\bf 129} (2021), Article 102206, 29 pp. 



\bibitem{Re} A. Reifegerste, On the diagram of 132-avoiding permutations,
European J. Combin., {\bf24} (2003), 759--776.

\bibitem{St0} R.P. Stanley, {\it Enumerative Combinatorics}, vol. 1, Cambridge Stud. Adv. Math., vol. 49, Cambridge Univ. Press, Cambridge, 1997.

\bibitem{St1} R.P. Stanley, {\it Catalan numbers}, Cambridge University Press, New York, 2015.

\bibitem{Yan} C. Yan, {\it Parking functions}, in: Handbook of Enumerative Combinatorics CRC Press Book, 2014.

\end{thebibliography}
\end{document}